\documentclass[12pt,twoside,reqno]{amsart}
\usepackage[colorlinks=true,citecolor=blue]{hyperref}
\usepackage{mathptmx, amsmath, amssymb, amsfonts, amsthm, mathptmx, enumerate, color}
\usepackage{graphicx}
\usepackage{epstopdf}

\newtheorem{theorem}{Theorem}[section]
\newtheorem{corollary}{Corollary}[section]
\newtheorem{lemma}{Lemma}[section]

\theoremstyle{definition}

\newtheorem{remark}{Remark}[section]

\numberwithin{equation}{section}

\usepackage{nicematrix}
\usepackage{stmaryrd}
\SetSymbolFont{stmry}{bold}{U}{stmry}{m}{n}
\usepackage{caption}
\usepackage{subcaption}
\usepackage{tikz}
\usepackage{pgfplots}
\usepackage{bm}
\usepackage{mathrsfs}
\usepackage{soul}

\pgfplotsset{compat=1.15}
\definecolor{darkbrown}{rgb}{0.57, 0.40, 0.13}

\renewcommand{\L}{\mathfrak{L}}
\newcommand{\A} {\bm{A}}
\newcommand{\lin} {\bm{\ell}}
\newcommand{\B} {\bm{B}}
\newcommand{\E} {\bm{E}}
\newcommand  {\T}   {\mathscr T}
\newcommand  {\R}  {\mathbb R}
\newcommand  {\N}  {\mathbb N}
\newcommand{\norm}[1]    {\Vert #1 \Vert}

\begin{document}
\setcounter{page}{1}

\vspace*{1.0cm}
\title[A posteriori error estimates for variational inequalities of the second kind]
{A posteriori error estimates for variational inequalities of the second kind in an abstract framework}
\author[L.~Banz, M.~Schönauer, A.~Schröder]{Lothar Banz$^1$, Miriam Schönauer$^{1,*}$, Andreas Schröder$^1$}
\maketitle
\vspace*{-0.6cm}

\begin{center}
{\footnotesize {\it

$^1$Department of Mathematics, Paris Lodron University of Salzburg, 5020 Salzburg, Austria

}}\end{center}

\vskip 4mm {\small\noindent {\bf Abstract.}
In this paper, an abstract framework is introduced for the derivation of reliable a posteriori error estimators for variational inequalities of the second kind. Under certain conditions (local) efficiency of the error estimators can also be derived. The framework is based on the estimation of a residuum which results from an equivalent mixed formulation and from the replacement of the Lagrange multiplier by its (possibly post-processed) discrete counterpart.
The approach is applied to an idealized frictional contact problem and a Mosolov type model problem in an $hp$-finite element discretization setup. Several numerical experiments demonstrate the broad applicability of the theoretical findings.

\noindent {\bf Keywords.}
a posteriori error control; variational inequalities; $hp$-fem. 

\noindent {\bf 2020 Mathematics Subject Classification.}
65K15, 65N30, 65N50.

}

\renewcommand{\thefootnote}{}
\footnotetext{ $^*$Corresponding author.
\par
E-mail addresses: lothar.banz@plus.ac.at (L.~Banz), miriam.schoenauer@plus.ac.at (M.~Schönauer), andreas.schroeder@plus.ac.at (A.~Schröder).
\par
Received September XX, 2026; Accepted Month XX, 202X. }

\section{Introduction}

Variational inequalities of the second kind play an important role in mechanical engineering. They can, for instance, be used to model frictional contact problems, problems of elastoplacity and Bingham fluid problems, see e.g.~\cite{Duvaut1976,Glowinski1984, Han2013,Hlavacek1988a,Kikuchi1988a}. Typically, variational inequalities of the second kind are characterized by the presence of a non-differentiable functional $j$, which can be resolved by the introduction of a Lagrange multiplier in a mixed formulation that is equivalent to the variational inequality, see e.g.~\cite{Chouly2023,Glowinski1976a,Han1995,Haslinger2004}.

Error control and adaptivity are well-established tools for the numerical solution of variational inequalities with finite and boundary elements. 
 Both are typically based on reliable and efficient error estimators.
There are several approaches to construct such error estimators for variational inequalities. These approaches are based on, for instance, averaging techniques, e.g.~\cite{Bartels2004}, dual-weighted residual approaches, e.g.~\cite{Blum2003,Rademacher2015,Schroder2011c}, H(div)-conforming stress approximations, e.g.~\cite{Weiss2010}, the hypercircle method, e.g.~\cite{Braess2008} or residual-based techniques, e.g.~\cite{Alberty1999a,Burg2015,Carstensen2006a,Carstensen2017, Dorsek2010,Hild2007,Krause2015,Schroder2011} and references therein. A typical ingredient for the derivation of a posteriori error estimators consists in the introduction of an auxiliary problem in the form of a variational  equation so that error estimators for variational equations can be applied, see e.g.~\cite{Veeser2001,Banz2025e,Petsche2017,schrder2011,Bammer2025e,Banz2024,Braess2005} and references therein. In particular, we refer to \cite{Banz2021} where a general framework based on this approach is proposed for variational inequalities of the first kind. 

The aim of this paper is to give an abstract framework for constructing a posteriori error estimators for variational inequalities of the second kind extending the results of \cite{Banz2021}. For this purpose, we present an abstract formulation of variational inequalities of the second kind and its equivalent mixed formulation. We prove an upper bound for the error of the mixed problem for arbitrarily given elements of the underlying primal and dual spaces in terms of the residual of the equation part of the mixed formulation and a family of functionals $\bm{E}$, which measure a consistency error and the violation of the condition imposed by the functional $j$. These given elements are, for instance, discretization solutions or approximations resulting from iterative solution schemes or from reconstruction approaches. Assuming the existence of a reliable and efficient a posteriori error estimator for the dual norm of the residual, we show upper and (local) lower a posteriori error bounds  for a general error estimator by choosing the optimal and still simple to evaluate functional $\bm{E}$.  The assumed reliable and efficient error estimator for the dual norm of the residual may be realized by introducing an auxiliary problem in the form of a variational equation that is formulated in such a way that the given primal element (as an element of a discretization space) is also the (discrete) Galerkin solution of the auxiliary problem. As we make no limitations to which elements the a posteriori error estimator can be applied, the efficiency estimate contains a reduced order error term. That implies, that the convergence rate of the error estimator is not more than once and not less than half of the convergence rate of the approximation error itself. This is a known phenomena for higher order discretizations of variational inequalities of the second kind, see e.g.~\cite{Banz2022,Banz2024,Bammer2025e}. For specific, typically conforming lowest order $h$-versions, this potential (but often numerically not observed) gap in the convergence rates can be closed, see e.g.~\cite{Carstensen2006a,Bammer2025e,Schroder2011}. Here, we also discuss conditions under which this problem does not occur.

To elucidate the theoretical findings, we discuss an idealized frictional problem and a Mosolov type model problem and introduce an $hp$-finite element discretization for these problems. We show for both model problems that they fall within the abstract framework and construct for each an a posteriori error estimator. Finally, we conduct several numerical experiments based on finite element discretizations ranging from lowest order uniform $h$-version to $hp$-adaptivity to demonstrate the applicability of the error estimators.

This paper is structured as follows: In Section~\ref{sec: framework} we introduce a general variational inequality of the second kind and its equivalent mixed formulations. Section~\ref{sec: A Posteriori} is devoted to deriving the general error estimator and establishing its upper and lower error bound. In Section~\ref{sec: application} we discuss an idealized frictional problem and a Mosolov type model problem and construct a posteriori error estimators for each of them. We numerically validate the efficiency and reliability of the a posteriori estimators derived for several $h$- and $hp$-finite element methods in Section~\ref{sec: numerics}. The final Section~\ref{app:Gemischt} contains skipped proofs that belong to the general setup in Section~\ref{sec: framework}.

\section{General Setting}\label{sec: framework}
   Let $V$ be a real Hilbert space with inner product $(\cdot,\cdot)_V$ and induced norm $\| \cdot \|_V$, and let $V^*$ be its dual space with duality pairing $\langle \cdot,\cdot \rangle_V$ and dual norm $\|\cdot\|_{V^*}$. The operator $\A: V\to V^*$ is assumed to be linear, continuous and elliptic, i.e.~for all $v\in V$
    \begin{equation}\label{eq: Contiuity & Ellipticity of A}
        \Vert \A v \Vert_{V^*}\leq C_A\Vert v\Vert_V \qquad \text{and} \qquad  C_a \Vert v\Vert_V^2 \leq \langle \A v,v\rangle_V
    \end{equation}
    for some constants $C_A,\,C_a>0$. Furthermore, let $\lin\in V^*$ be arbitrarily given and let $W$ be another real Hilbert space defined with its dual space $W^*$ and duality pairing $\langle \cdot,\cdot \rangle_W$. Let $\Omega\subset\R^d$ be a bounded domain with {$d\in\N $}. Moreover, let $G\subseteq\partial\Omega$ or $G\subseteq\Omega$. We assume $\gamma:V\to W$ and $\mathfrak{L}:W\to [L^2(G)]^k$ with $k\in\N$ to be bounded linear operators, i.e.~there exist $ C_{\gamma},C_L>0$ such that for all $v \in V$ and $w \in W$
    \begin{equation}\label{eq: Continuity of gamma and L}
        \norm{\gamma(v)}_W\leq C_{\gamma}\norm{v}_V\qquad \text{and} \qquad \norm{\L(w)}_{0,G}\leq C_L\norm{w}_W,
    \end{equation}   
    where $\norm{\cdot}_{0,G}$ denotes the $L^2$-norm on $G$ for scalar or vector valued functions.
     We further assume that $\gamma(V)$ is a Banach space equipped with the norm $\norm{\cdot}_{\gamma(V)}$, which is dense in $W$. Moreover, let $C_l>0$ be a constant such that for all $w \in W$ 
    \begin{equation}\label{eq: Lowerbound for L}
        C_l\norm{w}_W\leq\norm{\L(w)}_{0,G}.
    \end{equation}
    With $\rho\in L^{\infty}_+(G):=\{v\in L^\infty(G):v>0\}$ we define the functional $j: V \to\R$ by
    \begin{equation*}
        j(v) := \bigl(\rho\,,|\mathfrak{L}(\gamma(v))|\bigr)_{0,G}
    \end{equation*}     
    where $(\cdot,\cdot)_{0,G}$ and $|\cdot|$ denote the $L^2$-inner product on $G$ and the Euclidean norm in $\mathbb{R}^k$, respectively.
    
    We consider the following variational inequality of the second kind: Find $u\in V$ such that
    \begin{equation}\label{Variational Inequality}
        \langle \bm{A}u-\bm{\ell},v-u\rangle_V +j(v)-j(u) \geq 0
    \end{equation}
    for all $v\in V$. As $j$ is convex, continuous and proper (see Lemma~\ref{lem: j properties} in Section~\ref{app:Gemischt})
    there exists a unique solution for \eqref{Variational Inequality}, see \cite[Thm.~4.1]{Glowinski1984}.
    
     In order to state a mixed formulation we introduce the operator $\B:W\to W^*$ by
    \begin{equation}\label{eq: Definition of B}
        \langle \B w, z\rangle_W=(\L(w),\L(z))_{0,G}
    \end{equation}
    for all $w,z\in W$.  By \eqref{eq: Continuity of gamma and L}, \eqref{eq: Lowerbound for L} and the properties of the $L^2$-inner product we see that $\B$ is linear, continuous and elliptic, i.e.~for all $w\in W$
    \begin{equation}\label{eq: B cont and ell}
        \norm{\B w}_{W^*}\leq C_B\norm{w}_W \qquad \text{and} \qquad C_b\norm{w}_W^2\leq \langle \B w, w\rangle_W
    \end{equation}
    with $C_B:=C_L^2$ and $C_b:=C_l^2$. With this operator we define
    \begin{equation} \label{eq:DefLambda}
        \Lambda:=\{w\in W: \langle \B \mu, \gamma(v)\rangle_W\leq j(v) \text{ for all } v\in V\}.
    \end{equation}
     which is non-empty, closed and convex, see Lemma~\ref{lem: Lambda properties} in Section~\ref{app:Gemischt}.
    Moreover, the operators $\B$ and $\gamma$ satisfy the following inf-sup condition, which is, in particular, used in the proofs of Theorems~\ref{Existenz Gemischte Formulierung} and  \ref{thm: Weighted functional First estimation}.
    
\begin{theorem}\label{thm:inf-sup}
    \emph{There exists a constant $\beta>0$ such that for all $w\in W$}
        \begin{equation}\label{lem:dense subset norm estimation}
            \beta\norm{w}_W\leq \sup\limits_{\substack{v\in V \\ \norm{v}_V=1}}\langle \B w ,\gamma(v)\rangle_W .
        \end{equation}

\end{theorem}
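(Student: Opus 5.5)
The plan is to test the supremum in \eqref{lem:dense subset norm estimation} with a preimage of $w$ itself under $\gamma$ and then use the ellipticity of $\B$ from \eqref{eq: B cont and ell}. If for a given $w\in W$ we find $v_w\in V$ with $\gamma(v_w)=w$ and $\norm{v_w}_V\leq C_R\norm{w}_W$, with $C_R$ independent of $w$, then for $w\neq 0$
\begin{equation*}
\sup\limits_{\substack{v\in V \\ \norm{v}_V=1}}\langle \B w ,\gamma(v)\rangle_W \;\geq\; \frac{\langle \B w,\gamma(v_w)\rangle_W}{\norm{v_w}_V} \;=\; \frac{\langle \B w,w\rangle_W}{\norm{v_w}_V}\;\geq\; \frac{C_b\norm{w}_W^2}{C_R\norm{w}_W},
\end{equation*}
so the claim holds with $\beta:=C_b/C_R$. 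The case $w=0$ is trivial. So everything reduces to the existence of a bounded right inverse of $\gamma$.

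\textbf{Step 1 (bounded right inverse).} I would equip $\gamma(V)$ with the quotient norm $\norm{g}_{\gamma(V)}:=\inf\{\norm{v}_V:\gamma(v)=g\}$. This is presumably the Banach structure the setting has in mind, and it makes $\gamma(V)\cong V/\ker\gamma$, with $\ker\gamma$ closed by \eqref{eq: Continuity of gamma and L}. Since $V$ is a Hilbert space, the restriction of $\gamma$ to $(\ker\gamma)^\perp$ is a bijective isometry onto $(\gamma(V),\norm{\cdot}_{\gamma(V)})$, which gives a linear preimage map $R$ with $\norm{Rg}_V=\norm{g}_{\gamma(V)}$. What remains is to compare $\norm{\cdot}_{\gamma(V)}$ with $\norm{\cdot}_W$ on $\gamma(V)$. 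The inequality $\norm{g}_W\le C_\gamma\norm{g}_{\gamma(V)}$ follows from \eqref{eq: Continuity of gamma and L}. For the reverse inequality, the identity map $(\gamma(V),\norm{\cdot}_{\gamma(V)})\to(\gamma(V),\norm{\cdot}_W)$ is a continuous bijection. If $\gamma(V)$ is complete also in $\norm{\cdot}_W$, i.e.\ closed in $W$, the open mapping theorem yields $\norm{g}_{\gamma(V)}\le C_R\norm{g}_W$. Combined with density, this gives $\gamma(V)=W$ and the bounded right inverse $R$ defined on all of $W$.

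\textbf{Step 2 (conclusion).} Set $v_w:=Rw$ and insert it into the chain of inequalities above.

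\textbf{Main obstacle.} The main obstacle is the closedness in Step 1, and I do not expect it to follow from density alone. A typical instance is the trace $H^1(\Omega)\to L^2(\Gamma)$ with $W=L^2(\Gamma)$. There the supremum behaves like the $H^{-1/2}$-norm of $w$, and a uniform $\beta$ cannot exist. My plan would therefore be to read the hypotheses as saying that $\norm{\cdot}_{\gamma(V)}$ and $\norm{\cdot}_W$ are equivalent on $\gamma(V)$, so that density upgrades to $\gamma(V)=W$. Examples of this kind are $W=H^{1/2}$ on the contact part, or $\gamma$ the identity or a surjective gradient-type map in the Mosolov setting. I would first verify this equivalence in the concrete model problems and then state it explicitly as the property used in Step 1.
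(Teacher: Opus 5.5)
Your argument is correct under the hypothesis you isolate, and the obstacle you flag is genuine. The paper's proof does not overcome it; it passes over it.

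\textbf{Comparison with the paper's route.} The first ingredient is the same in both proofs. The paper's subspace $V_\gamma$ with $\norm{v}_V\le c_\gamma\norm{\gamma(v)}_{\gamma(V)}$, taken from \cite[Lem.~2.3]{Burg2015}, is your $(\ker\gamma)^\perp$ with the quotient norm. The paper then asserts $\norm{\B w}_{W^*}=\norm{\B w|_{\gamma(V)}}_{\gamma(V)^*}$ because $\gamma(V)$ is dense in $W$. Density only gives $\norm{\B w}_{W^*}=\sup_{z\in\gamma(V)\setminus\{0\}}\langle\B w,z\rangle_W/\norm{z}_W$, with the $W$-norm in the denominator. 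Replacing it by $\norm{z}_{\gamma(V)}$ in the required direction needs $\norm{z}_{\gamma(V)}\le C\norm{z}_W$ on $\gamma(V)$. That is exactly your norm equivalence, and under it completeness plus density give $\gamma(V)=W$, so the density argument becomes vacuous. For a strictly stronger norm, such as $H^{1/2}(\Gamma_C)$ inside $L^2(\Gamma_C)$, one only gets $\norm{\B w|_{\gamma(V)}}_{\gamma(V)^*}\le C\norm{\B w}_{W^*}$, which is the wrong inequality. Once the hypothesis is explicit, your route is the more economical one: you test with $v_w=Rw$ and use the ellipticity of $\B$ directly. It needs neither a dual norm on $\gamma(V)$ nor the extension of $\B w|_{\gamma(V)}$ to $W$.

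\textbf{Your extra hypothesis is necessary.} By \eqref{eq: B cont and ell} and Lax--Milgram, $\B:W\to W^*$ is an isomorphism. Hence \eqref{lem:dense subset norm estimation} says precisely that the adjoint $\gamma^*:W^*\to V^*$ is bounded below. By the closed range theorem this holds if and only if $\gamma(V)=W$.

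So the statement is false under the hypotheses as written. Your trace example is a counterexample inside the paper's own application in Section~\ref{sec: idealized friction}. There $\gamma(V)=H^{1/2}(\Gamma_C)\subsetneq L^2(\Gamma_C)=W$. Take $w_n:=\sin(nx_1)$ on $\Gamma_C$. Then $\norm{w_n}_{0,\Gamma_C}\to1$, while $\sup_{\norm{v}_{1,\Omega}=1}(w_n,\gamma_C(v))_{0,\Gamma_C}\to0$, because $\gamma_C:H^1_D(\Omega)\to L^2(\Gamma_C)$ is compact and $w_n\rightharpoonup0$.

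Consequences for the rest of the paper:
\begin{itemize}
\item The bound for $\norm{\lambda-\tilde\lambda}_{0,\Gamma_C}$ obtained through Theorem~\ref{thm: Weighted functional First estimation} is not justified in the friction setting.
\item The uniqueness of $\lambda$ in Theorem~\ref{Existenz Gemischte Formulierung} survives, since it only needs density.
\end{itemize}

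\textbf{One correction to your list of admissible instances.} Choosing $W=H^{1/2}(\Gamma_C)$ restores surjectivity of $\gamma_C$. But then \eqref{eq: Lowerbound for L} fails for $\L$ the embedding into $L^2(\Gamma_C)$. More generally, \eqref{eq: Lowerbound for L} together with \eqref{lem:dense subset norm estimation} forces $\L(\gamma(V))=\L(W)$ to be closed in $[L^2(G)]^k$. Therefore no choice of $W,\gamma,\L$ with $\L\circ\gamma=\gamma_C$ makes the friction problem satisfy both conditions. The Mosolov settings do satisfy your hypothesis: $\gamma=\operatorname{id}$, or $\gamma=\nabla$ onto $\nabla H^1_0(\Omega)$.
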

    \begin{proof}
     See Section~\ref{app:Gemischt}.
    \end{proof}

%
    \begin{theorem}\label{Existenz Gemischte Formulierung}
        If $u\in V$ is the solution to \eqref{Variational Inequality}, then there exists a ${\lambda}\in\Lambda$ such that $(u,{\lambda})$ uniquely solves the mixed formulation
        \begin{subequations}\label{Mixed Formulation}
            \begin{align}
                \langle \bm{A}u,v\rangle_V &= \langle \bm{\ell}, v\rangle_V - \langle\B \lambda,\gamma(v)\rangle_W,\label{Mixed Formulation Gleichung}\\
                \langle \B(\mu&-\lambda),\gamma(u)\rangle_W \leq 0\label{Mixed Formulation Ungleichung}
            \end{align}
        \end{subequations}
        for all $v\in V$ and all $\mu\in \Lambda$.
        Conversely, if $(u,{\lambda})\in V\times \Lambda$ is a solution of the mixed formulation \eqref{Mixed Formulation}, then $u$ is a solution of \eqref{Variational Inequality}.
        Furthermore, it holds 
        \begin{equation}\label{eq: conditions on lambda and gamma u}
                j(u)= \langle \B \lambda,\gamma(u)\rangle_W.
        \end{equation}
    \end{theorem}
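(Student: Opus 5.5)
Following Glowinski's approach for variational inequalities of the second kind, I will build the multiplier from a subgradient of $j$ and then use the inf-sup condition of Theorem~\ref{thm:inf-sup} for uniqueness. There are four steps: construct $\lambda$, prove the converse, prove uniqueness, and check \eqref{eq: conditions on lambda and gamma u}.

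\emph{Step 1 (existence of $\lambda$).} Set $q:=\L(\gamma(u))\in[L^2(G)]^k$ and define pointwise $\xi:=\rho\, q/|q|$ where $q\neq 0$. On the set $\{q=0\}$, the choice of $\xi$ is dictated by the variational inequality. To avoid choosing it by hand, I will use Hahn--Banach. Testing \eqref{Variational Inequality} with $v=u\pm tw$ and letting $t\to0$ gives
\begin{equation*}
|\langle \A u-\lin,w\rangle_V + j'(u;w)|\text{-type bounds},
\end{equation*}
and in particular $\langle \lin-\A u,w\rangle_V\le j(w)$ for all $w\in V$, by sublinearity of $j$. Testing with $v=0$ and $v=2u$ yields $\langle \lin-\A u,u\rangle_V=j(u)$.
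The functional $w\mapsto\langle \lin-\A u,w\rangle_V$ is therefore dominated by the seminorm $j(w)\le \|\rho\|_\infty |G|^{1/2}C_LC_\gamma\|w\|_V$. It vanishes on $\ker\gamma$, so it depends only on $\gamma(w)$. Moreover, it is bounded by $\|\rho\|_\infty\|\L\gamma(w)\|_{0,G}$, which is controlled by $\|\gamma(w)\|_W$. By density of $\gamma(V)$ in $W$, it therefore extends to a bounded functional $g\in W^*$. Since $\B$ is elliptic and continuous by \eqref{eq: B cont and ell}, Lax--Milgram gives a unique $\lambda\in W$ with $\B\lambda=g$. Then \eqref{Mixed Formulation Gleichung} holds by construction, $\lambda\in\Lambda$ because $\langle \B\lambda,\gamma(v)\rangle_W\le j(v)$, and \eqref{eq: conditions on lambda and gamma u} is exactly the identity $\langle \lin-\A u,u\rangle_V=j(u)$. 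Finally, \eqref{Mixed Formulation Ungleichung} follows from $\langle\B\mu,\gamma(u)\rangle_W\le j(u)=\langle\B\lambda,\gamma(u)\rangle_W$ for all $\mu\in\Lambda$.

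\emph{Step 2 (converse).} Given a solution $(u,\lambda)$ of \eqref{Mixed Formulation}, I first establish \eqref{eq: conditions on lambda and gamma u}. Since $\lambda\in\Lambda$, we have $\langle\B\lambda,\gamma(u)\rangle_W\le j(u)$. For the reverse inequality, I need some $\mu\in\Lambda$ with $\langle\B\mu,\gamma(u)\rangle_W=j(u)$. The natural candidate is $\mu$ with $\L\mu=\rho q/|q|$ on $\{q\ne0\}$ and $0$ elsewhere. This requires the range of $\L$ to contain such functions, which is not clear in this abstract setting. This is the main obstacle. I would try to produce the candidate via the Hahn--Banach construction of Step~1 applied to the functional $t\gamma(u)\mapsto t\,j(u)$ on $\operatorname{span}\{\gamma(u)\}$, dominated by $j$. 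The extension is bounded on $\gamma(V)$ with respect to $\|\gamma(\cdot)\|_W$ because $j(v)\le\|\rho\|_\infty|G|^{1/2}C_L\|\gamma(v)\|_W$. It therefore extends to $W^*$ and equals $\B\mu$ for some $\mu$, and this $\mu$ lies in $\Lambda$. Inequality \eqref{Mixed Formulation Ungleichung} then gives $j(u)\le\langle\B\lambda,\gamma(u)\rangle_W$. With this identity in hand,
\begin{equation*}
\langle\A u-\lin,v-u\rangle_V=-\langle\B\lambda,\gamma(v)\rangle_W+j(u)\ge -j(v)+j(u),
\end{equation*}
which is \eqref{Variational Inequality}. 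The same argument applied to the $(u,\lambda)$ from Step~1 also confirms \eqref{eq: conditions on lambda and gamma u} for every solution of \eqref{Mixed Formulation}.

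\emph{Step 3 (uniqueness).} The first component $u$ is unique because of the converse and the uniqueness of the solution of \eqref{Variational Inequality}. If $\lambda_1,\lambda_2$ both solve \eqref{Mixed Formulation} with this $u$, subtracting the two instances of \eqref{Mixed Formulation Gleichung} gives $\langle\B(\lambda_1-\lambda_2),\gamma(v)\rangle_W=0$ for all $v\in V$. Theorem~\ref{thm:inf-sup} then yields $\lambda_1=\lambda_2$.
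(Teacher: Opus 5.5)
Your proof is correct. Steps~1 and~3 follow the paper: the functional $v\mapsto\langle\lin-\A u,v\rangle_V$ is factored through $\gamma$, extended by density of $\gamma(V)$ to an element of $W^*$, and pulled back through $\B$ by Lax--Milgram, and uniqueness of $\lambda$ comes from Theorem~\ref{thm:inf-sup}. The paper checks the factorization by testing with $v_0-v_1+u$. You obtain it instead from the domination $\langle\lin-\A u,w\rangle_V\le j(w)$. That also gives the bound $|\langle\lin-\A u,w\rangle_V|\le\norm{\rho}_{0,G}C_L\norm{\gamma(w)}_W$, which is what the density extension actually needs and which the paper does not spell out.

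The real difference is in the converse. The paper says \eqref{eq: conditions on lambda and gamma u} follows ``as above'' from $\lambda\in\Lambda$ and \eqref{Mixed Formulation Ungleichung}. But these only give $\langle\B\lambda,\gamma(u)\rangle_W\le j(u)$. The reverse inequality needs some $\mu\in\Lambda$ with $\langle\B\mu,\gamma(u)\rangle_W=j(u)$, and the paper never produces one. Your Hahn--Banach construction supplies exactly this, and it is sound. The functional $p(z):=(\rho,|\L(z)|)_{0,G}$ is even and sublinear on $\gamma(V)$, so the extension $F\le p$ satisfies $|F|\le p\le\norm{\rho}_{0,G}C_L\norm{\cdot}_W$. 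Hence $F$ extends to $W^*$ and equals $\B\mu$ for some $\mu\in\Lambda$ with $\langle\B\mu,\gamma(u)\rangle_W=j(u)$. You were also right that the pointwise candidate $\rho q/|q|$ is unavailable, since the range of $\L$ need not contain it.

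A shorter fix avoids Hahn--Banach. Take $(u^*,\lambda^*)$ from Step~1 and subtract the two instances of \eqref{Mixed Formulation Gleichung}. Then test with $u-u^*$ and apply \eqref{Mixed Formulation Ungleichung} crosswise: $\mu=\lambda^*$ for $(u,\lambda)$ and $\mu=\lambda$ for $(u^*,\lambda^*)$. This gives $C_a\norm{u-u^*}_V^2\le 0$, and Theorem~\ref{thm:inf-sup} then gives $\lambda=\lambda^*$. That route reuses the existence part. Yours proves \eqref{eq: conditions on lambda and gamma u} directly for every mixed solution, and shows $j(u)=\max_{\mu\in\Lambda}\langle\B\mu,\gamma(u)\rangle_W$.

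Two minor points. The display with ``$j'(u;w)$-type bounds'' is not a statement; just test with $v=u+w$ and use subadditivity of $j$. In Step~1, the bound $\|\rho\|_\infty\norm{\L(\gamma(w))}_{0,G}$ is missing the factor $|G|^{1/2}$.
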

    \begin{proof}
        See Section~\ref{app:Gemischt}. 
    \end{proof}

\section{A Posteriori Error Estimation}\label{sec: A Posteriori}

    In this section, we derive a general error estimator of the mixed formulation \eqref{Mixed Formulation}, which is based on the combination of an estimation of the dual norm of the residual of \eqref{Mixed Formulation Gleichung} and a functional $\E$ that measures a consistency error and the violation of the condition \eqref{eq: conditions on lambda and gamma u}. 
    For the estimates we consider arbitrarily given $\tilde{u}\in V$ and $\tilde{\lambda}\in W$, which are typically some approximations of $u$ and $\lambda$, respectively. The functional $\E(\cdot;\tilde{u},\tilde{\lambda}):[L^2(G)]^k \rightarrow \mathbb{R}$ is defined as 
    \begin{equation*}
        \E(\theta;\tilde{u},\tilde{\lambda}):= \norm{{\theta}-\L(\tilde{\lambda})}_{0,G}^2+j(\tilde{u})-\bigl(\theta,\L(\gamma(\tilde{u}))\bigr)_{0,G}
    \end{equation*}
    for $\theta\in [L^2(G)]^k$.
    We note that due to the definition of $j(\cdot)$ it holds for every $\theta\in\Lambda_\rho:=\{\tau\in [L^2(G)]^k: |\tau|\leq \rho\}$ that
    \begin{equation}\label{eq: j-mu geq 0}
        j(v)- \bigl(\theta,\L(\gamma(v))\bigr)_{0,G}\geq 0
    \end{equation}
    for all $v\in V$. We also point out that $\L(\mu) \in \Lambda_\rho$ implies $\mu \in \Lambda$, whereas the opposite direction holds if $\L(\gamma(V))$ is dense in $[L^2(G)]^k$, see Lemma~\ref{lem: Lambda weak strong equivalence}. 

 
    \begin{theorem}\label{thm: Weighted functional First estimation}
        There exist constants $C_{\text{Res}}, C_r>0$ independent of $(\tilde{u},\tilde{\lambda})$ such that for all $\theta\in \Lambda_\rho$
        \begin{equation}\label{eq: First upper error estimate}
            \norm{u-\tilde{u}}_V^2+\norm{{\lambda}-\tilde{\lambda}}_{W}^2\leq C_{\text{Res}}\:\norm{\bm{A}\tilde{u}-\bm{\ell}+\B\tilde{\lambda}\circ\gamma}_{V^*}^2+C_r\:\bm{E}(\theta;\tilde{u},\tilde{\lambda}).
        \end{equation}

    \end{theorem}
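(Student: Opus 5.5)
Define the residual $R:=\A\tilde u-\lin+\B\tilde\lambda\circ\gamma\in V^*$, i.e.\ $\langle R,v\rangle_V=\langle \A\tilde u-\lin,v\rangle_V+\langle\B\tilde\lambda,\gamma(v)\rangle_W$. Subtracting \eqref{Mixed Formulation Gleichung} gives the error equation
\[
\langle \A(\tilde u-u),v\rangle_V+\langle \B(\tilde\lambda-\lambda),\gamma(v)\rangle_W=\langle R,v\rangle_V\qquad\text{for all } v\in V.
\]
The multiplier error is bounded first through the inf-sup condition of Theorem~\ref{thm:inf-sup}: $\beta\norm{\lambda-\tilde\lambda}_W\le \sup_{\norm{v}_V=1}\langle\B(\tilde\lambda-\lambda),\gamma(v)\rangle_W\le \norm{R}_{V^*}+C_A\norm{u-\tilde u}_V$. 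Hence it suffices to bound $\norm{u-\tilde u}_V^2$ by $\norm{R}_{V^*}^2$ and $\E(\theta;\tilde u,\tilde\lambda)$ up to absorbable terms.

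Test the error equation with $v=\tilde u-u$ and use ellipticity:
\[
C_a\norm{u-\tilde u}_V^2\le \langle R,\tilde u-u\rangle_V-\langle\B(\tilde\lambda-\lambda),\gamma(\tilde u-u)\rangle_W .
\]
The key step is the sign of the last term. By \eqref{eq: Definition of B},
$-\langle\B(\tilde\lambda-\lambda),\gamma(\tilde u-u)\rangle_W=(\L\lambda-\L\tilde\lambda,\L\gamma\tilde u-\L\gamma u)_{0,G}$. Insert $\theta$ and split this into three pieces.
\begin{enumerate}[(i)]
\item $(\L\lambda,\L\gamma\tilde u)_{0,G}-(\L\lambda,\L\gamma u)_{0,G}=\langle\B\lambda,\gamma(\tilde u)\rangle_W-j(u)\le j(\tilde u)-j(u)$, using $\lambda\in\Lambda$ and \eqref{eq: conditions on lambda and gamma u}.
\item $-(\L\tilde\lambda,\L\gamma\tilde u)_{0,G}+(\L\tilde\lambda,\L\gamma u)_{0,G}=-(\theta,\L\gamma\tilde u)_{0,G}+(\theta,\L\gamma u)_{0,G}+(\theta-\L\tilde\lambda,\L\gamma(\tilde u-u))_{0,G}$.
\item By \eqref{eq: j-mu geq 0} with $\theta\in\Lambda_\rho$, $(\theta,\L\gamma u)_{0,G}\le j(u)$.
\end{enumerate}
Summing (i)–(iii), the $j(u)$ terms cancel and we obtain
\[
C_a\norm{u-\tilde u}_V^2\le \norm{R}_{V^*}\norm{u-\tilde u}_V+\bigl[j(\tilde u)-(\theta,\L\gamma\tilde u)_{0,G}\bigr]+\norm{\theta-\L\tilde\lambda}_{0,G}\,C_LC_\gamma\norm{u-\tilde u}_V .
\]
Young's inequality now absorbs $\norm{u-\tilde u}_V$ on the left, leaving $\norm{R}_{V^*}^2$, the nonnegative bracket, and $\norm{\theta-\L\tilde\lambda}_{0,G}^2$. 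The last two together are controlled by a constant multiple of $\E(\theta;\tilde u,\tilde\lambda)$, since both are nonnegative.

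Finally, combine this with the inf-sup bound to get $\norm{\lambda-\tilde\lambda}_W^2\lesssim\norm{R}^2_{V^*}+\norm{u-\tilde u}_V^2$, which yields \eqref{eq: First upper error estimate} with explicit constants depending only on $C_a,C_A,C_L,C_\gamma,\beta$. The main obstacle is the sign bookkeeping in the splitting step, which relies on $\lambda\in\Lambda$, the identity $j(u)=\langle\B\lambda,\gamma(u)\rangle_W$, and $\theta\in\Lambda_\rho$. The rest is routine.
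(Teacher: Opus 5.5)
Your proof is correct and takes essentially the same approach as the paper. Both proofs use ellipticity together with the first mixed equation, and both insert $\theta$ into the coupling term $\langle \B(\tilde\lambda-\lambda),\gamma(u-\tilde u)\rangle_W$. The $j(u)$ terms then cancel via $\lambda\in\Lambda$, $j(u)=\langle\B\lambda,\gamma(u)\rangle_W$ and \eqref{eq: j-mu geq 0}, after which Young's inequality and the inf-sup condition of Theorem~\ref{thm:inf-sup} finish the argument. The only differences are cosmetic: you split the coupling term three ways, and you use a single absorption step where the paper uses two Young parameters $\delta,\varepsilon$.
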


    \begin{proof}
        By applying \eqref{eq: Contiuity & Ellipticity of A} and \eqref{Mixed Formulation Gleichung} as well as writing $\langle \B{\lambda}\circ\gamma,\tilde{u}\rangle_V = \langle \B{\lambda},\gamma(\tilde{u})\rangle_W$, we obtain
        \begin{align*}
            C_a\norm{u-\tilde{u}}_V^2 &\leq \langle \bm{A}(u-\tilde{u}),u-\tilde{u}\rangle_V\\
            & = \langle \bm{\ell}-\B{\lambda}\circ \gamma-\bm{A}\tilde{u},u-\tilde{u}\rangle_V\\
            & = \langle \bm{\ell} -\B\tilde{\lambda}\circ \gamma -\bm{A}\tilde{u},u-\tilde{u}\rangle_V- \langle \B{\lambda}\circ\gamma,u-\tilde{u}\rangle_V + \langle \B\tilde{\lambda}\circ\gamma,u-\tilde{u}\rangle_V.
        \end{align*}
        Then, by the linearity of $\B$ and $\gamma$, the definition of the dual norm and Young's inequality we have
        \begin{equation*}
        \aligned
             C_a\norm{u-\tilde{u}}_V^2 &\leq \norm{\bm{A}\tilde{u}-\bm{\ell}+\B\tilde{\lambda}\circ\gamma}_{V^*}\norm{u-\tilde{u}}_V +\langle \B(\tilde{\lambda}-{\lambda}),\gamma(u-\tilde{u})\rangle_W\\
            &\leq \frac{1}{4\delta} \norm{\bm{A}\tilde{u}-\bm{\ell}+\B\tilde{\lambda}\circ\gamma}_{V^*}^2 + \delta\norm{u-\tilde{u}}_V^2 + \langle \B(\tilde{\lambda}-{\lambda}),\gamma(u-\tilde{u})\rangle_W
            \endaligned
        \end{equation*}
        for all $\delta>0$.
        This implies
        \begin{equation}\label{u-v Estimate}
            \norm{u-\tilde{u}}_V^2 \leq \frac{1}{4\delta(C_a-\delta)}\norm{\bm{A}\tilde{u}-\bm{\ell}+\B\tilde{\lambda}\circ\gamma}_{V^*}^2 +\frac{1}{C_a-\delta}\langle \B(\tilde{\lambda}-{\lambda}),\gamma(u-\tilde{u})\rangle_W
        \end{equation}
        for $C_a>\delta>0$.
        Exploiting \eqref{eq: Definition of B}, \eqref{eq: conditions on lambda and gamma u} and $\lambda \in\Lambda$, we obtain for any $\theta \in \Lambda_\rho$ that
        \begin{equation*}
            \aligned
                &\langle \B(\tilde{\lambda}-{\lambda}),\gamma(u-\tilde{u})\rangle_W\\
                &\quad =  \langle \B\tilde{\lambda},\gamma(u-\tilde{u})\rangle_W -  \langle \B\lambda,\gamma(u-\tilde{u})\rangle_W + \bigl(\theta,\L(\gamma(u-\tilde{u}))\bigr)_{0,G}- \bigl(\theta,\L(\gamma(u-\tilde{u}))\bigr)_{0,G}\\
                &\quad = \bigl(\L(\tilde{\lambda})-\theta,\L(\gamma(u-\tilde{u}))\bigr)_{0,G} + \bigl(\theta-\L(\lambda),\L(\gamma(u))\bigr)_{0,G}-\bigl(\theta,\L(\gamma(\tilde{u}))\bigr)_{0,G} + \langle \B\lambda,\gamma(\tilde{u})\rangle_W\\
                &\quad \leq \bigl(\L(\tilde{\lambda})-\theta,\L(\gamma(u-\tilde{u}))\bigr)_{0,G} + j(\tilde{u}) -\bigl(\theta,\L(\gamma(\tilde{u}))\bigr)_{0,G} +\bigl(\theta,\L(\gamma(u))\bigr)_{0,G} - j(u).
            \endaligned
        \end{equation*}
        Then, as $\theta \in \Lambda_\rho$, we obtain with \eqref{eq: j-mu geq 0} and \eqref{eq: Continuity of gamma and L} that
        \begin{equation*}
        \aligned
             &\langle \B(\tilde{\lambda}-{\lambda}),\gamma(u-\tilde{u})\rangle_W\\
                &\quad \leq \bigl(\L(\tilde{\lambda})-\theta,\L(\gamma(u-\tilde{u}))\bigr)_{0,G} + j(\tilde{u}) -\bigl(\theta,\L(\gamma(\tilde{u}))\bigr)_{0,G}+\bigl(\theta,\L(\gamma(u))\bigr)_{0,G} - j(u)\\
                &\quad \leq \bigl(\L(\tilde{\lambda})-\theta,\L(\gamma(u-\tilde{u}))\bigr)_{0,G} + j(\tilde{u}) -\bigl(\theta,\L(\gamma(\tilde{u}))\bigr)_{0,G}\\
                &\quad \leq \norm{\L(\tilde{\lambda})-\theta}_{0,G}\norm{\L(\gamma(u-\tilde{u}))}_{0,G}+j(\tilde{u}) -\bigl(\theta,\L(\gamma(\tilde{u}))\bigr)_{0,G}\\
                &\quad \leq C_LC_\gamma\norm{\theta-\L(\tilde{\lambda})}_{0,G}\norm{u-\tilde{u}}_V+j(\tilde{u}) -\bigl(\theta,\L(\gamma(\tilde{u}))\bigr)_{0,G}.
                \endaligned
        \end{equation*}
        We combine the above estimate with Young's inequality, which results in
        \begin{equation}\label{Kappa Lambda duale Paarung}
            \aligned
                 \langle \B(\tilde{\lambda}-{\lambda}),\gamma(u-\tilde{u})\rangle_W
                &\leq \frac{1}{4\varepsilon}\norm{\theta-\L(\tilde{\lambda})}_{0,G}^2+\varepsilon C_L^2C_\gamma^2\norm{u-\tilde{u}}_V^2+j(\tilde{u}) -\bigl(\theta,\L(\gamma(\tilde{u}))\bigr)_{0,G}\\
                &\leq \varepsilon C_L^2C_\gamma^2\norm{u-\tilde{u}}_V^2 + \frac{1}{\varepsilon}\bm{E}(\theta;\tilde{u},\tilde{\lambda})
            \endaligned
        \end{equation}
        for all $\varepsilon\in (0,1)$ as \eqref{eq: j-mu geq 0} holds. Inserting \eqref{Kappa Lambda duale Paarung} into \eqref{u-v Estimate} yields the inequality
        \begin{equation}\label{eq: u-v intermediate estimate}
            \norm{u\!-\!\tilde{u}}_V^2 \leq \frac{1}{4\delta(C_a\!-\!\delta\!-\!\varepsilon  C_L^2C_{\gamma}^2) }\norm{\bm{A}\tilde{u}\!-\!\bm{\ell}\!+\!\B\tilde{\lambda}\circ\gamma}_{V^*}^2 +\frac{1}{\varepsilon( C_a\!-\!\delta\!-\!\varepsilon C_L^2C_{\gamma}^2)}\, \bm{E}(\theta;\tilde{u},\tilde{\lambda})
        \end{equation}
        for $0<\varepsilon<\min \{1,(C_a-\delta)(C_L^2C_{\gamma}^2)^{-1}\}$.
        Next, by the inf-sup condition of Theorem~\ref{thm:inf-sup} and \eqref{Mixed Formulation Gleichung} we obtain
        \begin{equation*}
            \aligned
                \beta \norm{\lambda-\tilde{\lambda}}_W & \leq \sup\limits_{\substack{v\in V\\ \norm{v}_V=1}}\langle \B(\lambda-\tilde{\lambda}),\gamma(v)\rangle_W\\
                &=\sup\limits_{\substack{v\in V\\ \norm{v}_V=1}}\langle \B(\lambda-\tilde{\lambda})\circ \gamma,v\rangle_V\\
                &= \norm{\B(\lambda-\tilde{\lambda})\circ\gamma}_{V^*}\\
                &=  \norm{\lin-\A u-\B\tilde{\lambda}\circ\gamma}_{V^*}\\
                &\leq \norm{\A \tilde{u}-\lin+\B\tilde{\lambda}\circ\gamma}_{V^*}+\norm{\A(u-\tilde{u})}_{V^*}.
            \endaligned
        \end{equation*}
        Then, we use \eqref{eq: Contiuity & Ellipticity of A} and have
        \begin{equation}\label{lambda-kappa estimate}
        \aligned
                \norm{\lambda-\tilde{\lambda}}_W^2 &\leq \left(\frac{1}{\beta}\right)^2 \bigl(\norm{\A \tilde{u}-\lin+\B\tilde{\lambda}\circ\gamma}_{V^*}+C_A\norm{u-\tilde{u}}_{V}\bigr)^2\\
                &\leq 2\left(\frac{1}{\beta}\right)^2\norm{\A \tilde{u}-\lin+\B\tilde{\lambda}\circ\gamma}_{V^*}^2+2\left(\frac{C_A}{\beta}\right)^2\norm{u-\tilde{u}}_V^2.
                \endaligned
        \end{equation}
        Adding \eqref{lambda-kappa estimate} to \eqref{eq: u-v intermediate estimate} yields \eqref{eq: First upper error estimate} with 
        \begin{equation*}
        \aligned
            C_{\text{Res}}\!&:=\!\frac{\beta^2\!+\!2\,C_A^2\!+\!8\delta(C_a\!-\!\delta\!-\!\varepsilon C_L^2C_\gamma^2)}{\beta^24\delta(C_a\!-\!\delta\!-\!\varepsilon  C_L^2C_{\gamma}^2 )} > 0,\qquad 
            C_r\!&:=\!\frac{\beta^2\!+\!2\,C_A^2}{\varepsilon \beta^2(C_a\!-\!\delta\!-\!\varepsilon  C_L^2C_{\gamma}^2)} > 0
        \endaligned
        \end{equation*}
        for $C_a>\delta>0$ and $\min\{(C_a-\delta)(C_L^2C_{\gamma}^2)^{-1},1\}>\varepsilon>0$.
    \end{proof}


   Theorem~\ref{thm: Weighted functional First estimation} gives an entire family of upper error estimates. It is natural to ask for the best member of that family, i.e~to choose a $\theta^*\in\Lambda_\rho$ such that $\E(\theta^*;\tilde{u},\tilde{\lambda})$ is as small as possible. 
    Using the projection operator $\Pi_{\Lambda_\rho}:[L^2(G)]^k\to\Lambda_\rho$  we conclude from \cite[Lem.~16]{Bammer2025e} that ${\theta}^*:= \Pi_{\Lambda_\rho}(\tilde{\omega})$
    with $\tilde{\omega}:=\L(\tilde{\lambda})+\frac12\L\big(\gamma(\tilde{u})\big)$ is optimal in this sense, i.e.
    \begin{equation} \label{eq:optimal_Prop_theta_star}
            \E(\theta^*;\tilde{u},\tilde{\lambda})=\min\limits_{{\theta}\in \Lambda_\rho}\E(\mu;\tilde{u},\tilde{\lambda}).
        \end{equation}
    Note that $\theta^*$ can be computed explicitly by  
    \begin{align} \label{eq:optimal_theta_star}
    \theta^*= \frac{\rho}{\max\{\rho,|\tilde{\omega}|\}}\tilde{\omega}.
    \end{align}

    \begin{remark} \label{rem:subsetMinimizer}
        We emphasize that $\E(\cdot; \tilde{u},\tilde{\lambda})$ is composed of $L^2$-inner products. Thus, we easily find that $\theta^*|_K$ also minimises
        \begin{equation*}
            \E_K(\theta; \tilde{u},\tilde{\lambda}):=\norm{{\theta}-\L(\tilde{\lambda})}_{0,K}^2+\bigl(\rho,|\L(\gamma(\tilde{u}))|\bigr)_{0,K}-\bigl(\theta,\L(\gamma(\tilde{u}))\bigr)_{0,K}
        \end{equation*}
        over $\{\theta \in [L^2(K)]^k: |\theta|\leq \rho\}$
        for any subdomain $K\subseteq G$. 
    \end{remark}


In order to derive the general error estimator
we assume that there exists a reliable and efficient error estimator $\hat{\eta}$ for $\norm{\A \tilde{u}-\lin+\B\tilde{\lambda}\circ\gamma}_{V^*}$, i.e.~there exist $\widehat{C}_E,\widehat{C}_R>0$ and some (data oscillations) $\operatorname{osc}\geq 0$ such that
    \begin{subequations}\label{Hilfschaetzer}
        \begin{align}
             \norm{\A \tilde{u}-\lin+\B\tilde{\lambda}\circ\gamma}_{V^*}^2&\leq \widehat{C}_R\:\big(\hat{\eta}^2+\operatorname{osc}^2\big)\label{Hilfsschaetzer Zuverlässig} \\
            \widehat{C}_E\:\hat{\eta}^2&\leq \norm{\A \tilde{u}-\lin+\B\tilde{\lambda}\circ\gamma}_{V^*}^2+\operatorname{osc}^2.\label{Hilfsschaetzer Effizienz}
        \end{align}
    \end{subequations}
      To provide an error estimator $\hat{\eta}$ one may consider the 
    auxiliary problem: Find $\hat{u}\in V$ such that 
    \begin{align} \label{Auxiliary Problem}
    \langle \A \hat{u},v \rangle_V = \langle \lin, v\rangle_V - \langle \B\tilde{\lambda}, \gamma(v) \rangle_W
    \end{align}
    for all $v \in V$.  It is easy to see that
    \[
        C_a \|\hat{u}-\tilde{u}\|_V  \leq \norm{\A \tilde{u}-\lin+\B\tilde{\lambda}\circ\gamma}_{V^*}\leq C_A \|\hat{u}-\tilde{u}\|_V,
    \]
    i.e.~one may choose $\hat{\eta}$ as an error estimator for $\|\hat{u}-\tilde{u}\|_V$.

     Eventually, combining $\hat{\eta}^2$ with the functional $\E$ and choosing the optimal $\theta^*$ yields the general error estimator
     \begin{equation*}
          \eta^2:=\hat{\eta}^2+\E(\theta^*;\tilde{u},\tilde{\lambda}).
    \end{equation*}
    
    Before we prove lower and upper bounds for $\eta^2$, we establish the following lemma.
  \begin{lemma}\label{lem: Friction local eff preliminary}
  Let $\L(\lambda) \in \Lambda_\rho $. Then there holds  $\L(\lambda)^\top \L(\gamma(u)) =\rho |\L(\gamma(u))|$ almost everywhere in $G$.
   \end{lemma}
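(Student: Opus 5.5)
The idea is to combine the identity \eqref{eq: conditions on lambda and gamma u} from Theorem~\ref{Existenz Gemischte Formulierung} with a pointwise Cauchy--Schwarz bound. This shows that a nonnegative function has zero integral against the weight $\rho$, so it must vanish almost everywhere.

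\emph{First step.} Using \eqref{eq: Definition of B} and \eqref{eq: conditions on lambda and gamma u}, I rewrite
\begin{equation*}
\bigl(\rho,|\L(\gamma(u))|\bigr)_{0,G} = j(u) = \langle \B\lambda,\gamma(u)\rangle_W = \bigl(\L(\lambda),\L(\gamma(u))\bigr)_{0,G} = \int_G \L(\lambda)^\top \L(\gamma(u))\,dx .
\end{equation*}
Subtracting gives
\begin{equation*}
\int_G \Bigl(\rho\,|\L(\gamma(u))| - \L(\lambda)^\top\L(\gamma(u))\Bigr)\,dx = 0 .
\end{equation*}

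\emph{Second step.} I show the integrand $f:=\rho|\L(\gamma(u))|-\L(\lambda)^\top\L(\gamma(u))$ is nonnegative almost everywhere. By the pointwise Cauchy--Schwarz inequality in $\R^k$ and the assumption $\L(\lambda)\in\Lambda_\rho$, i.e.\ $|\L(\lambda)|\le\rho$ a.e., we get
\begin{equation*}
\L(\lambda)^\top\L(\gamma(u)) \le |\L(\lambda)|\,|\L(\gamma(u))| \le \rho\,|\L(\gamma(u))| \quad\text{a.e. in } G .
\end{equation*}
All terms are integrable, since $\rho\in L^\infty(G)$ and $\L(\lambda),\L(\gamma(u))\in[L^2(G)]^k$.

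\emph{Third step.} A nonnegative integrable function with zero integral vanishes almost everywhere. Hence $f=0$ a.e., which is exactly the claim $\L(\lambda)^\top\L(\gamma(u))=\rho|\L(\gamma(u))|$.

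I do not expect a real obstacle. The one point to get right is the first step: the identity $j(u)=\langle\B\lambda,\gamma(u)\rangle_W$ must be read in $L^2$ form through \eqref{eq: Definition of B}, with $\lambda$ the multiplier from Theorem~\ref{Existenz Gemischte Formulierung}. The hypothesis $\L(\lambda)\in\Lambda_\rho$ is needed because membership $\lambda\in\Lambda$ alone does not give the pointwise bound without the density condition of Lemma~\ref{lem: Lambda weak strong equivalence}.
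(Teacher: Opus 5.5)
Your proposal is correct and follows the paper's proof: the pointwise Cauchy--Schwarz bound with $|\L(\lambda)|\le\rho$ makes the integrand $\rho|\L(\gamma(u))|-\L(\lambda)^\top\L(\gamma(u))$ nonnegative, and \eqref{eq: conditions on lambda and gamma u} read via \eqref{eq: Definition of B} makes its integral vanish. Hence it is zero almost everywhere. Your added integrability remark fills in a detail the paper leaves implicit.
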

  \begin{proof}
     For $\L(\lambda)\in \Lambda_\rho$ it trivially holds that
  \begin{equation}\label{idealized friction local eff hilf}
       \L(\lambda)^\top \L(\gamma(u)) \leq |\L(\lambda)| \, |\L(\gamma(u))| \leq \rho |\L(\gamma(u))|
  \end{equation}
  almost everywhere in $G$. Then, by \eqref{eq: conditions on lambda and gamma u} it holds that
  \begin{equation*}
       0=j(u)- \langle \B \lambda,\gamma(u)\rangle_W = \int_G \rho |\L(\gamma(u))|- \L(\lambda)^\top \L(\gamma(u))\,dx,
 \end{equation*}
  which in combination with \eqref{idealized friction local eff hilf} yields the assertion.
 \end{proof}

    \begin{theorem}\label{Vorstufe Rel und Eff}
        There is a constant $C_{\text{rel}}>0$ independent of $\tilde{u},\tilde{\lambda}$ and $\theta^*$ such that
        \begin{equation*}
            \norm{u-\tilde{u}}_V^2+\norm{{\lambda}-\tilde{\lambda}}_{W}^2 \leq C_{\text{rel}}\Bigl(\eta^2+\operatorname{osc}^2\Bigr).
        \end{equation*}
    \end{theorem}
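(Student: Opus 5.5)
The plan is to combine Theorem~\ref{thm: Weighted functional First estimation} with the reliability of the auxiliary estimator, \eqref{Hilfsschaetzer Zuverlässig}. First we check that the optimal multiplier $\theta^*$ from \eqref{eq:optimal_theta_star} lies in $\Lambda_\rho$. This holds because pointwise $|\theta^*| = \rho|\tilde\omega|/\max\{\rho,|\tilde\omega|\}\le \rho$. Moreover, $\theta^*\in[L^2(G)]^k$, since $|\theta^*|\le|\tilde\omega|$ and $\tilde\omega\in[L^2(G)]^k$ by \eqref{eq: Continuity of gamma and L}. Hence \eqref{eq: First upper error estimate} may be applied with $\theta=\theta^*$, giving
\[
\norm{u-\tilde u}_V^2+\norm{\lambda-\tilde\lambda}_W^2\le C_{\text{Res}}\norm{\A\tilde u-\lin+\B\tilde\lambda\circ\gamma}_{V^*}^2+C_r\,\E(\theta^*;\tilde u,\tilde\lambda).
\]

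Next we insert \eqref{Hilfsschaetzer Zuverlässig} to bound the residual term by $\widehat C_R(\hat\eta^2+\operatorname{osc}^2)$. This yields
\[
\norm{u-\tilde u}_V^2+\norm{\lambda-\tilde\lambda}_W^2\le C_{\text{Res}}\widehat C_R\bigl(\hat\eta^2+\operatorname{osc}^2\bigr)+C_r\,\E(\theta^*;\tilde u,\tilde\lambda).
\]
Since $\theta^*\in\Lambda_\rho$, \eqref{eq: j-mu geq 0} shows that $\E(\theta^*;\tilde u,\tilde\lambda)\ge 0$. All terms are therefore nonnegative, and setting $C_{\text{rel}}:=\max\{C_{\text{Res}}\widehat C_R,\,C_r\}$ gives the bound $C_{\text{rel}}(\hat\eta^2+\E(\theta^*;\tilde u,\tilde\lambda)+\operatorname{osc}^2)=C_{\text{rel}}(\eta^2+\operatorname{osc}^2)$.

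There is no real obstacle. The only points needing care are the membership $\theta^*\in\Lambda_\rho$, which is immediate from the explicit formula, and the fact that $C_{\text{rel}}$ is independent of $\tilde u,\tilde\lambda,\theta^*$. The latter holds because $C_{\text{Res}}$ and $C_r$ depend only on $C_a,C_A,C_L,C_\gamma,\beta$ and the fixed choices of $\delta,\varepsilon$, while $\widehat C_R$ is assumed independent of them.
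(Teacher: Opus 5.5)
Your proposal is correct and follows essentially the same route as the paper: apply Theorem~\ref{thm: Weighted functional First estimation} with $\theta=\theta^*\in\Lambda_\rho$, insert \eqref{Hilfsschaetzer Zuverlässig}, and take $C_{\text{rel}}:=\max\{C_{\text{Res}}\widehat{C}_R,\,C_r\}$. You also spell out two details the paper leaves implicit. One is why $\theta^*\in\Lambda_\rho$. The other is that $\E(\theta^*;\tilde{u},\tilde{\lambda})\geq 0$, which is exactly what is needed to replace $C_r$ by the larger constant $C_{\text{rel}}$.
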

 \begin{proof}
        Since $\theta^*\in \Lambda_\rho$ it follows from Theorem~\ref{thm: Weighted functional First estimation} and \eqref{Hilfsschaetzer Zuverlässig} that
        \begin{align*}
            \norm{u-\tilde{u}}_V^2+\norm{{\lambda}-\tilde{\lambda}}_{W}^2 & \leq C_{\text{Res}}\:\norm{\bm{A}\tilde{u}-\bm{\ell}+\B\tilde{\lambda}\circ\gamma}_{V^*}^2+C_r\:\bm{E}(\theta^*;\tilde{u},\tilde{\lambda}) \\
            & \leq  C_{\text{Res}}\:\widehat{C}_R\:\big(\hat{\eta}^2+\operatorname{osc}^2\big)+C_r\:\bm{E}(\theta^*;\tilde{u},\tilde{\lambda}) \\
            & \leq C_{\text{rel}}\Big(\eta^2+\operatorname{osc}^2\Big)
        \end{align*}
        with $C_{\text{rel}}:= \max\{C_{\text{Res}}\widehat{C}_R, \;C_r\}>0$.
    \end{proof}

    \begin{theorem}\label{effThm}
     Let $\L(\lambda)\in \Lambda_\rho$. Then,
        there is a constant $C_{\text{eff}}>0$ independent of $\tilde{u},\tilde{\lambda}$ and $\theta^*$ such that
        \begin{equation*}
            C_{\text{eff}}\;\eta^2\leq  \norm{u-\tilde{u}}_V^2 + \norm{{\lambda}-\tilde{\lambda}}_{W}^2 + \norm{u-\tilde{u}}_V+ \operatorname{osc}^2.
        \end{equation*}
    \end{theorem}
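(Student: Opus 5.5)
Since $\eta^2=\hat{\eta}^2+\E(\theta^*;\tilde{u},\tilde{\lambda})$, we bound the two parts separately. For $\hat{\eta}^2$ we use the efficiency \eqref{Hilfsschaetzer Effizienz}. This leaves the residual $\norm{\A\tilde{u}-\lin+\B\tilde{\lambda}\circ\gamma}_{V^*}$ to be estimated. By \eqref{Mixed Formulation Gleichung} we have $\lin-\A u-\B\lambda\circ\gamma=0$, so the residual equals $\A(\tilde{u}-u)+\B(\tilde{\lambda}-\lambda)\circ\gamma$. The continuity properties \eqref{eq: Contiuity & Ellipticity of A}, \eqref{eq: B cont and ell} and \eqref{eq: Continuity of gamma and L} then give
\[
\norm{\A\tilde{u}-\lin+\B\tilde{\lambda}\circ\gamma}_{V^*}^2\leq 2C_A^2\norm{u-\tilde{u}}_V^2+2C_B^2C_\gamma^2\norm{\lambda-\tilde{\lambda}}_W^2 .
\]
Hence $\widehat{C}_E\hat{\eta}^2$ is bounded by a multiple of $\norm{u-\tilde{u}}_V^2+\norm{\lambda-\tilde{\lambda}}_W^2+\operatorname{osc}^2$.

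For the functional part we use the minimality \eqref{eq:optimal_Prop_theta_star} of $\theta^*$ over $\Lambda_\rho$. The hypothesis $\L(\lambda)\in\Lambda_\rho$ makes $\theta=\L(\lambda)$ an admissible competitor, so $\E(\theta^*;\tilde{u},\tilde{\lambda})\leq\E(\L(\lambda);\tilde{u},\tilde{\lambda})$. The first term of this competitor is $\norm{\L(\lambda)-\L(\tilde{\lambda})}_{0,G}^2\leq C_L^2\norm{\lambda-\tilde{\lambda}}_W^2$. For the remaining consistency term we use Lemma~\ref{lem: Friction local eff preliminary}, which in integrated form (equivalently \eqref{eq: conditions on lambda and gamma u}) reads $j(u)=(\L(\lambda),\L(\gamma(u)))_{0,G}$. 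Inserting this gives
\[
j(\tilde{u})-\bigl(\L(\lambda),\L(\gamma(\tilde{u}))\bigr)_{0,G}
= j(\tilde{u})-j(u)+\bigl(\L(\lambda),\L(\gamma(u-\tilde{u}))\bigr)_{0,G}.
\]

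Both terms on the right are linear in the error, and this is where the non-squared term $\norm{u-\tilde{u}}_V$ in the statement comes from. By the reverse triangle inequality pointwise, $|j(\tilde{u})-j(u)|\leq(\rho,|\L(\gamma(u-\tilde{u}))|)_{0,G}\leq\norm{\rho}_{0,G}C_LC_\gamma\norm{u-\tilde{u}}_V$; the $L^2$-norm of $\rho$ is finite because $G$ is bounded and $\rho\in L^\infty$. Since $|\L(\lambda)|\leq\rho$, the second term is bounded in the same way by $\norm{\rho}_{0,G}C_LC_\gamma\norm{u-\tilde{u}}_V$. The resulting constant depends only on $\rho$, $G$, $C_L$ and $C_\gamma$, so it is independent of $\tilde{u}$, $\tilde{\lambda}$ and $\theta^*$.

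Summing the bounds for $\hat{\eta}^2$ and $\E(\theta^*;\tilde{u},\tilde{\lambda})$ and taking $C_{\text{eff}}$ to be the reciprocal of the largest constant that appears (with $\widehat{C}_E$ folded in) proves the claim. The delicate step is the treatment of the linear term: the non-smoothness of $j$ prevents a quadratic bound, so the statement only claims the reduced-order term $\norm{u-\tilde{u}}_V$. The pointwise characterization in Lemma~\ref{lem: Friction local eff preliminary} also makes a localized version possible by restricting to subdomains $K$ as in Remark~\ref{rem:subsetMinimizer}.
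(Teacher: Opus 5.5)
Your proof is correct and follows essentially the same route as the paper. You compare $\E(\theta^*;\tilde{u},\tilde{\lambda})$ with the admissible competitor $\L(\lambda)$, insert $j(u)=\langle \B\lambda,\gamma(u)\rangle_W$, and bound $\hat{\eta}^2$ via its efficiency assumption and the continuity of $\A$, $\B$ and $\gamma$. The only cosmetic difference is that you bound $\bigl(\L(\lambda),\L(\gamma(u-\tilde{u}))\bigr)_{0,G}$ pointwise using $|\L(\lambda)|\leq\rho$, whereas the paper uses $\lambda\in\Lambda$ together with subadditivity of $j$ to reach $2j(\tilde{u}-u)$; both lead to the same constant $2C_LC_\gamma\norm{\rho}_{0,G}$.
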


   \begin{proof}
         From \eqref{eq:optimal_Prop_theta_star}, \eqref{eq: conditions on lambda and gamma u}, $\lambda\in\Lambda$, \eqref{eq: Continuity of gamma and L} and Cauchy Schwarz inequality it follows that
        \begin{equation*}
        \begin{split}
            \E(\theta^*;\tilde{u},\tilde{\lambda}) &\leq \E(\L({\lambda});\tilde{u},\tilde{\lambda}) \\
            &= \norm{\L(\lambda-\tilde{\lambda})}_{0,G}^2+j(\tilde{u})-\langle \B\lambda,\gamma(\tilde{u})\rangle_W\\
            &= \norm{\L(\lambda-\tilde{\lambda})}_{0,G}^2+j(\tilde{u})-j(u)+\langle \B\lambda,\gamma(u-\tilde{u})\rangle_W\\
            &\leq \norm{\L(\lambda-\tilde{\lambda})}_{0,G}^2+j(\tilde{u}-u)+j(u)-j(u)+\langle \B\lambda,\gamma(u-\tilde{u})\rangle_W\\
            &\leq \norm{\L(\lambda-\tilde{\lambda})}_{0,G}^2+2j(\tilde{u}-u) \\
            &\leq C_L^2\norm{\lambda-\tilde{\lambda}}_W^2+2\norm{\rho}_{0,G}\norm{\L(\gamma(\tilde{u}-u))}_{0,G}\\
            &\leq C_L^2\norm{\lambda-\tilde{\lambda}}_W^2+2C_LC_\gamma\norm{\rho}_{0,G}\norm{u-\tilde{u}}_V.
         \end{split}
        \end{equation*}
         Thus, we obtain with \eqref{Hilfsschaetzer Effizienz} that
        \begin{multline*}
            \hat{\eta}^2\!+\!\bm{E}(\theta^*;\tilde{u},\tilde{\lambda})\leq\frac{1}{\widehat{C}_E}\Big(\! \norm{\A \tilde{u}\!-\!\lin\!+\!\B\tilde{\lambda}\!\circ\!\gamma}_{V^*}^2\!+\!\operatorname{osc}^2\!\Big)\!+\!C_L^2\norm{\lambda\!-\!\tilde{\lambda}}_W^2\!+\! 2C_LC_\gamma\norm{\rho}_{0,G}\norm{u\!-\!\tilde{u}}_V.
        \end{multline*}
        With \eqref{Mixed Formulation Gleichung}, triangle inequality, the definition of the dual norm $V^*$, \eqref{eq: Contiuity & Ellipticity of A}, \eqref{eq: Continuity of gamma and L} and \eqref{eq: B cont and ell} we easily find that
        \begin{align*}
        \norm{\A \tilde{u}-\lin+\B\tilde{\lambda}\circ\gamma}_{V^*} \leq C_A \norm{u-\tilde{u}}_V + C_BC_\gamma \norm{\lambda - \tilde{\lambda}}_W.
        \end{align*}
        Hence, we have
        \begin{align*}
        \eta^2 = \hat{\eta}^2+\bm{E}(\theta^*;\tilde{u},\tilde{\lambda})\leq C_{\text{eff}}^{-1}\Bigl( \norm{u-\tilde{u}}_V^2 + \norm{{\lambda}-\tilde{\lambda}}_{W}^2 + \norm{u-\tilde{u}}_V+ \operatorname{osc}^2 \Bigr)
        \end{align*}
         with        
            \begin{equation*}\label{localE}
                 C_{\text{eff}}^{-1}:= \max\left\{{2C_A^2\widehat{C}_E^{-1}}{},{2C_B^2C^2_{\gamma}\widehat{C}_E^{-1}}{}+C_L^2,2C_LC_\gamma\norm{\rho}_{0,G},{\widehat{C}_E^{-1}}{}\right\}.
             \end{equation*}
        \end{proof}

With similar arguments as in the proof Theorem~\ref{effThm} we see that $\E(\theta^*;\tilde{u},\tilde{\lambda})$ satisfies a local efficiency estimate.

\begin{corollary} \label{cor:localEffiE}  

  Let $\L(\lambda)\in \Lambda_\rho$. Then, 
        \begin{equation*}
            \bm{E}_K(\theta^*; \tilde{u},\tilde{\lambda})
            \leq \norm{\L(\lambda-\tilde{\lambda})}_{0,K}^2 +2 \norm{\rho}_{0,K}\norm{\L(\gamma(u-\tilde{u}))}_{0,K}
        \end{equation*}
        for any subdomain $K\subseteq G$.
\end{corollary}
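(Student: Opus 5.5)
The plan is to localize the proof of Theorem~\ref{effThm} to a subdomain $K\subseteq G$, using Remark~\ref{rem:subsetMinimizer} and Lemma~\ref{lem: Friction local eff preliminary} in place of the global identity \eqref{eq: conditions on lambda and gamma u}.

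\textbf{Step 1: comparison with $\L(\lambda)|_K$.} By Remark~\ref{rem:subsetMinimizer}, $\theta^*|_K$ minimizes $\E_K(\cdot;\tilde u,\tilde\lambda)$ over $\{\theta\in[L^2(K)]^k: |\theta|\le\rho\}$. The hypothesis $\L(\lambda)\in\Lambda_\rho$ makes $\L(\lambda)|_K$ admissible, so
\begin{equation*}
\E_K(\theta^*;\tilde u,\tilde\lambda)\le \norm{\L(\lambda-\tilde\lambda)}_{0,K}^2+\bigl(\rho,|\L(\gamma(\tilde u))|\bigr)_{0,K}-\bigl(\L(\lambda),\L(\gamma(\tilde u))\bigr)_{0,K}.
\end{equation*}

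\textbf{Step 2: local complementarity.} The global argument used $j(u)=\langle\B\lambda,\gamma(u)\rangle_W$. That identity only holds on all of $G$, so I replace it with Lemma~\ref{lem: Friction local eff preliminary}. The lemma gives $\L(\lambda)^\top\L(\gamma(u))=\rho|\L(\gamma(u))|$ pointwise a.e., hence after integration over $K$,
\begin{equation*}
\bigl(\rho,|\L(\gamma(u))|\bigr)_{0,K}=\bigl(\L(\lambda),\L(\gamma(u))\bigr)_{0,K}.
\end{equation*}
Adding zero in this form, the last two terms of Step 1 become
\begin{equation*}
\bigl(\rho,|\L(\gamma(\tilde u))|-|\L(\gamma(u))|\bigr)_{0,K}+\bigl(\L(\lambda),\L(\gamma(u-\tilde u))\bigr)_{0,K}.
\end{equation*}

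\textbf{Step 3: pointwise bounds.}
\begin{itemize}
\item By the reverse triangle inequality, the first term is at most $(\rho,|\L(\gamma(u-\tilde u))|)_{0,K}$.
\item Since $|\L(\lambda)|\le\rho$ a.e., the second term is also at most $(\rho,|\L(\gamma(u-\tilde u))|)_{0,K}$.
\end{itemize}
Applying Cauchy--Schwarz on $K$ to $2(\rho,|\L(\gamma(u-\tilde u))|)_{0,K}$ gives $2\norm{\rho}_{0,K}\norm{\L(\gamma(u-\tilde u))}_{0,K}$, which is the claimed bound.

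The only delicate point is Step 2. One must not invoke the global identity, since it holds only on all of $G$; the pointwise identity from Lemma~\ref{lem: Friction local eff preliminary} is exactly what allows localization. This is why the hypothesis $\L(\lambda)\in\Lambda_\rho$ is needed.
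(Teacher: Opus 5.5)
Your proposal is correct and follows the paper's proof essentially step for step. The paper likewise compares $\theta^*|_K$ with the admissible competitor $\L(\lambda)|_K$ via Remark~\ref{rem:subsetMinimizer}, and adds zero in the form of the pointwise identity from Lemma~\ref{lem: Friction local eff preliminary} integrated over $K$. It then concludes with the reverse triangle inequality, $|\L(\lambda)|\le\rho$, and Cauchy--Schwarz on $K$.
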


\begin{proof}
        
        For $K\subseteq G$ it holds by Lemma~\ref{lem: Friction local eff preliminary} and the reverse triangle inequality that
        \begin{equation*}
        \aligned
            \bm{E}_K({\L({\lambda})}; \tilde{u},\tilde{\lambda})
            &= \norm{\L(\lambda-\tilde{\lambda})}_{0,K}^2+\bigl(\rho,|\L(\gamma(\tilde{u}))|\bigr)_{0,K}-\bigl(\L(\lambda),\L(\gamma(\tilde{u}))\bigr)_{0,K}\\
            &= \norm{\L(\lambda-\tilde{\lambda})}_{0,K}^2+\bigl(\rho,|\L(\gamma(\tilde{u}))|-|\L(\gamma(u))|\bigr)_{0,K}+\bigl(\L(\lambda),\L(\gamma(u-\tilde{u}))\bigr)_{0,K}\\
            &\leq \norm{\L(\lambda-\tilde{\lambda})}_{0,K}^2+\bigl(\rho,|\L(\gamma(\tilde{u}-u))|\bigr)_{0,K}+\bigl(\L(\lambda),\L(\gamma(u-\tilde{u}))\bigr)_{0,K}\\
            &\leq \norm{\L(\lambda-\tilde{\lambda})}_{0,G}^2+2\bigl(\rho,|\L(\gamma(\tilde{u}-u))|\bigr)_{0,K}\\
            &\leq \norm{\L(\lambda-\tilde{\lambda})}_{0,K}^2 +2 \norm{\rho}_{0,K}\norm{\L(\gamma(u-\tilde{u}))}_{0,K}.
            \endaligned
        \end{equation*}
        Since $\theta^*|_K$ is also the minimiser of $\bm{E}_K(\cdot; \tilde{u},\tilde{\lambda})$ over $\{\theta \in [L^2(K)]^k: |\theta| \leq \rho \}$ for any subdomain $K\subseteq G$, see Remark~\ref{rem:subsetMinimizer}, this yields the assertion. 
    \end{proof}
    {
    To guarantee that $\L(\lambda)\in\Lambda_\rho$  in Theorem \ref{effThm} and Corollary \ref{cor:localEffiE} we need to impose an additional assumption on the operator $\L \circ \gamma$.
    \begin{lemma}\label{lem: Lambda weak strong equivalence}
        Let $\L(\gamma(V))$ be a dense subset of $[L^2(G)]^k$. Then, for all $w\in W$
        \[
            w\in\Lambda \quad \Leftrightarrow \quad     \L(w)\in\Lambda_\rho.
        \]
    \end{lemma}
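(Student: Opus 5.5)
The direction $\Leftarrow$ does not use the density assumption and follows directly from \eqref{eq: j-mu geq 0}. Let $w\in W$ with $\L(w)\in\Lambda_\rho$ and let $v\in V$. By \eqref{eq: Definition of B},
\[
\langle \B w,\gamma(v)\rangle_W=\bigl(\L(w),\L(\gamma(v))\bigr)_{0,G}\le \bigl(\rho,|\L(w)|\,|\L(\gamma(v))|\bigr)_{0,G}\le j(v),
\]
using the pointwise Cauchy--Schwarz inequality and $|\L(w)|\le\rho$. Hence $w\in\Lambda$.

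For $\Rightarrow$, I would argue by duality. Let $w\in\Lambda$ and set $\tau:=\L(w)\in[L^2(G)]^k$. The defining inequality of $\Lambda$ says
\[
\bigl(\tau,\L(\gamma(v))\bigr)_{0,G}\le \bigl(\rho,|\L(\gamma(v))|\bigr)_{0,G}\qquad\text{for all } v\in V.
\]
The first step extends this from the image $\L(\gamma(V))$ to all of $[L^2(G)]^k$. Both sides are continuous in $\L(\gamma(v))$ with respect to the $L^2$-norm. The left side is clearly continuous. For the right side, $\rho\in L^\infty(G)\subset L^2(G)$ because $G$ is bounded, and
\[
\bigl|(\rho,|\phi|)-(\rho,|\psi|)\bigr|\le\norm{\rho}_{0,G}\norm{\phi-\psi}_{0,G}.
\]
Combined with the density of $\L(\gamma(V))$, this gives
\[
(\tau,\phi)_{0,G}\le(\rho,|\phi|)_{0,G}\qquad\text{for all }\phi\in[L^2(G)]^k.
\]

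The second step deduces $|\tau|\le\rho$ almost everywhere by a suitable test function. I would take $\phi:=\chi_M\,\tau/|\tau|$, with $\phi:=0$ where $\tau=0$, on the set $M:=\{x\in G: |\tau(x)|>\rho(x)\}$. This $\phi$ lies in $[L^\infty(G)]^k\subset[L^2(G)]^k$. Substituting it gives
\[
\int_M \bigl(|\tau|-\rho\bigr)\,dx\le 0 .
\]
Since the integrand is strictly positive on $M$, $M$ must be a null set. Therefore $\tau\in\Lambda_\rho$.

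The main point requiring care is the density extension: it needs the continuity of the nonlinear map $\phi\mapsto(\rho,|\phi|)_{0,G}$ on $L^2$, which holds because $\rho\in L^2(G)$. The second step also needs $M$ to be measurable, which it is since $\tau$ and $\rho$ are measurable. The remaining steps are routine.
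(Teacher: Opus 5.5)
Your proof is correct and follows essentially the paper's route. The paper likewise extends the defining inequality of $\Lambda$ by density to all of $[L^2(G)]^k$ (phrased via an auxiliary set $\Lambda_L$ and argued by contradiction instead of by passing to the limit), and then tests with $\chi_{N_w}\L(w)/|\L(w)|$. One small slip: in your $\Leftarrow$ chain the middle term should be $\bigl(|\L(w)|,|\L(\gamma(v))|\bigr)_{0,G}$, not $\bigl(\rho,|\L(w)|\,|\L(\gamma(v))|\bigr)_{0,G}$.
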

    \begin{proof}
         Define
        \begin{equation*}
            \Lambda_L:=\{w\in W: (\L(w),z)_{0,G}\leq (\rho,|z|)_{0,G}\;\text{ for all }\; z\in [L^2(G)]^k\}
        \end{equation*}
        and let $w\in\Lambda_L$ and $v\in V$. Then,
        \[
            \langle Bw,\gamma(v)\rangle_W=(\L(w),\L(\gamma(v))_{0,G}\leq (\rho,|\L(\gamma(v))|)_{0,G}=j(v)
        \]
        which shows $w\in \Lambda$ and, thus, $\Lambda_L\subset\Lambda$. Let $w\in \Lambda$ and assume that $w\not\in \Lambda_L$. Then, there exists a $z\in [L^2(G)]^k$ and an $\varepsilon>0$ such that 
                \begin{equation}\label{proof: epsilon lower bound}
                    \varepsilon < (\L(w),z)_{0,G}-(\rho,|z|)_{0,G}.
                \end{equation}
                Furthermore, as $\L(\gamma(V))$ is dense in $[L^2(G)]^k$ there exists a $v\in V$ such that 
                \begin{equation*}
                    \norm{\L(\gamma(v))-z}_{0,G}<\varepsilon(C_L\norm{w}_{W}+\norm{\rho}_{0,G})^{-1}.
                \end{equation*}
                Therefore, as $w\in \Lambda$ and by reverse triangle inequality we have
                \begin{equation*}
                \aligned
                    (\L(w),z)_{0,G}-(\rho,|z|)_{0,G}&=(\L(w),z-\L(\gamma(v)))_{0,G}+(\L(w),\L(\gamma(v)))_{0,G}-(\rho,|z|)_{0,G}\\
                    &\leq (\L(w),z-\L(\gamma(v)))_{0,G}+(\rho,|\L(\gamma(v))|-|z|)_{0,G}\\
                    &\leq C_L\norm{w}_W\norm{\L(\gamma(v))-z}_{0,G}+(\rho,|\L(\gamma(v))-z|)_{0,G}\\
                    &\leq (C_L\norm{w}_{W}+\norm{\rho}_{0,G})\;\norm{\L(\gamma(v))-z}_{0,G}\\
                    &<\varepsilon,
                    \endaligned
                \end{equation*}
                which is a contradiction to \eqref{proof: epsilon lower bound}. Thus, $w\in \Lambda_L$ and $\Lambda=\Lambda_L$. Next, assume that $\L(w)\in \Lambda_\rho$. For $z\in [L^2(G)]^k$ we have
                \begin{equation*}
                    (\L(w),z)_{0,G}\leq (|\L(w)|,|z|)_{0,G}\leq (\rho,|z|)_{0,G},
                \end{equation*}
                which shows that $w\in \Lambda_L$. Now, let $w\in \Lambda_L$ with $N_w:=\{x\in G: |\L(w)|> \rho\}$ and set 
                \begin{equation*}
                    v(x):=\begin{cases}  \frac{\L(w(x))}{|\L(w(x))|}, \qquad &x\in N_w\\ 0, \qquad \qquad &x\in G\setminus N_w\end{cases}.
                \end{equation*}
                Clearly, $v \in [L^2(G)]^k$ and
                \begin{equation*}
                    0\leq (\rho,|v|)_{0,G}-(\L(w),v)_{0,G}
                     =\int\limits_{N_w}\rho|v|- \L(w)^\top v\,ds_x
                     =\int\limits_{N_w} \underbrace{\rho- |\L(w)|}_{<0}\,ds_x
                \end{equation*}
                which implies that $N_w$ has zero measure. This gives $\L(w)\in \Lambda_\rho$ and, thus, completes the proof.
    \end{proof}
    }
     It follows from Theorem~\ref{effThm} that the error squared and the error itself are needed to bound the error estimator squared from above.  Thus, Theorem~\ref{Vorstufe Rel und Eff} and Theorem~\ref{effThm} imply that the convergence rate of the error estimator is not more than the convergence rate of the approximation error and not less than half of that convergence rate. {In certain applications, see e.g.~Section~\ref{sec: idealized friction}, leaving the term $\norm{\L(\gamma(u-\tilde{u}))}_{0,G}$ in the proof of Theorem~\ref{effThm} as it is and not further estimating it with \eqref{eq: Continuity of gamma and L} can lead to a better efficiency estimate.} The numerical experiments in Section~\ref{sec: numerics} suggest that the efficiency estimate  in Theorem~\ref{effThm} may be improvable, which at least
can be achieved under an additional (quite restrictive) assumption, as the following statement shows.
    \begin{corollary}
        Let $\L(\tilde{\lambda})\in \Lambda_\rho$ and
            \begin{equation}\label{eq: kappa efficiency condition}
                (\L(\tilde{\lambda}),\L(\gamma(\tilde{u}))_{0,G} = j(\tilde{u}).
            \end{equation}
        Then, there exist $C_R, C_E>0$ such that
        \begin{equation*}
            \norm{u-\tilde{u}}_V^2+\norm{{\lambda}-{\tilde{\lambda}}}_{W}^2 \leq C_R\Bigl(\hat{\eta}^2+\operatorname{osc}^2\Bigr)
        \end{equation*}
        and
        \begin{equation*}
            C_E\,\hat{\eta}^2\leq \norm{u-\tilde{u}}_V^2 + \norm{{\lambda}-{\tilde{\lambda}}}_{W}^2 + \operatorname{osc}^2.
        \end{equation*}
    \end{corollary}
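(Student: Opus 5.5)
The plan is to show that, under the two assumptions, the consistency functional can be made to vanish by choosing $\theta=\L(\tilde{\lambda})$ in Theorem~\ref{thm: Weighted functional First estimation}. Both estimates then reduce to statements about the residual $\norm{\A \tilde{u}-\lin+\B\tilde{\lambda}\circ\gamma}_{V^*}$, which is controlled by \eqref{Hilfschaetzer}.

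\emph{Step 1: $\E$ vanishes.} Since $\L(\tilde{\lambda})\in\Lambda_\rho$, the choice $\theta:=\L(\tilde{\lambda})$ is admissible in Theorem~\ref{thm: Weighted functional First estimation}. For this $\theta$ we have
\[
\E(\L(\tilde{\lambda});\tilde{u},\tilde{\lambda}) = \norm{\L(\tilde{\lambda})-\L(\tilde{\lambda})}_{0,G}^2 + j(\tilde{u}) - \bigl(\L(\tilde{\lambda}),\L(\gamma(\tilde{u}))\bigr)_{0,G}.
\]
The first term is zero trivially, and the remaining difference is zero by \eqref{eq: kappa efficiency condition}.

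\emph{Step 2: reliability.} Theorem~\ref{thm: Weighted functional First estimation} then gives
\[
\norm{u-\tilde{u}}_V^2+\norm{\lambda-\tilde{\lambda}}_W^2\leq C_{\text{Res}}\norm{\A \tilde{u}-\lin+\B\tilde{\lambda}\circ\gamma}_{V^*}^2 .
\]
Applying \eqref{Hilfsschaetzer Zuverlässig} yields the first bound with $C_R:=C_{\text{Res}}\widehat{C}_R$. In passing, since $\theta^*$ minimizes $\E(\cdot;\tilde{u},\tilde{\lambda})$ over $\Lambda_\rho$ and $\E\geq 0$ on $\Lambda_\rho$ by \eqref{eq: j-mu geq 0}, we also get $\E(\theta^*;\tilde{u},\tilde{\lambda})=0$ and hence $\eta=\hat{\eta}$.

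\emph{Step 3: efficiency.} By \eqref{Hilfsschaetzer Effizienz}, $\widehat{C}_E\hat{\eta}^2$ is bounded by the squared residual plus $\operatorname{osc}^2$. As in the proof of Theorem~\ref{effThm}, inserting \eqref{Mixed Formulation Gleichung} into the residual gives
\[
\norm{\A \tilde{u}-\lin+\B\tilde{\lambda}\circ\gamma}_{V^*}\leq C_A\norm{u-\tilde{u}}_V+C_BC_\gamma\norm{\lambda-\tilde{\lambda}}_W .
\]
Squaring with $(a+b)^2\leq 2a^2+2b^2$ yields the second bound with
\[
C_E^{-1}:=\max\bigl\{2C_A^2\widehat{C}_E^{-1},\,2C_B^2C_\gamma^2\widehat{C}_E^{-1},\,\widehat{C}_E^{-1}\bigr\}.
\]

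\emph{Main obstacle.} There is no real obstacle. The only point needing care is Step 1: checking that $\theta=\L(\tilde{\lambda})$ is admissible and annihilates $\E$ exactly. This is precisely where both hypotheses enter, and it is why the non-squared term $\norm{u-\tilde{u}}_V$ from Theorem~\ref{effThm} disappears. Note also that this argument does not require $\L(\lambda)\in\Lambda_\rho$.
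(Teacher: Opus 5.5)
Your proof is correct and follows the paper's argument. The paper likewise notes that $\E(\L(\tilde{\lambda});\tilde{u},\tilde{\lambda})=0$, hence $\eta=\hat{\eta}$, and then reuses the arguments of Theorems~\ref{Vorstufe Rel und Eff} and~\ref{effThm}; your explicit efficiency step adds the useful observation that only the efficiency assumption on $\hat{\eta}$ and the residual bound are needed, so the hypothesis $\L(\lambda)\in\Lambda_\rho$ of Theorem~\ref{effThm} is indeed not required here.
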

    \begin{proof}
         By \eqref{eq: kappa efficiency condition} it holds that $\bm{E}(\L({\tilde{\lambda}});\tilde{u},\tilde{\lambda})=0$. Thus, $\eta = \hat\eta$. Therefore, by the arguments of Theorems~\ref{Vorstufe Rel und Eff} and \ref{effThm} we get the desired estimates.
    \end{proof}
    \begin{remark}
       We emphasize that there are cases where \eqref{eq: kappa efficiency condition} is satisfied. For instance, in \cite{Bammer2025e} an elastoplastic problem with linearly kinematic hardening is analysed, which falls into the abstract framework. Using the notation of \cite{Bammer2025e} $\gamma(\cdot)$ is defined as the projection $u:=(\mathfrak{u},\bm{p})\mapsto\bm{p}$, where the tuple consists of the displacement field $\mathfrak{u}$ and the plastic strain $\bm{p}$, and $\L(\cdot)$ is defined as the identity operator, mapping the vector space of plastic strains $Q\subset[L^2(\Omega)]^{d^2}\simeq[L^2(\Omega)]^{d\times d}$ onto itself. In \cite[Rem.~2]{Bammer2025e} it is specified under which conditions \eqref{eq: kappa efficiency condition} is fulfilled. A further example in a similar setting can be found in \cite[Sec.~4]{Schroder2011}.
    \end{remark}

    Up to now $\tilde{u}$ and $\tilde{\lambda}$ are arbitrarily given in $V$ and $W$, respectively. Clearly, an error estimation is only meaningful if $\tilde{u}$ and $\tilde{\lambda}$ result from a discretization scheme. For instance, we may choose $(\tilde{u},\tilde{\lambda})$ as $(u_N,\lambda_N) \in V_N \times \Lambda_N \subset V \times W$ such that
    \begin{subequations}
            \begin{align}
                \langle \bm{A}u_N,v_N\rangle_V &= \langle \bm{\ell}, v_N\rangle_V - \langle\B \lambda_N, \gamma(v_N)\rangle_W, \label{eq:abstract:discreteMixed1}\\
                \langle \B(\mu_N&-\lambda_N),\gamma(u_N)\rangle_W \leq 0 
            \end{align}
    \end{subequations}
   for all $(v_N,\mu_N) \in V_N \times \Lambda_N$, where 
$V_N$ is a finite dimensional subset of $V$ and $\Lambda_N$ is a non-empty, closed, convex subset of a finite dimensional set $W_N \subset W$ (and a meaningful replacement for $\Lambda$). Of course, $(\tilde{u},\tilde{\lambda})$ could also be some solver iterates $(u_N^{(i)},\lambda_N^{(i)}) \in V_N \times W_N$ that converges towards $(u_N,\lambda_N)$. Alternatively, $\tilde{u} \in V_N$ is the (iterative) solution of a discrete analogue of the variational inequality \eqref{Variational Inequality}. In this case, we may compute some $\tilde{\lambda} \in W$ in an additional post-processing step such that
\begin{align} \label{eq:abstract:postprocessingLambda}
        \langle \bm{A}\tilde{u},v_N\rangle_V = \langle \bm{\ell},v_N\rangle_V-\langle\B{\tilde{\lambda}}, \gamma(v_N)\rangle_W
 \end{align}
 for all $v_N \in V_N$.
We emphasize that \eqref{eq:abstract:discreteMixed1} implies \eqref{eq:abstract:postprocessingLambda}. Moreover, \eqref{eq:abstract:postprocessingLambda} implies that $\tilde{u}$ becomes the Galerkin approximation of $\hat{u}$ that solves the auxiliary problem \eqref{Auxiliary Problem}. The resulting Galerkin orthogonality $\langle A(\hat{u}-\tilde{u}),v_N \rangle_V =0$ for all $v_N \in V_N$ enables the construction of an efficient and reliable error estimator $\hat{\eta}$ for $\norm{\hat{u}-\tilde{u}}_V$.
The detour via $\hat{u}$ is not necessary as \eqref{eq:abstract:postprocessingLambda} also implies
$$ \norm{\A \tilde{u}-\bm{\ell} -\B{\tilde{\lambda}}\circ\gamma}_{V^*} = \sup_{v\in V\setminus\{0\}} \frac{\langle \A \tilde{u}-\bm{\ell} -\B{\tilde{\lambda}}\circ\gamma,v\rangle_V}{\norm{v}_V} = \sup_{v\in V\setminus\{0\}} \frac{\langle \A \tilde{u}-\bm{\ell} -\B{\tilde{\lambda}}\circ\gamma,v-v_N\rangle_V}{\norm{v}_V}$$
for any $v_N \in V_N$, which may be helpful in directly estimating or computing the dual norm with some localizable and implementable quantities.
We also emphasize that \eqref{eq:abstract:postprocessingLambda} is an underdetermined problem, which gives us flexibility to choose $\tilde{\lambda}$ in a suitable large discretization space, so that, for instance, $\tilde{\lambda}$ can be computed by solving some local problems as proposed in \cite[Sec.~5]{Burg2015}. Of course different choices for $\tilde{\lambda}$ lead to different values of the error estimator but also to a different discretization error in particular of $\norm{\lambda-\tilde{\lambda}}_W$.


\section{Applications of the a Posteriori Error Estimates}\label{sec: application}
    
To elucidate the theoretical results, we discuss two variational inequality problems, their $hp$-finite element discretization, and derive a posteriori error estimates for these. For this purpose, we first consider an idealized frictional problem as discussed in \cite{Schroder2011a} and later a Mosolov model problem, a scalar variant of the Bingham problem, as considered in \cite{Banz2022}. We note that in both cases  the proposed a posteriori error estimators resulting from the abstract framework coincide with the a posteriori error estimators proposed in these articles. Deriving the upper and lower estimates with the abstract framework of Section~\ref{sec: A Posteriori} is, however, considerably streamlined.

    \subsection{Common Setup}
    
        Let $\Omega\subset \R^2$ be a bounded, polygonal Lipschitz domain with boundary $\Gamma:=\partial\Omega$.
        The Dirichlet boundary part is denoted by $\Gamma_D\subseteq \Gamma$, is closed and has positive measure.
        We denote the contact boundary part by $\Gamma_C\subset \Gamma\setminus\Gamma_D$ and assume that $\overline{\Gamma}_C \cap \Gamma_D = \emptyset$ allowing us to work with standard fractional order Sobolev spaces. Moreover, the Neumann boundary part is given by $\Gamma_N:=\Gamma\setminus(\Gamma_D\cup\overline{\Gamma}_C)$.     
        Let $H^1(\Omega)$ and $H^{1/2}(\Gamma_C)$ denote the usual Sobolev spaces, for which we denote their norms by $\norm{\cdot}_{1,\Omega}$ and $\norm{\cdot}_{1/2,\Gamma_C}$, respectively.  
        Furthermore, let
        \begin{equation*}
        \aligned
            H^1_0(\Omega)&:=\{v\in H^1(\Omega): \gamma_\Gamma(v)=0 \;\text{ on }\;\Gamma\},\\
            H^1_D(\Omega)&:=\{v\in H^1(\Omega): \gamma_\Gamma(v)=0 \;\text{ on }\;\Gamma_D\},
            \endaligned
        \end{equation*}
        where $\gamma_\Gamma:H^1(\Omega)\to H^{1/2}(\Gamma)$ is the usual trace operator. We note that the operator
        \begin{equation*}
            \gamma_C:=(\gamma_\Gamma)_{|_{\Gamma_C}}:H^1_D(\Omega)\to H^{1/2}(\Gamma_C)\subsetneq L^2(\Gamma_C)
        \end{equation*}
        is linear, bounded and surjective due to the properties of $\Gamma_C$, see e.g.~\cite{Kikuchi1988a}. Furthermore, the space $H^{1/2}(\Gamma_C)$ is dense in $L^2(\Gamma_C)$.
 The restriction to scalar two dimensional problems is just for notational convenience.
 
     \subsection{Discretization with \texorpdfstring{$hp$}{hp}-Finite Elements}\label{Discretization}
            
            We denote by $\T_h$ the locally quasi-uniform and $\varrho$-shape regular triangulation of $\Omega$ into parallelograms. The restriction to parallelograms is only needed for the analysis of the arbitrarily chosen classical residual error estimator for variational equations that is used to estimate $\norm{\bm{A}\tilde{u}-\bm{\ell}+\B{\tilde{\lambda}}\circ\gamma}_{V^*}$. We also assume that  all corners of $\Gamma$ and the transition points between $\Gamma_D$, $\Gamma_C$ and $\Gamma_N$ are nodes of $\T_h$.
            Furthermore, let $\widehat{Q}:=[-1,1]^2$ be the reference element and let $F_Q:\widehat{Q}\to Q$ be an affine and bijective mapping from the reference element onto an element $Q\in\T_h$. We write $h_Q:=\operatorname{diam}(Q)$ for the local element size and set $h:=(h_Q)_{Q\in\T_h}$. Moreover, let $p_Q\geq 1$ denote the local polynomial degree and set $p:=(p_Q)_{Q\in\T_h}$. We assume the polynomial degrees of neighbouring elements to be comparable.
            For more details on such a decomposition, see e.g.~\cite{Melenk2001a}.\\
            Let $\widehat{E}:=[-1,1]$ be the reference interval and let $F_E:\widehat{E}\to E$ be an affine and bijective mapping from the reference interval onto an edge $E\in\mathcal{E}_h$, where $\mathcal{E}_h$ denotes the set of edges of $\T_h$.
            We denote by $\mathcal{E}_I$ the set of interior edges of $\T_h$ and by $\mathcal{E}_C$ and $\mathcal{E}_N$ the sets of edges belonging to the contact, Neumann boundary, respectively. Moreover, let $\mathcal{E}_Q$ be the set of edges belonging to an element $Q\in \T_h$ and define $h_E:=\operatorname{diam}(E)$ as well as $p_E:=\min\limits_{Q\in \T_h}\{p_Q: E\in \mathcal{E}_Q\}$.
            In the subsequent we use the polynomial space
$      \mathbb{P}_p(\widehat{Q}):=\operatorname{span}\big\{x_1^{k_1}x_2^{k_2}\;|\;0 \leq k_1, k_2 \leq p\} $
            and the discretization spaces
            \begin{equation*}
            \aligned
                V_{hp}&:=\{v_{hp}\in H_0^1(\Omega)\;:\; v_{hp}|_Q\circ F_Q\in \mathbb{P}_{p_Q}(\widehat{Q})\;\text{ for all }\; Q\in\mathcal{T}_h\},\\
                V_{hp}^D&:=\{v_{hp}\in H_D^1(\Omega)\;:\; v_{hp}|_Q\circ F_Q\in \mathbb{P}_{p_Q}(\widehat{Q})\;\text{ for all }\; Q\in\mathcal{T}_h\}.
                \endaligned
            \end{equation*}
            Additionally, we denote by $v^{p_Q}$ the $L^2(Q)$-projection of $v|_Q$ for some $v \in L^2(\Omega)$ onto $\{v_{hp} \in L^2(Q): v_{hp}\circ F_Q\in \mathbb{P}_{p_Q-1}(\widehat{Q}) \}$. Alike, $v^{p_E}$ is the $L^2(E)$-projection of $v|_E$ for some $v \in L^2(\Gamma)$ onto $\{v_{hp} \in L^2(E): v_{hp}\circ F_E\in \mathbb{P}_{p_E-1}(\widehat{E}) \}$.            
            Furthermore, $\llbracket \cdot \rrbracket$ denotes the usual jump function across an edge, $n_E$ denotes a unit normal of the edge $E$ that is outwards for $E \subset \Gamma$ and let the patch $\omega_Q$ be given by $\omega_Q:=\{Q'\in\T_h: \mathcal{E}_Q\cap\mathcal{E}_{Q'}\neq\emptyset\}$.\\
            We emphasize that in the following, the function $\tilde{u}_{hp}\in V_{hp}$ or $\tilde{u}_{hp}\in V_{hp}^D$ is arbitrary and takes the role of $\tilde{u}$ from Section~\ref{sec: A Posteriori}.
            \subsection{An Idealized Frictional Problem}\label{sec: idealized friction}

            The first problem is an idealized frictional problem, which is to find a function $u:\Omega \rightarrow \mathbb{R}$ such that 
            \begin{equation*}
            \aligned
                -\Delta u&= f\quad\text{ in }\quad\Omega,\\
                  u &= 0\quad\text{ on }\quad \Gamma_D,\\
                \partial_nu &= g\quad\text{ on }\quad \Gamma_N,\\
                |\partial_nu|\leq \rho \quad \text{ and} \quad \partial_n u \cdot u & = - \rho |u| \quad \text{ on }\quad \Gamma_C
                \endaligned
            \end{equation*}
            for given $f\in L^2(\Omega)$, $g\in L^2(\Gamma_N)$ and $\rho\in L^{\infty}_+(\Gamma_C)$. Its variational formulation is to find a $u\in H^1_D(\Omega)$ such that
            \begin{equation}\label{eq: Idealized Friction Inequality}
                \int\limits_{\Omega}\nabla u\,\nabla(v-u)\,dx + \int\limits_{\Gamma_C}\rho\,|\gamma_C(v)|\,ds - \int\limits_{\Gamma_C}\rho\,|\gamma_C(u)|\,ds \geq \int\limits_{\Omega}f\,(v-u)\,dx +\int\limits_{\Gamma_N}g\,\gamma_\Gamma(v-u)\,ds
            \end{equation}
            for all $v\in H^1_D(\Omega)$; see e.g.~\cite[Ch.~2]{Hlavacek1988a}.
            To establish its connection to the abstract results we set $V:=H^1_D(\Omega)$ and $\A$, $\lin$ such that
            \begin{equation*}
                \langle \A v,w\rangle_V=(\nabla v,\nabla w)_{0,\Omega},\quad \langle \lin ,v\rangle_V=(f,v)_{0,\Omega}+(g,\gamma_\Gamma(v))_{0,\Gamma_N}
            \end{equation*}
            for all $v,w\in H^1_D(\Omega)$, respectively. Furthermore, we set $G:=\Gamma_C$ and $W:= L^2(\Gamma_C)$ and let $\L:=\operatorname{id}:L^2(\Gamma_C)\to L^2(\Gamma_C)$. For the operator $\gamma:=\gamma_C$ the condition \eqref{eq: Continuity of gamma and L} is fulfilled and we note that due to the surjectivity of $\gamma_C$ it holds that $\gamma_C(V)=H^{1/2}(\Gamma_C)$ which is dense in $L^2 (\Gamma_C)$. As $W = L^2(\Gamma_C)$ the operator $\L$ clearly has the properties \eqref{eq: Continuity of gamma and L} and \eqref{eq: Lowerbound for L}.
            Finally, we define
            \begin{equation*}
                j(v):= (\rho,|\gamma_C(v)|)_{0,\Gamma_C}.
            \end{equation*}
            As $\A:H^1_D(\Omega)\to \left(H^1_D(\Omega)\right)^*$ is continuous and $H^1_D(\Omega)$-elliptic, the inequality \eqref{eq: Idealized Friction Inequality} is of form \eqref{Variational Inequality} and, thus, has a unique solution $u$.

            To give a mixed formulation of the problem, we define the operator $\B:L^2(\Gamma_C)\to L^2(\Gamma_C)$ by
            \begin{equation*}
                \langle\B w, z\rangle_W=(w,z)_{0,\Gamma_C}
            \end{equation*}
            for all $w,z \in L^2(\Gamma_C)$, i.e.~$\B=\operatorname{id}$ and set 
            \begin{equation*}
                \Lambda:=\{w \in L^2(\Gamma_C): (w,\gamma_C(v))_{0,\Gamma_C}\leq j(v)\;\text{ for all }\; v\in H^1_D(\Omega)\}.
            \end{equation*}
            Since, $\L(\gamma(V))=H^{1/2}(\Gamma_C)$ is dense in $L^2(\Gamma_C)$ it holds by Lemma~\ref{lem: Lambda weak strong equivalence} that
            \begin{equation}\label{lem: Equivalent Lambda}
                    \Lambda =\Lambda_\rho:=\{\tau \in L^2(\Gamma_C): |\tau |\leq \rho\}.
                \end{equation}
            Thus, we arrive at the mixed formulation of the idealized frictional problem: Find $u\in H^1_D(\Omega)$ and $\lambda \in \Lambda$ such that
             \begin{subequations}\label{eq: Friction Mixed Formulation}
            \begin{align}
                (\nabla u, \nabla v)_{0,\Omega} &= (f,v)_{0,\Omega}+(g,\gamma(v))_{0,\Gamma_N}-(\lambda,\gamma_C(v))_{0,\Gamma_C}, \label{eq: Friction Mixed Formulation Teil1}  \\
                (\mu&-\lambda,\gamma_C(u))_{0,\Gamma_C}\leq 0
            \end{align}
            \end{subequations}
            for all $(v,\mu)\in H^1_D(\Omega) \times \Lambda$. By Theorem~\ref{Existenz Gemischte Formulierung} this problem has a unique solution $(u,\lambda)\in H^1_D(\Omega)\times \Lambda$, which is equivalent to solving to \eqref{eq: Idealized Friction Inequality}. Obviously, it holds $\lambda = -\partial_n u$ on $\Gamma_C$ in a weak sense.
            %

        \subsubsection{A Posteriori Error Estimation} \label{sec:fric:apost}

            Recall that $\tilde{u}_{hp}\in V_{hp}^D$ is an arbitrary element.
            We choose $\tilde{\lambda} \in L^2(\Gamma_C)$ such that 
            \begin{equation} \label{eq:fric:kappa_postprocessing}
                (\tilde{\lambda},\gamma_C(v_{hp}))_{0,\Gamma_C} = (f,v_{hp})_{0,\Omega}+(g,\gamma(v_{hp}))_{0,\Gamma_N}-(\nabla \tilde{u}_{hp}, \nabla v_{hp})_{0,\Omega}
            \end{equation}
            for all $v_{hp}\in V_{hp}^D$, cf.~\eqref{eq:abstract:postprocessingLambda}. One computationally cheap possibility is to determine $\tilde{\lambda}$ as a discontinuous finite element function by solving local problems as described in \cite[Sec.~5]{Burg2015}. For another cheap possibility we refer to the description of the numerical experiments in Section~\ref{sec:numeric:fric}.
        We emphasize that  the restriction given by \eqref{eq:fric:kappa_postprocessing} for choosing $\tilde{\lambda}$ (instead of just choosing $\tilde{\lambda}$ arbitrarily) enables the use of the classical residual error estimator $\hat{\eta}$ for variational equations,  see the auxiliary problem \eqref{Auxiliary Problem} and the discussion at the end of Section~\ref{sec: A Posteriori}.
        Here, \eqref{Auxiliary Problem} is to find $\hat{u}\in V$ such that
            \begin{equation} \label{eq:fric:Aux_prob}
                (\nabla \hat{u},\nabla v)_{0,\Omega} = (f,v)_{0,\Omega}+(g,\gamma(v))_{0,\Gamma_N}-(\tilde{\lambda},\gamma_C(v))_{0,\Gamma_C}
            \end{equation}
           for all $v\in V$. Since $\tilde{\lambda}$ solves \eqref{eq:fric:kappa_postprocessing}, $\tilde{u}_{hp}$ is the discrete Galerkin solution of the auxiliary problem \eqref{eq:fric:Aux_prob}.  Hence, we can simply look up the corresponding residual a posteriori error estimator  to specify $\hat{\eta}$.
            Let the local error contribution $\hat\eta_Q^2$ with $Q\in \T_h$ be given by
            \begin{equation*}
            \aligned
                \hat\eta_Q^2&:= \frac{h_Q^2}{p_Q^2}\norm{f^{p_Q}+\Delta \tilde{u}_{hp}}_{0,Q}^2 + \sum\limits_{E\in \mathcal{E}_I\cap\mathcal{E}_Q}\frac{h_E}{2p_E}\norm{\llbracket \partial_{n_E}\tilde{u}_{hp}\rrbracket}_{0,E}^2 +  \sum\limits_{E\in \mathcal{E}_C\cap\mathcal{E}_Q} \frac{h_E}{p_E}\norm{\partial_{n_E}\tilde{u}_{hp}+\tilde{\lambda}^{p_E}}_{0,E}^2 \\
                &\qquad + \sum\limits_{E\in\mathcal{E}_N\cap\mathcal{E}_Q}\frac{h_E}{p_E}\norm{g^{p_E}-\partial_{n_E}\tilde{u}_{hp}}_{0,E}^2.
              \endaligned  
            \end{equation*}
            with the element-, edge-wise $L^2$-projections $f^{p_Q}$, $g^{p_E}$, $\tilde{\lambda}^{p_E}$ of the data $f$, $g$, $\tilde{\lambda}$, respectively. 
            The (data) oscillation term $\operatorname{osc}^2:=\sum\limits_{Q\in\T_h}\operatorname{osc}_Q^2$ is defined by the sum of the local terms
            \begin{equation*}
                \operatorname{osc}_Q^2:= \frac{h_Q^2}{p_Q^2}\norm{f-f^{p_Q}}_{0,Q}^2+\sum\limits_{E\in \mathcal{E}_C\cap\mathcal{E}_Q}\frac{h_E}{p_E}\norm{\tilde{\lambda}-\tilde{\lambda}^{p_E}}_{0,E}^2 + \sum\limits_{E\in\mathcal{E}_N\cap\mathcal{E}_Q}\frac{h_E}{p_E}\norm{g-g^{p_E}}_{0,E}^2.
            \end{equation*}
            Therewith, the total error estimator becomes 
            \begin{equation*} 
                \eta^2:=\hat{\eta}^2+\bm{E}(\theta^*;\tilde{u}_{hp},\tilde{\lambda})
            \end{equation*}     
            with
            \begin{equation*}
                \hat\eta^2:=\sum\limits_{Q\in \T_h}\hat\eta_Q^2 \quad \text{and} \quad \bm{E}(\theta^*;\tilde{u}_{hp},\tilde{\lambda}):=\norm{\theta^*-\tilde{\lambda}}_{0,\Gamma_C}^2+ j(\tilde{u}_{hp})-(\theta^*,\gamma_C(\tilde{u}_{hp}))_{0,\Gamma_C}
            \end{equation*}
            and
            \begin{equation*}
                \theta^*:=\rho\frac{\tilde{\lambda}+\frac12\gamma_C(\tilde{u}_{hp})}{\max\{\rho,|\tilde{\lambda}+\frac12\gamma_C(\tilde{u}_{hp})|\}},
            \end{equation*}
            see \eqref{eq:optimal_Prop_theta_star} and \eqref{eq:optimal_theta_star}. Note that by Remark~\ref{rem:subsetMinimizer}, $\eta^2$ is also localizable, since it holds
            $$ \eta^2 = \sum\limits_{Q\in \T_h} \eta^2_Q \quad \text{with} \quad \eta^2_Q:=  \hat\eta_Q^2 + \sum\limits_{E\in \mathcal{E}_C\cap\mathcal{E}_Q} \E_E(\theta^*;\tilde{u}_{hp},\tilde{\lambda}), $$
            where
$$ \E_E(\theta^*;\tilde{u}_{hp},\tilde{\lambda}): = \int_E (\theta^*-\tilde{\lambda})^2 + \rho |\gamma_C(\tilde{u}_{hp})| - \theta^*\gamma_C(\tilde{u}_{hp}) \, ds. $$

 \begin{theorem}\label{thm: A posteriori Friction}
   There exists a constant $C_{\text{rel}}>0$ independent of $h$, $p$, $\tilde{u}_{hp}$ and $\tilde{\lambda}$ such that
  \begin{equation} \label{eq:fric:eta_reliable}
    \norm{u-\tilde{u}_{hp}}_{1,\Omega}^2+\norm{\lambda-\tilde{\lambda}}_{0,\Gamma_C}^2\leq C_{\text{rel}}\bigl[\eta^2+\operatorname{osc}^2\bigr].
\end{equation}
 For all $\varepsilon > 0$ there exists a constant $C_{\text{eff}}(\varepsilon)>0$ independent of $h$, $p$, $\tilde{u}_{hp}$ and $\tilde{\lambda}$ such that for every $Q \in \T_h$ there holds
    \begin{align}
        C_{\text{eff}}(\varepsilon)^{-1}\eta^2_Q & \leq   p_Q^{2(1+\varepsilon)} \norm{u-\tilde{u}_{hp}}^2_{1,\omega_Q} + \sum\limits_{E\in \mathcal{E}_C\cap\mathcal{E}_Q} \norm{\gamma_C(u-\tilde{u}_{hp})}_{0,E} + \left(1 + h_E p_E^{2\varepsilon} \right)\norm{\lambda - \tilde{\lambda}}^2_{0,E} \nonumber  \\
         & \quad  + \sum_{Q \in \omega_Q} p_Q^{1+4\varepsilon} \operatorname{osc}_Q^2. \label{eq:fric:effi} 
    \end{align}
 \end{theorem}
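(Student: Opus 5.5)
Reliability \eqref{eq:fric:eta_reliable} will follow from Theorem~\ref{Vorstufe Rel und Eff}. All that has to be checked is that $\hat\eta$ satisfies \eqref{Hilfsschaetzer Zuverlässig} with a constant independent of $h$, $p$, $\tilde u_{hp}$ and $\tilde\lambda$. By \eqref{eq:fric:kappa_postprocessing}, $\tilde u_{hp}$ is the Galerkin solution of the auxiliary problem \eqref{eq:fric:Aux_prob}, so the residual $\A\tilde u_{hp}-\lin+\B\tilde\lambda\circ\gamma$ vanishes on $V_{hp}^D$. We therefore write $\langle \A\tilde u_{hp}-\lin+\B\tilde\lambda\circ\gamma, v-I_{hp}v\rangle_V$ with a Clément/Melenk-type $hp$-quasi-interpolant $I_{hp}$ (see \cite{Melenk2001a}). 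Elementwise integration by parts then gives volume, interior jump, Neumann and contact edge residuals. The $hp$-approximation estimates
\[
\norm{v-I_{hp}v}_{0,Q}\lesssim \tfrac{h_Q}{p_Q}\norm{v}_{1,\omega_Q},\qquad
\norm{v-I_{hp}v}_{0,E}\lesssim \bigl(\tfrac{h_E}{p_E}\bigr)^{1/2}\norm{v}_{1,\omega_Q}
\]
together with splitting off the projections $f^{p_Q}$, $g^{p_E}$, $\tilde\lambda^{p_E}$ yield \eqref{Hilfsschaetzer Zuverlässig}, where the oscillation term is exactly $\operatorname{osc}$. Since $\norm{\cdot}_{1,\Omega}$ and $\norm{\cdot}_V$ are equivalent on $H^1_D(\Omega)$ and $W=L^2(\Gamma_C)$, Theorem~\ref{Vorstufe Rel und Eff} gives the reliability bound.

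For the local efficiency \eqref{eq:fric:effi}, we treat the two parts of $\eta_Q^2$ separately. The contact contribution $\E_E(\theta^*;\tilde u_{hp},\tilde\lambda)$ is handled by Corollary~\ref{cor:localEffiE} with $K=E$. This is allowed because \eqref{lem: Equivalent Lambda} (via Lemma~\ref{lem: Lambda weak strong equivalence}) gives $\L(\lambda)=\lambda\in\Lambda_\rho$. With $\L=\operatorname{id}$ this yields
\[
\E_E\le \norm{\lambda-\tilde\lambda}_{0,E}^2+2\norm{\rho}_{0,E}\norm{\gamma_C(u-\tilde u_{hp})}_{0,E},
\]
and $\norm{\rho}_{0,E}$ is bounded by $\norm{\rho}_{L^\infty}|\Gamma_C|^{1/2}$, which is independent of $h$ and $p$. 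Here we deliberately keep the trace term $\norm{\gamma_C(u-\tilde u_{hp})}_{0,E}$ unestimated, as the remark after Lemma~\ref{lem: Lambda weak strong equivalence} suggests.

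The main obstacle is the local efficiency of $\hat\eta_Q$ with the stated $p$-powers. We plan to use the standard $hp$ bubble-function technique of Melenk--Wohlmuth, with element bubbles $\Phi_Q^{\alpha}$ and edge bubbles $\Phi_E^{\alpha}$ together with their inverse estimates in $\alpha\in(1/2,1]$; choosing $\alpha$ in terms of $\varepsilon$ produces the factors $p^{2(1+\varepsilon)}$ and $p^{1+4\varepsilon}$.

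For the volume term, we test the error equation with $v=(f^{p_Q}+\Delta\tilde u_{hp})\Phi_Q^\alpha$. The error equation comes from subtracting \eqref{eq: Friction Mixed Formulation Teil1}:
\[
(\nabla(u-\tilde u_{hp}),\nabla v)_{0,\Omega}=(f,v)+(g,v)_{0,\Gamma_N}-(\lambda,v)_{0,\Gamma_C}-(\nabla\tilde u_{hp},\nabla v).
\]
This bounds $\tfrac{h_Q^2}{p_Q^2}\norm{f^{p_Q}+\Delta\tilde u_{hp}}_{0,Q}^2$ by $p_Q^{2(1+\varepsilon)}\norm{u-\tilde u_{hp}}_{1,Q}^2$ plus oscillation.

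Interior jumps and Neumann edges are treated analogously with edge bubbles extended to the patch $\omega_Q$, using the already proven volume bounds. On contact edges, the residual $\partial_{n_E}\tilde u_{hp}+\tilde\lambda^{p_E}$ is tested with an edge bubble. The error equation then contains $(\lambda-\tilde\lambda,v)_{0,E}$, which is controlled through the edge-bubble $L^2$ scaling $\norm{v}_{0,E}$ together with the factor $h_E/p_E$. This produces the term $(1+h_Ep_E^{2\varepsilon})\norm{\lambda-\tilde\lambda}_{0,E}^2$ after Young's inequality. The bookkeeping of the $\alpha$-dependent constants, which enter $C_{\text{eff}}(\varepsilon)$, is the delicate part.

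Summing the bounds for the volume, edge and contact contributions over $\mathcal E_Q$ gives \eqref{eq:fric:effi}.
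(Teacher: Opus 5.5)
Your proposal is correct and follows essentially the same route as the paper. Reliability comes from Theorem~\ref{Vorstufe Rel und Eff} once the $hp$-residual estimator is shown reliable for the residual dual norm; the paper phrases this through the auxiliary solution $\hat u$, you through Galerkin orthogonality and an $hp$-Clément interpolant, which is the same argument. Local efficiency comes from redoing the Melenk--Wohlmuth bubble-function proof with the exact mixed equation \eqref{eq: Friction Mixed Formulation Teil1} in place of the auxiliary problem, combined with Corollary~\ref{cor:localEffiE} on $K=E$ for $\E_E$ using $\Lambda=\Lambda_\rho$. Your split of the contact-edge term also matches the paper: $\hat\eta_Q$ contributes $h_Ep_E^{2\varepsilon}\norm{\lambda-\tilde\lambda}_{0,E}^2$ and $\E_E$ contributes $\norm{\lambda-\tilde\lambda}_{0,E}^2$ together with the unestimated trace term.
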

\begin{proof}
It is well known that the residual error estimator is reliable and analogously to \cite{Melenk2001a} we obtain
 \begin{equation*}
   \|\A \tilde{u}_{hp} - \lin + \B \tilde{\lambda}  \circ \gamma \|_{(H^1_D(\Omega))^*} \leq C_A  \norm{\hat{u}-\tilde{u}_{hp}}_{1,\Omega}^2\leq \hat{C}_R\hat\eta^2
 \end{equation*}
with some constant $\hat{C}_R>0$ independent of $h$, $p$, $\tilde{u}_{hp}$ and $\tilde{\lambda}$. Thus, the reliability estimate~\eqref{eq:fric:eta_reliable} follows with Theorem~\ref{Vorstufe Rel und Eff}.

Likewise, it is well known that $\hat{\eta}^2_Q$ is locally efficient to $\norm{\hat{u}-\tilde{u}_{hp}}_{1,\omega_Q}^2$. However, here it is better to redo the efficiency proof that can be found in \cite{Melenk2001a} and directly use \eqref{eq: Friction Mixed Formulation Teil1} instead of \eqref{eq:fric:Aux_prob}. Doing this we conclude that for every $\varepsilon>0$ there exists a constant $\hat{C}(\varepsilon)>0$ independent of $h$, $p$, $\tilde{u}_{hp}$ and $\tilde{\lambda}$ such that
\begin{align*}
 \eta_Q^2   &\leq \hat{C}(\varepsilon) p_Q^{1+2\varepsilon}\left[ p_Q \norm{u-\tilde{u}_{hp}}^2_{1,\omega_Q} + \sum\limits_{E\in \mathcal{E}_C\cap\mathcal{E}_Q} \frac{h_E}{p_E}\norm{\lambda - \tilde{\lambda}}^2_{0,E} \right. \\
 & \quad \left.+ p_Q^{2\varepsilon} \frac{h_Q^2}{p_Q^2}\norm{f-f^{p_Q}}^2_{0,\omega_Q} + \sum\limits_{E\in \mathcal{E}_N\cap\mathcal{E}_Q} \frac{h_E}{p_E}\norm{g - q^{p_E}}^2_{0,E} + \sum\limits_{E\in \mathcal{E}_C\cap\mathcal{E}_Q} \frac{h_E}{p_E}\norm{\tilde{\lambda} - \tilde{\lambda}^{p_E}}^2_{0,E}    \right].
\end{align*}
Together with 
\begin{equation*}
    \bm{E}_E(\theta^*; \tilde{u},\tilde{\lambda})
     \leq \norm{\lambda-\tilde{\lambda}}_{0,E}^2 +2 \norm{\rho}_{0,E}\norm{\gamma_C(u-\tilde{u}_{hp})}_{0,E}
 \end{equation*}
 by \eqref{lem: Equivalent Lambda}, $\L(\lambda) =  \lambda$ and Corollary~\ref{cor:localEffiE} this yields the local efficiency estimate \eqref{eq:fric:effi}.    
\end{proof}

As common for residual based a posteriori error estimators the guaranteed efficiency constant has a clear $p$-dependency. The low order term $\norm{\gamma_C(u-\tilde{u}_{hp})}_{0,E}$ in \eqref{eq:fric:effi} is undesirable but  is at least a weaker norm than $\norm{u-\tilde{u}_{hp}}_{1,\omega_Q}$. 
 We refer to Section~\ref{sec:numeric:fric} for a numerical validation of \eqref{eq:fric:effi}.
Clearly, summing over $Q\in \T_h$ the local efficiency estimate in Theorem~\ref{thm: A posteriori Friction} yields a global lower bound. {Due to not using the non-sharp trace estimate \eqref{eq: Continuity of gamma and L}, this results in a slightly better global efficiency estimate than a direct application of Theorem~\ref{effThm} would yield.}
           

    \subsection{A Mosolov Model Problem} \label{sec:Mosolov}

        The next problem we consider is a Mosolov model problem, a scalar variant of the Bingham problem. To that end let the functional $\bm{J}:H^1_0(\Omega)\to\R$ be 
         defined as
        \begin{equation}\label{eq:Mosolov:Energyfunctional}
            \bm{J}(v):= \frac12\int\limits_\Omega|\nabla v|^2\,dx+\int\limits_\Omega \rho|\nabla v|\,dx-\int\limits_\Omega fv\,dx
        \end{equation}
        with given friction coefficient $\rho\in L_+^\infty(\Omega)$ and data $f\in L^2(\Omega)$. The standard formulation of the Mosolov model problem is to find a $u\in H^1_0(\Omega)$ such that
        \begin{equation} \label{eq:Mosolov:Mini}
            \bm{J}(u)=\min\limits_{v\in H_0^1(\Omega)}\bm{J}(v).
        \end{equation}
        By \cite[Lem.~4.1]{Glowinski1984} this problem has a unique solution $u\in H^1_0(\Omega)$ and can be equivalently formulated as the following variational inequality: Find $u\in H^1_0(\Omega)$ such that
        \begin{equation}\label{eq: Bingham Variational Inequality}
            \int\limits_\Omega \nabla u \nabla(v-u)\,dx + \int\limits_\Omega \rho|\nabla v|\,dx-\int\limits_\Omega \rho|\nabla u|\,dx\geq \int\limits_\Omega f(v-u)\,dx
        \end{equation}
        for all $v\in H^1_0(\Omega)$.
        Similar to the idealized frictional problem case we may set $ V:= H^1_0(\Omega)$ and $\A$ and $\lin$ such that
        \begin{equation*}
                \langle \A v,w\rangle_V=(\nabla v,\nabla w)_{0,\Omega},\quad \langle \lin ,v\rangle_V=(f,v)_{0,\Omega}
        \end{equation*}
        for any $v,w\in H^1_0(\Omega)$. We further set $G:=\Omega$, $W:=H^1_0(\Omega)=V$ and $\gamma:=\operatorname{id}:V\to V$, which clearly fulfils \eqref{eq: Continuity of gamma and L} and that $\gamma(V)=V$ is dense in $V$. Next, we define $\L(\cdot):=\nabla(\cdot):H^1_0(\Omega)\to [L^2(\Omega)]^2$, which fulfils \eqref{eq: Continuity of gamma and L} and \eqref{eq: Lowerbound for L} by Poincar\'e Friedrich's inequality, see e.g.~\cite[Prop.~5.3.4]{Brenner2008}. This gives 
        \begin{equation*}
            j(v):=(\rho,|\nabla v|)_{0,\Omega}.
        \end{equation*}
        Note that the inequality \eqref{eq: Bingham Variational Inequality} is thus of the form \eqref{Variational Inequality}.
        To formulate the mixed version of the problem we set the operator $\bm{B}:H^1_0(\Omega)\to H^{-1}(\Omega):=(H^1_0(\Omega))^*$ such that
        \begin{equation*}
            \langle \bm{B}w, z\rangle= (\nabla w,\nabla z)_{0,\Omega}
        \end{equation*}
        for any $w,z\in H^1_0(\Omega)$, cf.~\eqref{eq: Definition of B}. Obviously, it holds that $\bm{B}=\bm{A}$ and the operator $\bm{B}$ satisfies \eqref{eq: B cont and ell}. Therefore, we have
        \begin{equation*}
            \Lambda:=\{w\in H^1_0(\Omega): (\nabla w, \nabla v)_{0,\Omega}\leq j(v)\;\text{ for all }\; v\in H^1_0(\Omega)\}.
        \end{equation*}
        This now leads to the mixed formulation of finding $u\in H^1_0(\Omega)$ and $\lambda\in \Lambda$ such that
        \begin{subequations}\label{Bingham Mixed Form}
            \begin{align}
                (\nabla u,\nabla v)_{0,\Omega} &= (f,v)_{0,\Omega}-(\nabla\lambda,\nabla v)_{0,\Omega},\\
                (\nabla (\mu&-\lambda), \nabla u)_{0,\Omega}\leq 0
            \end{align}
        \end{subequations}
        for all $v\in H^1_0(\Omega)$ and $\mu\in \Lambda$. By Theorem~\ref{Existenz Gemischte Formulierung} there exists a unique solution $(u,\lambda)\in H^1_0(\Omega)\times\Lambda$ to the mixed problem \eqref{Bingham Mixed Form} that is equivalent to the above variational inequality formulation \eqref{eq: Bingham Variational Inequality} and corresponding minimization problem \eqref{eq:Mosolov:Mini}.

        \subsubsection{A Posteriori Error Estimation} \label{sec:Mosolov:Apost}
        
            Let $\tilde{\lambda}_{hp}\in V_{hp}$ be the unique solution of
            \begin{equation} \label{eq:Mosolov:kappa_postprocessing}
                (\nabla\tilde{\lambda}_{hp},\nabla v_{hp})_{0,\Omega} = (f,v_{hp})_{0,\Omega}-(\nabla \tilde{u}_{hp},\nabla v_{hp})_{0,\Omega}
            \end{equation}
            for all $v_{hp}\in V_{hp}$. Note that compared to solving the discrete variant of \eqref{eq:Mosolov:Mini}, \eqref{eq: Bingham Variational Inequality} or \eqref{Bingham Mixed Form}, the post-processing step to acquire $\tilde{\lambda}_{hp}$ is still relatively cheap.  We once more emphasize that 
             the use of \eqref{eq:Mosolov:kappa_postprocessing}  to specify $\tilde{\lambda}$ (in the context of \eqref{eq:abstract:postprocessingLambda})
            enables the application of the classical residual error estimator for Poisson problems.  
            The auxiliary problem \eqref{Auxiliary Problem} is to find $\hat{u}\in V$ such that
            \begin{equation*}
                (\nabla \hat{u},\nabla v)_{0,\Omega} = (f,v)_{0,\Omega}-(\nabla\tilde{\lambda}_{hp},\nabla v)_{0,\Omega}
            \end{equation*}
            for all $v\in V$.
            By proceeding analogously to Section~\ref{sec:fric:apost} we obtain the total error estimator
            \begin{equation*}
                \eta^2:=\hat{\eta}^2+\bm{E}(\theta^*;\tilde{u}_{hp},\tilde{\lambda}_{hp}) 
            \end{equation*}
            where
            \begin{align*}
                \hat\eta^2&:=\sum\limits_{Q\in \T_h}\hat\eta_Q^2 \quad \text{with}\\  
                    \hat\eta_Q^2&:= \frac{h_Q^2}{p_Q^2}\norm{f^{p_Q}+\Delta (\tilde{u}_{hp}+\tilde{\lambda}_{hp})}_{0,Q}^2+ \sum\limits_{E\in \mathcal{E}_I\cap\mathcal{E}_Q}\frac{h_E}{2p_E}\norm{\llbracket \partial_{n_E}(\tilde{u}_{hp}+\tilde{\lambda}_{hp})\rrbracket}_{0,E}^2 
            \end{align*}
                and
            \begin{align*} 
                \bm{E}(\theta^*;\tilde{u}_{hp},\tilde{\lambda}_{hp}):=\norm{\theta^*-\nabla\tilde{\lambda}_{hp}}_{0,\Omega}^2+ j(\tilde{u}_{hp})-(\theta^*,\nabla \tilde{u}_{hp})_{0,\Omega}
            \end{align*}
            with
            \begin{equation*} 
                \theta^*:=\rho\frac{\nabla\tilde{\lambda}_{hp}+\frac12\nabla \tilde{u}_{hp}}{\max\{\rho,|\nabla\tilde{\lambda}_{hp}+\frac12 \nabla \tilde{u}_{hp}|\}},
            \end{equation*}
            see \eqref{eq:optimal_Prop_theta_star} and \eqref{eq:optimal_theta_star}, which lies in $\Lambda_\rho:=\{\tau\in [L^2(\Omega)]^2: |\tau|\leq \rho\}.$
As $\E$ is localizable so is $\eta^2$. Indeed, we have
\begin{align*}
\eta^2 = \sum\limits_{Q\in \T_h} \eta^2_Q \quad \text{with} \quad \eta^2_Q :=\hat{\eta}_Q^2 + \bm{E}_Q(\theta^*;\tilde{u}_{hp},\tilde{\lambda}_{hp})
\end{align*}
where
 \begin{align*} 
  \bm{E}_Q(\theta^*;\tilde{u}_{hp},\tilde{\lambda}_{hp}):=\int_Q (\theta^*-\nabla\tilde{\lambda}_{hp})^2+ \rho |\tilde{u}_{hp}| -(\theta^*)^\top \nabla \tilde{u}_{hp}\, dx.
 \end{align*}
      Furthermore, we need the data oscillation terms
            $$ \operatorname{osc}^2:=\sum\limits_{Q\in\T_h}\operatorname{osc}_Q^2 \quad \text{with} \quad  \operatorname{osc}_Q^2:= \frac{h_Q^2}{p_Q^2}\norm{f-f^{p_Q}}_{0,Q}^2. $$
            Analogously to Theorem~\ref{thm: A posteriori Friction} but exploiting that $\tilde{\lambda}_{hp} \in V_{hp}$ we obtain:
            \begin{theorem} \label{thm:Mosolov:Aposteriori}
                There exists a constant $C_{\text{rel}}>0$ independent of $h$, $p$, $\tilde{u}_{hp}$ and $\tilde{\lambda}_{hp}$ such that
                \begin{equation*}
                    \norm{u-\tilde{u}_{hp}}_{1,\Omega}^2+\norm{\lambda-\tilde{\lambda}_{hp}}_{1,\Omega}^2\leq C_{\text{rel}}\bigl[\eta^2+\operatorname{osc}^2\bigr].
                \end{equation*}
                If $\nabla \lambda\in \Lambda_\rho$,
                then for all $\varepsilon>0$ there exists a constant $C_{\text{eff}}(\varepsilon)>0$ independent of $h$, $p$, $\tilde{u}_{hp}$ and $\tilde{\lambda}_{hp}$ such that for every $Q \in \T_h$ there holds
                \begin{align*}
                    C_{\text{eff}}(\varepsilon)^{-1}\eta^2_Q&\leq p_Q^{2(1+\varepsilon)}\norm{u-\tilde{u}_{hp}}_{1,\omega_Q}^2+ p_Q^{2(1+\varepsilon)} \norm{\lambda-\tilde{\lambda}_{hp}}_{1,\omega_Q}^2+ \norm{\nabla(\lambda-\tilde{\lambda}_{hp})}_{0,Q}^2 \\
                    & \quad + \norm{\nabla(u-\tilde{u}_{hp})}_{0,Q}+ \sum_{Q \in \omega_Q} p_Q^{1+4\varepsilon}\operatorname{osc}_Q^2.
                \end{align*}
            \end{theorem}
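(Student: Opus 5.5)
We prove Theorem~\ref{thm:Mosolov:Aposteriori} exactly as Theorem~\ref{thm: A posteriori Friction}, by specialising the abstract results of Section~\ref{sec: A Posteriori} to $V=W=H^1_0(\Omega)$, $\gamma=\operatorname{id}$, $\L=\nabla$, $\B=\A$.

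\textbf{Reliability.} Since $\tilde{\lambda}_{hp}$ satisfies \eqref{eq:Mosolov:kappa_postprocessing}, it is an instance of \eqref{eq:abstract:postprocessingLambda}. Hence $\tilde{u}_{hp}$ is the Galerkin approximation in $V_{hp}$ of $\hat{u}$. Equivalently, $\tilde{u}_{hp}+\tilde{\lambda}_{hp}$ is the Galerkin approximation of $\hat{u}+\tilde{\lambda}_{hp}$, which solves the Poisson problem $-\Delta w=f$ with homogeneous Dirichlet data. Indeed, $\tilde{\lambda}_{hp}\in V_{hp}$, and this is where the fact is used. The residual
\[
\norm{\A\tilde{u}_{hp}-\lin+\B\tilde{\lambda}_{hp}}_{H^{-1}(\Omega)}
\]
is the dual norm of the Poisson residual of $\tilde{u}_{hp}+\tilde{\lambda}_{hp}$. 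By the $hp$-residual estimator analysis of \cite{Melenk2001a}, this dual norm is bounded by $\hat{C}_R(\hat\eta^2+\operatorname{osc}^2)$, so \eqref{Hilfsschaetzer Zuverlässig} holds. Since $\norm{\cdot}_W=\norm{\cdot}_{1,\Omega}$, Theorem~\ref{Vorstufe Rel und Eff} yields the reliability bound directly.

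\textbf{Local efficiency.} We split $\eta_Q^2=\hat\eta_Q^2+\E_Q$ and treat the two parts separately.

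For $\E_Q$, the hypothesis $\nabla\lambda\in\Lambda_\rho$ lets us apply Corollary~\ref{cor:localEffiE} with $K=Q$. This gives
\[
\E_Q\le \norm{\nabla(\lambda-\tilde{\lambda}_{hp})}_{0,Q}^2+2\norm{\rho}_{0,Q}\norm{\nabla(u-\tilde{u}_{hp})}_{0,Q},
\]
and the constant $2\norm{\rho}_{0,Q}\le 2\norm{\rho}_{L^\infty}|\Omega|^{1/2}$ is absorbed into $C_{\text{eff}}$. This accounts for the two low-order terms in the statement.

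For $\hat\eta_Q^2$, we redo the local efficiency proof of \cite{Melenk2001a}. We use polynomial bubble functions on $Q$ and on the edge patches, together with the $p$-explicit inverse estimates that carry the weights $p^{1+2\varepsilon}$ and $p^{2\varepsilon}$. In that proof we test with the exact mixed equation \eqref{Bingham Mixed Form} instead of the auxiliary problem. Because $\B=\A$, the first mixed equation says that $u+\lambda$ solves $-\Delta(u+\lambda)=f$ weakly. Testing the element residual and the jump residuals of $\tilde{u}_{hp}+\tilde{\lambda}_{hp}$ therefore produces $\norm{\nabla((u+\lambda)-(\tilde{u}_{hp}+\tilde{\lambda}_{hp}))}_{0,\omega_Q}$. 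The triangle inequality splits this into $\norm{u-\tilde{u}_{hp}}_{1,\omega_Q}$ and $\norm{\lambda-\tilde{\lambda}_{hp}}_{1,\omega_Q}$, each carrying the weight $p_Q^{2(1+\varepsilon)}$, plus the oscillation terms weighted by $p_Q^{1+4\varepsilon}$. Unlike the friction case, no boundary edge terms appear.

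\textbf{Main obstacle.} The step needing the most care is the $p$-robust bubble-function efficiency argument: tracking the $\varepsilon$-dependent powers of $p$ when extending edge bubbles and handling the projection $f^{p_Q}$. We would follow \cite{Melenk2001a} verbatim and only record that the exact solution there is replaced by $u+\lambda$. Summing the two contributions gives the stated estimate.
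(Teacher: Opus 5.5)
Your proposal is correct and follows essentially the paper's argument, which obtains the theorem ``analogously to Theorem~\ref{thm: A posteriori Friction}''. Reliability comes from Theorem~\ref{Vorstufe Rel und Eff} together with the $hp$-residual bound of \cite{Melenk2001a} for the Poisson residual of $\tilde{u}_{hp}+\tilde{\lambda}_{hp}$. Local efficiency comes from redoing the bubble-function argument with the exact mixed equation, plus Corollary~\ref{cor:localEffiE} for $\E_Q$. You also correctly identify where $\tilde{\lambda}_{hp}\in V_{hp}$ enters: $\B\tilde{\lambda}_{hp}$ is then a piecewise-polynomial volume-plus-jump term, so no oscillation of $\tilde{\lambda}_{hp}$ arises, unlike the $\tilde{\lambda}-\tilde{\lambda}^{p_E}$ term in the friction case.
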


                We note that the condition $\nabla\lambda\in\Lambda_\rho$ for the efficiency estimate  in Theorem~\ref{thm:Mosolov:Aposteriori} is a regularity assumption on $\lambda$. As the set of gradient fields $\L(\gamma(V))=\nabla H^1_0(\Omega)$ is not dense $[L^2(\Omega)]^2$, see the next section, we cannot apply Lemma~\ref{lem: Lambda weak strong equivalence} to guarantee $\nabla\lambda\in\Lambda_\rho$. However, there are examples where $\nabla\lambda\in\Lambda_\rho$ does hold, see e.g.~Section~\ref{sec:Num:Mosolov}.

 \subsubsection{An Alternative Mixed Formulation}
 The definitions of $\gamma$ and $\L$ need not be unique. In fact,  for the Mosolov problem we can interchange the role of $\gamma$ and $\L$. This leads to
$$ W:=\nabla H^1_0(\Omega) :=\left\{ w \in [L^2(\Omega)]^2: \exists v \in H^1_0(\Omega) \text{ with } \nabla v = w \right\}. $$
We note that the operator $\nabla(\cdot):H^1_0(\Omega)\to [L^2(\Omega)]^2$ is bounded from below by the Poincar\'e-Friedrich inequality and, therefore, injective and $\nabla H^1_0(\Omega)$ is closed, see~\cite[Thm.~2.5]{Abramovich2002}. Thus, $W$ is a real Hilbert space equipped with the $L^2$-norm and, consequently, $\nabla H^1_0(\Omega) \subsetneq [L^2(\Omega)]^2$ is not dense in $[L^2(\Omega)]^2$. Therefore, we set $\gamma:=\nabla(\cdot):H^1_0(\Omega)\to\nabla H^1_0(\Omega)$, which fulfils \eqref{eq: Continuity of gamma and L} and its image $\nabla H^1_0(\Omega)=W$ is dense in itself. Further, we define $\L(\cdot):=\operatorname{id}:\nabla H^1_0(\Omega)\to \nabla H^1_0(\Omega)$, which clearly satisfies \eqref{eq: Continuity of gamma and L} and \eqref{eq: Lowerbound for L}.
        In this case we have the same functional $j(\cdot)$ as before and still have the variational inequality \eqref{eq: Bingham Variational Inequality}. The operator $\B:\nabla H^1_0(\Omega)\to (\nabla H^1_0(\Omega))^*$ is simply given by the $L^2$-inner product
        \begin{equation*}
            \langle \B w,z\rangle_W = (w,z)_{0,\Omega},
        \end{equation*}
        which is both elliptic and bounded. The set $\hat\Lambda$ (taking the role of $\Lambda$) is then given by
        \begin{equation*}
           \hat \Lambda:=\{\hat{w}\in\nabla H^1_0(\Omega): (\hat{w},\nabla v)_{0,\Omega}\leq j(v)\;\text{ for all } v\in  H^1_0(\Omega)\},
        \end{equation*}
        which leads to the mixed formulation of finding $u\in H^1_0(\Omega)$ and $\hat\lambda\in \hat\Lambda$ such that
        \begin{subequations}\label{Alt Bingham Mixed Form}
            \begin{align}
                (\nabla u,\nabla v)_{0,\Omega} &= (f,v)_{0,\Omega}-(\hat\lambda,\nabla v)_{0,\Omega},\\
                (\hat \mu&-\hat\lambda, \nabla u)_{0,\Omega}\leq 0
            \end{align}
        \end{subequations}
        for all $(v,\hat\mu)\in H^1_0(\Omega) \times \hat\Lambda$. By Theorem~\ref{Existenz Gemischte Formulierung} the problem \eqref{Alt Bingham Mixed Form} is equivalent to the variational inequality \eqref{eq: Bingham Variational Inequality} and has a unique solution.

This mixed formulation is closely linked to the mixed formulation in \cite[Ch.~II]{Glowinski1984} with the only difference that there the set of admissible Lagrange multipliers is $\Lambda':=\{\hat{w}\in [L^2(\Omega)]^2: |w|\leq\rho\}$ and  not $\hat\Lambda$. This, however, implies that the Lagrange multiplier is not unique and that the inf-sup condition \eqref{lem:dense subset norm estimation} is not fulfilled , i.e.~the abstract framework of Section~\ref{sec: A Posteriori} is not applicable when $\hat{\Lambda}$ is exchanged by $\Lambda'$.

By exploiting that $\gamma:=\nabla(\cdot):H^1_0(\Omega)\to\nabla H^1_0(\Omega)$ is bijective, we easily find that \eqref{Bingham Mixed Form} and \eqref{Alt Bingham Mixed Form} are equivalent with the same primal solution $u$ and $\hat{\lambda} = \nabla \lambda$. 
Thus, applying the abstract results of Section~\ref{sec: A Posteriori} leads to an a posteriori error estimators which we could also obtain from Section~\ref{sec:Mosolov:Apost} where we consequently replace $\nabla \lambda$ with $\hat{\lambda}$.
In that case the auxiliary problem \eqref{eq:Mosolov:kappa_postprocessing} is replaced by 
                \begin{equation*}
                  \tilde{\hat\lambda} \in \nabla H^1_0(\Omega): \quad  (\tilde{\hat\lambda},\nabla v_{hp})_{0,\Omega} = (f,v_{hp})_{0,\Omega}-(\nabla \tilde{u}_{hp},\nabla v_{hp})_{0,\Omega} \quad \forall v_{hp} \in V_{hp}
                \end{equation*}
                and the functional $\E(\hat\theta^*;\tilde{u}_{hp},\tilde{\hat\lambda})$ becomes
                \begin{equation*}
                    \bm{E}(\hat\theta^*;\tilde{u}_{hp},\tilde{\hat\lambda}):=\norm{\hat\theta^*-\tilde{\hat\lambda}}_{0,\Omega}^2+ j(\tilde{u}_{hp})-(\hat\theta^*,\nabla \tilde{u}_{hp})_{0,\Omega}
                \end{equation*}
                with 
                \begin{equation*}
                    \hat\theta^*:=\rho\frac{\tilde{\hat\lambda}+\frac12\nabla \tilde{u}_{hp}}{\max\{\rho,|\tilde{\hat\lambda}+\frac12 \nabla \tilde{u}_{hp}|\}}.
                \end{equation*}
                Additionally, the local error contribution $\hat\eta_Q^2$ changes to
            \begin{equation*}
                \hat\eta_Q^2:= \frac{h_Q^2}{p_Q^2}\norm{f^{p_Q}+\Delta \tilde{u}_{hp}+\operatorname{div}\tilde{\hat\lambda}}_{0,Q}^2+ \sum\limits_{E\in \mathcal{E}_I\cap\mathcal{E}_Q}\frac{h_E}{2p_E}\norm{\llbracket \partial_{n_E}\tilde{u}_{hp}+\tilde{\hat\lambda} n_E\rrbracket}_{0,E}^2.
            \end{equation*}
            Then, we get an analogous result to Theorem~\ref{thm:Mosolov:Aposteriori} with $\norm{\lambda-\tilde{\lambda}}_{1,\Omega}^2$ replaced by $\norm{\hat{\lambda}-\tilde{\hat\lambda}}_{0,\Omega}^2$ and  $\nabla\lambda\in\Lambda_\rho$ replaced by $\hat\lambda\in\Lambda_\rho$.

 \section{Numerical Results}\label{sec: numerics}

The aim of the following numerical experiments is to validate the efficiency and reliability of the a posteriori error estimator for two different problem sets and several finite element methods going from lowest order uniform $h$-version to $hp$-adaptivity. In case of adaptive schemes we use Dörfler marking. To decide between $h$ and $p$ refinement we estimate the local Sobolev regularity based on the decay rate of the a posteriori error estimator in $p$ for the Dörfler marked elements. We refer to \cite{Banz2022} for more details on the implementation.
While the focus lies on estimating the discretization error of some Galerkin solution, we also demonstrate the flexibility and behaviour of the a posteriori error estimator by applying it to the semi-smooth Newton (SSN) iterates that converge to the sought Galerkin solution.

\subsection{An Idealized Frictional Problem} \label{sec:numeric:fric}
We reconsider the numerical example from \cite{Schroder2011a}. That is $\Omega = (-1,1)^2$, $\Gamma_D=[-1,1] \times \{1\}$, $\Gamma_C = (-1,1) \times \{-1\}$, $\Gamma_N = \partial \Omega \setminus (\Gamma_D \cup \overline{\Gamma_C})$ with the data $f \equiv -1$, $g \equiv 0$ and $\rho = 2(1-x_1)^2$. While the domain is a square and the data are polynomial functions, the solution is neither a polynomial nor smooth, see Figure~\ref{fig:fric:solution}. In fact, the solution displays six  singular points of different strength (of which only four are of numerical relevance). These are the four corner points of $\Omega$ with changes in the boundary conditions and two points on $\Gamma_C$ (the free boundary) where the solution switches between stick and slip.

\begin{figure}[ht]
  \centering 
  \begin{subfigure}[t]{0.45\textwidth}
    \centering
	\includegraphics[trim = 18mm 14mm 11mm 20mm, clip,height=35mm, keepaspectratio]{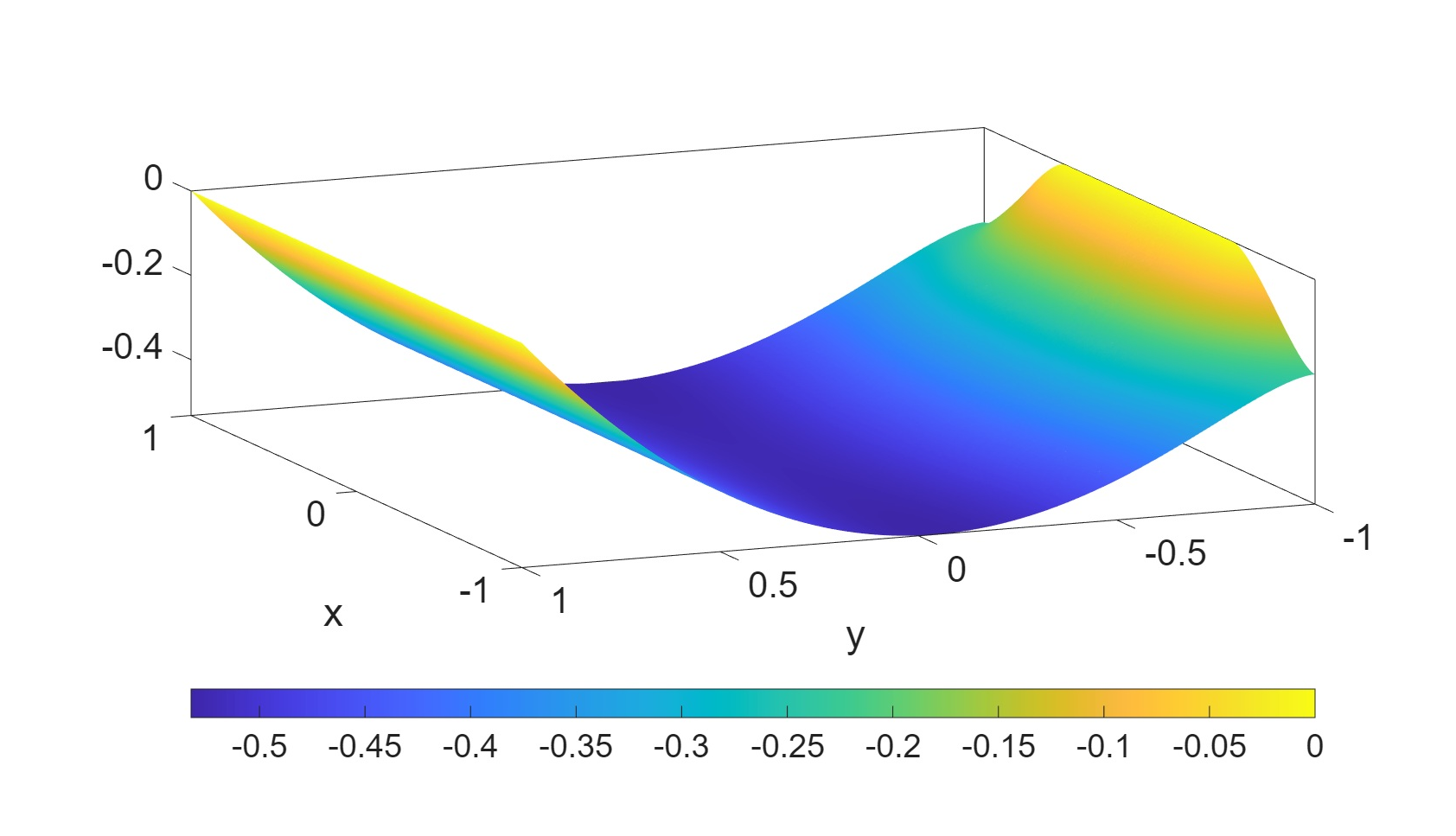}
	\caption{solution $u_{hp}$}
  \end{subfigure}
  %
  %
  \begin{subfigure}[t]{0.45\textwidth}
    \centering
	\includegraphics[trim = 20mm 9mm 20mm 9mm, clip,height=35mm, keepaspectratio]{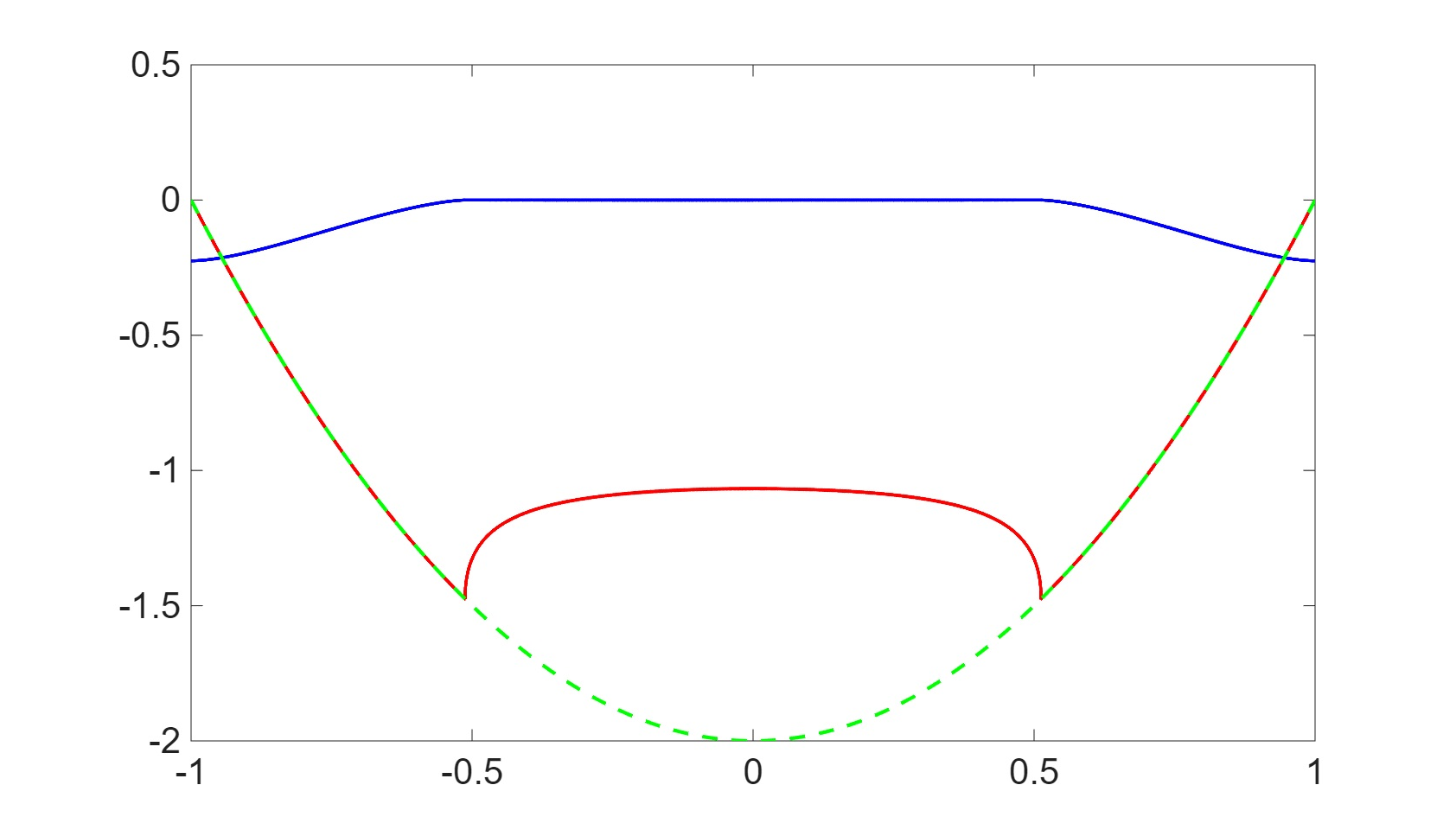}
	\caption{$u_{hp}|_{\Gamma_C}$ (blue), $\lambda_{hp}$ (red), $-\rho$ (dashed green)}
  \end{subfigure}
 \caption{Visualization of $hp$-adaptive FE solution with 24.610 + 984 degrees of freedom (idealized frictional problem).}
  \label{fig:fric:solution}
\end{figure}

We compute the discrete solution by solving the discrete mixed problem: Find $(u_{hp},\lambda_{hp}) \in V^D_{hp} \times \Lambda_{hp}$ such that
\begin{equation} \label{eq:discreteMixed}
    \aligned
      (\nabla u_{hp}, \nabla v_{hp})_{0,\Omega} &= (f,v_{hp})_{0,\Omega}+(g,\gamma(v_{hp}))_{0,\Gamma_N}-(\lambda_{hp},\gamma_C(v_{hp}))_{0,\Gamma_C},\\
      (\mu_{hp}&-\lambda_{hp},\gamma_C(u_{hp}))_{0,\Gamma_C}\leq 0
  \endaligned
\end{equation}
for all $(v_{hp},\mu_{hp}) \in V^D_{hp} \times \Lambda_{hp}$. Here, $V^D_{hp}=\operatorname{span}\{\phi_i\}_{i=1}^n$ is defined as in Section~\ref{Discretization} and spanned by Gauss-Lobatto-Lagrange basis functions $\phi_i$.  We set
$$\Lambda_{hp}:= \{w_{hp}\in \operatorname{span}\{\psi_i\}_{i=1}^m \subset L^2(\Gamma_C): (w_{hp},\gamma_C(v_{hp}))_{0,\Gamma_C}\leq j_{hp}(v_{hp}) \;\text{ for all }\; v_{hp}\in V^D_{hp}\}$$
with $$ j_{hp}(v_{hp}) := \int_{\Gamma_C} \rho(x) \sum_{i=1}^n |v_i| \phi_i(x) \, ds \quad \forall v_{hp} = \sum_{i=1}^n v_i \phi_i(x) \in V_{hp}^D.$$
The dual basis functions $\{\psi_i\}_{i=1}^m$ are biorthogonal to the $\phi_j$ with respect to $(\cdot,\cdot)_{0,\Gamma_C}$, see~e.g.~\cite{Banz2015b,Banz2015a}.  Here, $m$ is the number of primal degrees of freedom associated with $\Gamma_C$. Due to the biorthogonality of the basis functions the algebraic version of the discrete mixed problem \eqref{eq:discreteMixed} can be written as $F(\vec{u},\vec{\lambda})=0$ with a non-linear but semi-smooth function $F: \mathbb{R}^n \times \mathbb{R}^m \rightarrow \mathbb{R}^n \times \mathbb{R}^m$, see \cite{Banz2015b} for the analogous problem of Tresca friction in linear elasticity. That problem in turn is solved with a semi-smooth Newton solver that stops if $|F(\vec{u},\vec{\lambda})|^2\leq 2 \cdot 10^{-24}$, see \cite{Bammer2025Solver} for more details and a theoretical investigation of that solver in a general framework.
Following the discussion at the end of Section~\ref{sec: A Posteriori} we set $\tilde{\lambda} := \lambda_{hp}$.

As no exact solution is known we approximate the discretization error by
\begin{equation} \label{eq:fric:def_error} 
\|u-u_{hp}\|_{1,\Omega}^2 + \|\lambda - \lambda_{hp}\|_{0,\Gamma_C}^2 \approx \|u_{fine}-u_{hp}\|_{1,\Omega}^2 + \|\lambda_{fine} - \lambda_{hp}\|_{0,\Gamma_C}^2 
\end{equation}
where $(u_{fine},\lambda_{fine})$ is an overkill finite element solution of \eqref{eq:discreteMixed} which results from halving the mesh size and increasing the polynomial degree by one compared to the finest discretization of the same refinement scheme. 

\subsubsection{Numerical Results for Galerkin Solution}
Figure~\ref{fig:fric:Error} shows the reduction of the reliable a posteriori error estimator $\eta$, see Theorem~\ref{thm: A posteriori Friction}, when increasing the degrees of freedom for six different refinement strategies. These are: uniform $h$-version with $p=1,2$ and $3$; $h$-adaptive schemes with $p=2$ and $3$; as well as an $hp$-adaptive scheme.
We observe that the error estimator for the lowest order uniform $h$-version converges at optimal rate, i.e~of order $0.5$ with respect to the total degrees of freedom, i.e.~$n+m$. Increasing the polynomial degree to two or three does not increase the convergence order indicating a low Sobolev regularity of the exact solution. Switching to an $h$-adaptive scheme with a uniform polynomial degree of two or three we recover optimal order of convergence, i.e.~order $p/2$; whereas with an $hp$-adaptive scheme we even obtain exponential convergence, see~Figure~\ref{fig:fric:Error_EXPO} for an appropriate scaling of the error plot. The reason that optimal order of convergence can be achieved lies in the resolution of the four relevant point singularities using isotropic refinements, see Figure~\ref{fig:fric:meshes} for some adaptively generated meshes. This is a peculiarity of the two dimensional friction problem. For the three dimensional analogue the free boundary usually is a curve in $\Gamma_C$. Isotropic refinements  resolving that free boundary implies,  at most, an algebraic order of convergence, see also the numerical results for the Mosolov problem in Section~\ref{sec:Num:Mosolov} which also has a curved free boundary.

\begin{figure}[tb]
  \centering\hspace{-1em}
  \subfloat[Error estimator $\eta$]{ \label{fig:fric:Error}
	\begin{tikzpicture}[scale=0.82]
		\begin{loglogaxis}[
			width=0.6\textwidth,
			mark size=3pt,
			line width=0.75pt,
			xmin=1,xmax=3e6,
			ymin=1e-9,ymax=1e+1,
			legend style={at={(0,0)},anchor=south west},
            legend style={fill=none}
			]
			
			\addplot+[mark=o, color=blue] table[x index=0,y index=4] {fric_h1.txt};
 			\addplot+[mark=x, color=teal] table[x index=0,y index=4] {fric_h2.txt};
			\addplot+[mark=triangle, color=red] table[x index=0,y index=4] {fric_h3.txt};

			\addplot+[mark=square, color=orange] table[x index=0,y index=4] {fric_a2.txt};
	          \addplot+[mark=star, color=cyan, solid] table[x index=0,y index=4] {fric_a3.txt};
	        \addplot+[mark=diamond, color=purple, solid] table[x index=0,y index=4] {fric_hp.txt};
			
			\draw (5e4,3e-4) -- (5e5,3e-4);
			\draw (5e5,3e-4) -- (5e5,3e-4*0.1);
			\draw (5e4,3e-4) -- (5e5,3e-4*0.1);
			\node at (8e5,1.0e-4) {$1$};
			
			\draw (5e4,8e-7*0.031622776601684) -- (5e5,8e-7*0.031622776601684);
 			\draw (5e4,8e-7) -- (5e4,8e-7*0.031622776601684);
 			\draw (5e4,8e-7) -- (5e5,8e-7*0.031622776601684);
			\node at (2.8e4,3e-7) {$1.5$};
            
			\draw (5e4,8e-2) -- (5e5,8e-2);
			\draw (5e5,8e-2) -- (5e5,8e-2*0.316227766016838);
			\draw (5e4,8e-2) -- (5e5,8e-2*0.316227766016838);
			\node at (10e5,4.5e-2) {$0.5$};

			\legend{{$h$-unif.~$p=1$},{$h$-unif.~$p=2$},{$h$-unif.~$p=3$},{$h$-adap.~$p=2$},{$h$-adap.~$p=3$},{$hp$-adap.}}
            
		\end{loglogaxis}
	\end{tikzpicture}}    
    \hspace{0.4cm}  \subfloat[Error estimator $\eta$]{ \label{fig:fric:Error_EXPO}
	\begin{tikzpicture}[scale=0.82]
		\begin{semilogyaxis}[
		      width=0.6\textwidth,
			mark size=3pt,
			line width=0.75pt,
			xmin=1,xmax=140,
			ymin=1e-9,ymax=1e1,
			legend style={at={(1,1)},anchor=north east},
            legend style={fill=none}
			]

           	\addplot+[mark=o, color=blue] table[x index=1,y index=4] {fric_h1.txt};
 			\addplot+[mark=x, color=teal] table[x index=1,y index=4] {fric_h2.txt};
			\addplot+[mark=triangle, color=red] table[x index=1,y index=4] {fric_h3.txt};

			\addplot+[mark=square, color=orange] table[x index=1,y index=4] {fric_a2.txt};
	          \addplot+[mark=star, color=cyan, solid] table[x index=1,y index=4] {fric_a3.txt};
	        \addplot+[mark=diamond, color=purple, solid] table[x index=1,y index=4] {fric_hp.txt};

		\end{semilogyaxis}
	\end{tikzpicture}} 
	
	\caption{Error estimator vs.~total degrees of freedom $(n+m)$ (left) and vs.~$(n+m)^{1/3}$ (right) for idealized frictional problem. The legend is the same for both figures.} \label{fig:fric:error1}
\end{figure}

\begin{figure}
  \centering 
  \begin{subfigure}[t]{0.29\textwidth}
    \centering
   \includegraphics[trim = 80mm 18mm 70mm 12mm, clip,height=36mm, keepaspectratio]{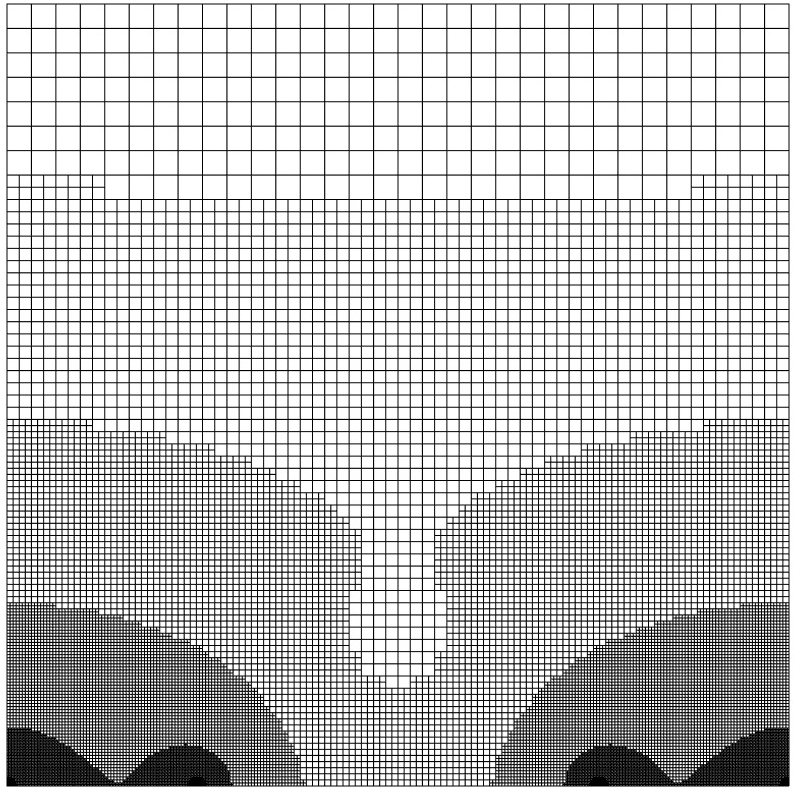}
    \caption{$h$-adaptive mesh, $p=2$ (nr.~25,$82.432+909$ DOF)}
  \end{subfigure}
  \hspace{0.2cm} 
  \begin{subfigure}[t]{0.29\textwidth}
    \centering
    \includegraphics[trim = 80mm 18mm 70mm 12mm, clip,height=36mm, keepaspectratio]{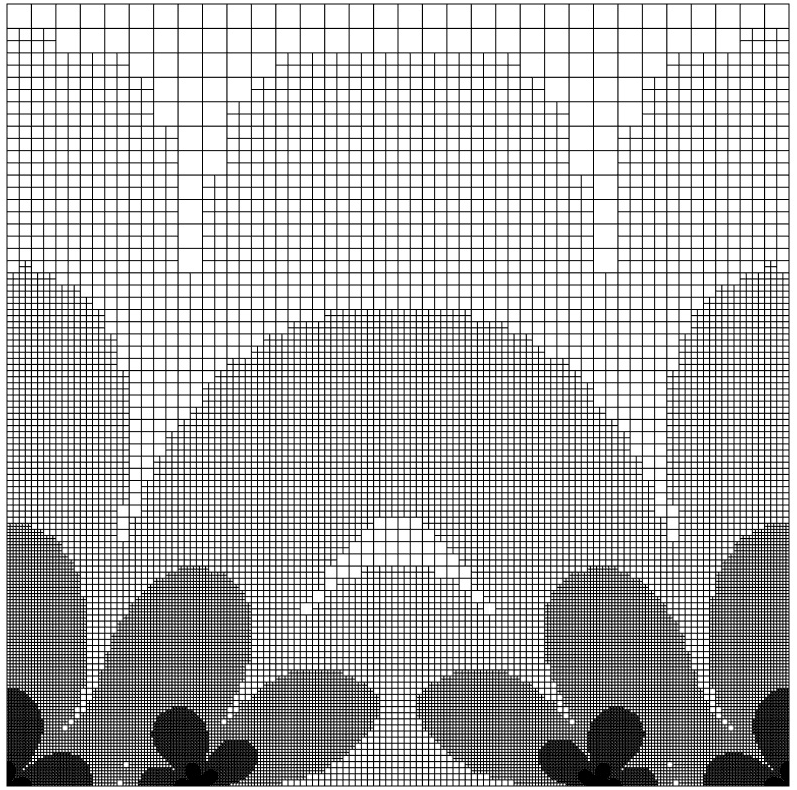}
    \caption{$h$-adaptive mesh, $p=3$ \\ (nr.~35, $277.557+1.675$ DOF)}
  \end{subfigure}
  \hspace{0.3cm} 
  \begin{subfigure}[t]{0.29\textwidth}
    \centering
    \includegraphics[trim = 80mm 14mm 43mm 10mm, clip,height=37.5mm, keepaspectratio]{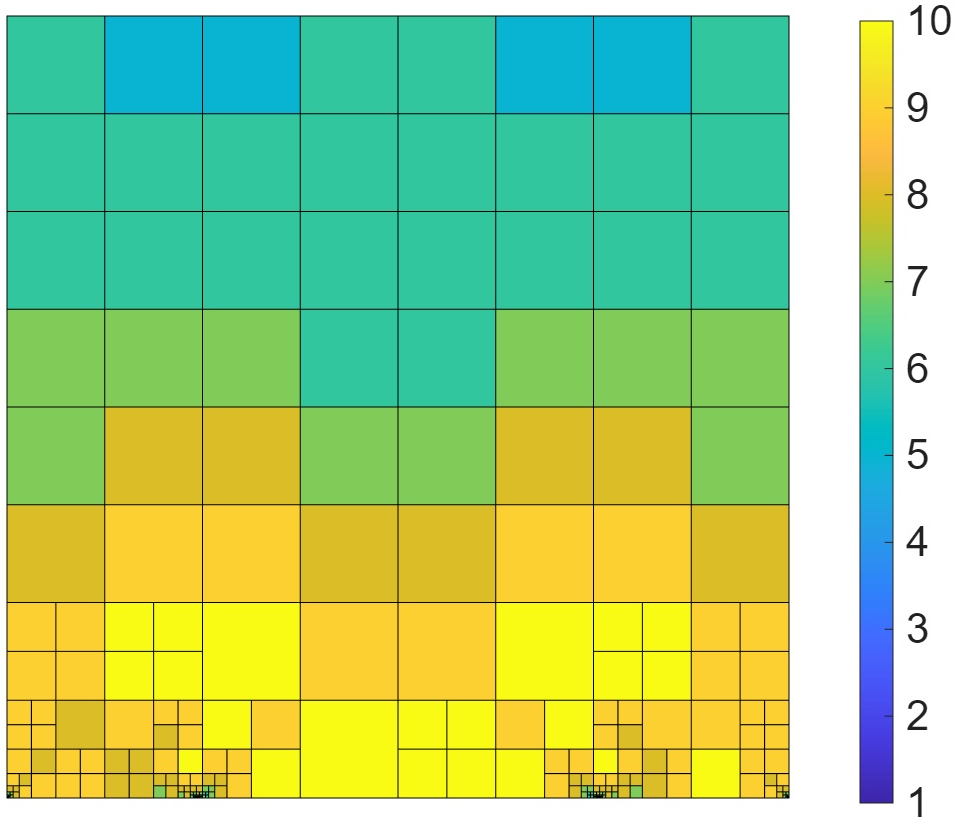}
    \caption{$hp$-adaptive mesh (nr.~47, $24.610+984$ DOF)}
  \end{subfigure}
  \hspace{0.5cm}
  \caption{\it Adaptively generated meshes for idealized frictional problem.}
  \label{fig:fric:meshes}
\end{figure}

Figure~\ref{fig:fric:effiConst} displays the efficiency index, i.e.~the error estimator $\eta$ divided by the error \eqref{eq:fric:def_error}. We observe an efficiency index ranging from 0.3 to 4, which are noticeably volatile for the adaptive schemes. Nevertheless, there is no strong upward trend indicating that the efficiency estimate of Theorem~\ref{thm: A posteriori Friction} can possibly be improved to such an extent that even the term of lower order $\|\gamma_C(u-\tilde{u}_{hp})\|_{0,E}$ can be omitted (at least for some specific Galerkin solutions).

\begin{figure}[tb]
  \centering\hspace{-1em}
	\begin{tikzpicture}[scale=0.82]
		\begin{semilogxaxis}[
		      width=0.6\textwidth,
			mark size=3pt,
			line width=0.75pt,
			xmin=10,xmax=1e6,
			ymin=0,ymax=4,
			legend style={at={(1,1)},anchor=north west},
			]

			\addplot+[mark=o, color=blue] table[x index=0,y index=8] {fric_h1.txt};
 			\addplot+[mark=x, color=teal] table[x index=0,y index=8] {fric_h2.txt};
			\addplot+[mark=triangle, color=red] table[x index=0,y index=8] {fric_h3.txt};
			
			\addplot+[mark=square, color=orange] table[x index=0,y index=8] {fric_a2.txt};
	          \addplot+[mark=star, color=cyan, solid] table[x index=0,y index=8] {fric_a3.txt};
	        \addplot+[mark=diamond, color=purple, solid] table[x index=0,y index=8] {fric_hp.txt};

			\legend{{$h$-unif.~$p=1$},{$h$-unif.~$p=2$},{$h$-unif.~$p=3$},{$h$-adap.~$p=2$},{$h$-adap.~$p=3$},{$hp$-adap.}}
            
		\end{semilogxaxis}
	\end{tikzpicture}
	
	\caption{Error estimator $\eta$ divided by the error $\sqrt{\|u_{fine}-u_{hp}\|_{1,\Omega}^2 + \|\lambda_{fine}-\lambda_{hp}\|_{0,\Gamma_C}^2}$ (idealized frictional problem).} \label{fig:fric:effiConst}
\end{figure}

The behaviour of the two individual contributions of the error estimator $\eta$, namely the classical residual error estimator $\hat\eta$ and the friction specific contribution $\bm{E}^{1/2}(\theta^*; u_{hp},\lambda_{hp})$, are displayed in Figure~\ref{fig:fric:errorEstContr}.
For comparison we also plot one tenth of the primal and of the dual error, i.e.~$10^{-1}\|u_{fine}-u_{hp}\|_{1,\Omega}$ and $10^{-1}\|\lambda_{fine}-\lambda_{hp}\|_{0,\Gamma_C}$.
We find that both error estimator contributions contribute in a similar quantity. However, the friction contribution $\bm{E}^{1/2}(\theta^*; u_{hp},\lambda_{hp})$ is drawn solely from elements that have an edge on $\Gamma_C$, making that contribution vital for any marking strategy. We further gain the impression that $\bm{E}^{1/2}(\theta^*; u_{hp},\lambda_{hp})$ is more strongly linked to $\|\lambda_{fine}-\lambda_{hp}\|_{0,\Gamma_C}$, whereas $\hat\eta$ is more closely linked with
$\|u_{fine}-u_{hp}\|_{1,\Omega}$, see in particular the Figures~\ref{fig:Contribution_h2}, \ref{fig:Contribution_h3} and \ref{fig:Contribution_hp}.

\begin{figure}
  \centering 
  \begin{subfigure}[t]{0.3\textwidth}
    \centering
    \begin{tikzpicture}[scale=0.6]
		\begin{loglogaxis}[
			width=1.8\textwidth,
			mark size=2.5pt,
			line width=0.75pt,
			xmin=1,xmax=3e6,
			ymin=1e-4,ymax=1e+1,
			legend style={at={(0,0)},anchor=south west},
            legend style={fill=none}
			]
   
			\addplot+[mark=square, solid, color=blue] table[x index=0,y expr=sqrt(\thisrowno{5}^2+\thisrowno{6}^2)] {fric_h1.txt};
            \addplot+[mark=+, color=red] table[x index=0,y index=7] {fric_h1.txt};
            
            \addplot+[mark=x,solid, color=orange] table[x index=0,y expr=0.1*\thisrowno{2}] {fric_h1.txt};
            \addplot+[mark=o, solid, color=teal] table[x index=0,y expr=0.1*\thisrowno{3}] {fric_h1.txt};
      
		\end{loglogaxis}
	  \end{tikzpicture}
      \caption{$h$-uniform, $p=1$}
      \label{fig:Contribution_h1}
    \end{subfigure}  
    \hspace{0.3cm} 
  \begin{subfigure}[t]{0.3\textwidth}
    \centering
    \begin{tikzpicture}[scale=0.6]
		\begin{loglogaxis}[
			width=1.8\textwidth,
			mark size=2.5pt,
			line width=0.75pt,
			xmin=10,xmax=3e6,
			ymin=1e-5,ymax=1e+0,
			legend style={at={(0,0)},anchor=south west},
            legend style={fill=none}
			]
   
			\addplot+[mark=square, solid, color=blue] table[x index=0,y expr=sqrt(\thisrowno{5}^2+\thisrowno{6}^2)] {fric_h2.txt};
            \addplot+[mark=+, color=red] table[x index=0,y index=7] {fric_h2.txt};
            
            \addplot+[mark=x,solid, color=orange] table[x index=0,y expr=0.1*\thisrowno{2}] {fric_h2.txt};
            \addplot+[mark=o, solid, color=teal] table[x index=0,y expr=0.1*\thisrowno{3}] {fric_h2.txt};
      
		\end{loglogaxis}
	  \end{tikzpicture}
      \caption{$h$-uniform, $p=2$}
      \label{fig:Contribution_h2}
    \end{subfigure} 
     \hspace{0.3cm} 
       \begin{subfigure}[t]{0.3\textwidth}
    \centering
    \begin{tikzpicture}[scale=0.6]
		\begin{loglogaxis}[
			width=1.8\textwidth,
			mark size=2.5pt,
			line width=0.75pt,
			xmin=10,xmax=5e6,
			ymin=1e-5,ymax=1e+0,
			legend style={at={(0,0)},anchor=south west},
            legend style={fill=none}
			]
   
	       \addplot+[mark=square, solid, color=blue] table[x index=0,y expr=sqrt(\thisrowno{5}^2+\thisrowno{6}^2)] {fric_h3.txt};
            \addplot+[mark=+, color=red] table[x index=0,y index=7] {fric_h3.txt};
            
            \addplot+[mark=x,solid, color=orange] table[x index=0,y expr=0.1*\thisrowno{2}] {fric_h3.txt};
            \addplot+[mark=o, solid, color=teal] table[x index=0,y expr=0.1*\thisrowno{3}] {fric_h3.txt};
      
		\end{loglogaxis}
	  \end{tikzpicture}
      \caption{$h$-uniform, $p=3$}
      \label{fig:Contribution_h3}
    \end{subfigure}

  \begin{subfigure}[t]{0.3\textwidth}
    \centering
    \begin{tikzpicture}[scale=0.6]
		\begin{loglogaxis}[
			width=1.8\textwidth,
			mark size=2.5pt,
			line width=0.75pt,
			xmin=10,xmax=3e6,
			ymin=1e-7,ymax=1e+0,
			legend style={at={(0,0)},anchor=south west},
            legend style={fill=none}
			]
   
	       \addplot+[mark=square, solid, color=blue] table[x index=0,y expr=sqrt(\thisrowno{5}^2+\thisrowno{6}^2)] {fric_a2.txt};
            \addplot+[mark=+, color=red] table[x index=0,y index=7] {fric_a2.txt};
            
            \addplot+[mark=x,solid, color=orange] table[x index=0,y expr=0.1*\thisrowno{2}] {fric_a2.txt};
            \addplot+[mark=o, solid, color=teal] table[x index=0,y expr=0.1*\thisrowno{3}] {fric_a2.txt};
      
		\end{loglogaxis}
	  \end{tikzpicture}
      \caption{$h$-adaptive, $p=2$}
      \label{fig:Contribution_a2}
    \end{subfigure}  
    \hspace{0.3cm} 
  \begin{subfigure}[t]{0.3\textwidth}
    \centering
    \begin{tikzpicture}[scale=0.6]
		\begin{loglogaxis}[
			width=1.8\textwidth,
			mark size=2.5pt,
			line width=0.75pt,
			xmin=10,xmax=3e6,
			ymin=1e-9,ymax=1e+0,
			legend style={at={(0,0)},anchor=south west},
            legend style={fill=none}
			]
   
	        \addplot+[mark=square, solid, color=blue] table[x index=0,y expr=sqrt(\thisrowno{5}^2+\thisrowno{6}^2)] {fric_a3.txt};
            \addplot+[mark=+, color=red] table[x index=0,y index=7] {fric_a3.txt};
            
            \addplot+[mark=x,solid, color=orange] table[x index=0,y expr=0.1*\thisrowno{2}] {fric_a3.txt};
            \addplot+[mark=o, solid, color=teal] table[x index=0,y expr=0.1*\thisrowno{3}] {fric_a3.txt};
      
		\end{loglogaxis}
	  \end{tikzpicture}
      \caption{$h$-adaptive, $p=3$}
      \label{fig:Contribution_a3}
    \end{subfigure} 
     \hspace{0.3cm} 
       \begin{subfigure}[t]{0.3\textwidth}
    \centering
    \begin{tikzpicture}[scale=0.6]
		\begin{loglogaxis}[
			width=1.8\textwidth,
			mark size=2.5pt,
			line width=0.75pt,
			xmin=50,xmax=5e4,
			ymin=1e-9,ymax=1e+0,
			legend style={at={(0,0)},anchor=south west},
            legend style={fill=none}
			]
   
	        \addplot+[mark=square, solid, color=blue] table[x index=0,y expr=sqrt(\thisrowno{5}^2+\thisrowno{6}^2)] {fric_hp.txt};
            \addplot+[mark=+, color=red] table[x index=0,y index=7] {fric_hp.txt};
            
            \addplot+[mark=x,solid, color=orange] table[x index=0,y expr=0.1*\thisrowno{2}] {fric_hp.txt};
            \addplot+[mark=o, solid, color=teal] table[x index=0,y expr=0.1*\thisrowno{3}] {fric_hp.txt};

			 \legend{{residual contr.~$\hat\eta$},{friction contr.~$\bm{E}^{1/2}$},{$0.1\| u_{fine}-u_{hp}\|_{1,\Omega}$},{0.1$\| \lambda_{fine}-\lambda_{hp}\|_{0,\Gamma_C}$}}
      
		\end{loglogaxis}
	  \end{tikzpicture}
      \caption{$hp$-adaptive}
      \label{fig:Contribution_hp}
    \end{subfigure}

    \caption{Individual terms of the error estimator and of the discretization error vs.~total degrees of freedom. The legend is the same for all figures. (idealized frictional problem).}
  \label{fig:fric:errorEstContr}
    
\end{figure}

\subsubsection{Numerical Results for Semi-Smooth Newton Iterates}
To demonstrate that the a posteriori error estimator of Theorem~\ref{thm: A posteriori Friction} is applicable to a wide range of function pairs and not only to Galerkin solutions, we apply the error estimator to the SSN iterates that converge to the corresponding Galerkin solution. Let $(u_{hp}^{(i)},\lambda_{hp}^{(i)})$ be the $i$-th iterate with initial iterate $\lambda_{hp}^{(0)}=0$ and
$u_{hp}^{(0)} \in V^D_{hp}$ such that
\begin{equation*}
      (\nabla u_{hp}^{(0)}, \nabla v_{hp})_{0,\Omega} = (f,v_{hp})_{0,\Omega}+(g,\gamma(v_{hp}))_{0,\Gamma_N}
\end{equation*}
for all $v_{hp} \in V^D_{hp}$. We set $\tilde{u}_{hp}:=u_{hp}^{(i)}$ and $\tilde{\lambda} := \lambda_{hp}^{(i)}$ and note that \eqref{eq:fric:kappa_postprocessing} is fulfilled for every iterate $(u_{hp}^{(i)},\lambda_{hp}^{(i)})$ as a consequence of the initial iterate and that the first $n$ equation lines in $F(\vec{u},\vec{\lambda})=0$ are linear.

Figure~\ref{fig:fric:SSN} shows how the error estimator $\eta$ and the discretization error \eqref{eq:fric:def_error} evolve with increasing SSN iteration for different but fixed meshes. For comparison we also plot the corresponding merit values, i.e.~$2^{-1}|F(\vec{u}^{(i)},\vec{\lambda}^{(i)})|^2$, which belong to the right $y$-axes.
The error estimator curve follows nicely the error curve over all iterations but sometimes overestimates and other times underestimates the true error. The quotient of the error estimator and the true error ranges between $1/3$ and $3$, see Figure~\ref{fig:SSN_effiConst}. For these SSN iterates, the error estimator for the iterates is mainly driven by the friction contribution $\bm{E}^{1/2}(\theta^*;u^{(i)}_{hp},\lambda_{hp}^{(i)})$,  which is probably caused by the initial iterate. As that contribution is a relevant but not the largest contribution to the error estimator if applied to the Galerkin solution, see Figure~\ref{fig:fric:errorEstContr}, it explains why the efficiency index noticeably changes over the last few SSN iterations, see Figure~\ref{fig:SSN_effiConst}.
Thus, the error estimator or even the cheap to compute friction contribution alone can possibly be used for a strategy that balances out the discretization error with the solver iteration error. This is of great interest for adaptive schemes, in particular, for those coarse meshes for which the desired level of accuracy is still far from being achieved.

\begin{figure}
  \centering 
  \begin{subfigure}[t]{0.28\textwidth}
    \centering
    \begin{tikzpicture}[scale=0.55]
		\begin{semilogyaxis}[
			width=1.8\textwidth,
			mark size=2.5pt,
			line width=0.75pt,
			xmin=0,xmax=9,
			ymin=1e-3,ymax=1e+1,
            axis y line*=left,
			legend style={at={(0,0)},anchor=south west},
            legend style={fill=none}
			]
   
			\addplot+[mark=+, solid, color=blue] table[x index=0,y index=1] {SSN_h1.txt};
            \addplot+[mark=o, color=red] table[x index=0,y index=2] {SSN_h1.txt};
    		 \legend{{Error est.},{Error}}

		\end{semilogyaxis}

        \begin{semilogyaxis}[
			width=1.8\textwidth,
			mark size=2.5pt,
			line width=0.75pt,
			xmin=0,xmax=9,
			ymin=1e-30,ymax=1e+6,
            axis y line*=right,
    axis x line=none, 
			]
   
			\addplot+[mark=square, solid, color=teal] table[x index=0,y index=4] {SSN_h1.txt};

    		 \legend{{Merit value}}
      
		\end{semilogyaxis}
        
	  \end{tikzpicture}
      \caption{$h$-uniform, $p=1$ (nr.~8, $65.792+257$ DOF)}
      \label{fig:SSN_h1}
    \end{subfigure}  
    \hspace{0.3cm} 
  \begin{subfigure}[t]{0.28\textwidth}
    \centering
    \begin{tikzpicture}[scale=0.55]
		\begin{semilogyaxis}[
			width=1.8\textwidth,
			mark size=2.5pt,
			line width=0.75pt,
			xmin=0,xmax=9,
			ymin=1e-3,ymax=1e+1,
            axis y line*=left,
			legend style={at={(0,0)},anchor=south west},
            legend style={fill=none}
			]
   
			\addplot+[mark=+, solid, color=blue] table[x index=0,y index=1] {SSN_h2.txt};
            \addplot+[mark=o, color=red] table[x index=0,y index=2] {SSN_h2.txt};
    		 \legend{{Error est.},{Error}}

		\end{semilogyaxis}

        \begin{semilogyaxis}[
			width=1.8\textwidth,
			mark size=2.5pt,
			line width=0.75pt,
			xmin=0,xmax=9,
			ymin=1e-30,ymax=1e+6,
            axis y line*=right,
    axis x line=none, 
			]
   
			\addplot+[mark=square, solid, color=teal] table[x index=0,y index=4] {SSN_h2.txt};

    		 \legend{{Merit value}}
      
		\end{semilogyaxis}
	  \end{tikzpicture}
      \caption{$h$-uniform, $p=2$ (nr.~7, $65.792+257$ DOF)}
      \label{fig:SSN_h2}
    \end{subfigure} 
     \hspace{0.3cm} 
       \begin{subfigure}[t]{0.28\textwidth}
    \centering
    \begin{tikzpicture}[scale=0.55]
		\begin{semilogyaxis}[
			width=1.8\textwidth,
			mark size=2.5pt,
			line width=0.75pt,
			xmin=0,xmax=13,
			ymin=1e-5,ymax=1e+1,
            axis y line*=left,
			legend style={at={(0,0)},anchor=south west},
            legend style={fill=none}
			]
   
			\addplot+[mark=+, solid, color=blue] table[x index=0,y index=1] {SSN_a2.txt};
            \addplot+[mark=o, color=red] table[x index=0,y index=2] {SSN_a2.txt};
    		 \legend{{Error est.},{Error}}

		\end{semilogyaxis}

        \begin{semilogyaxis}[
			width=1.8\textwidth,
			mark size=2.5pt,
			line width=0.75pt,
			xmin=0,xmax=13,
			ymin=1e-30,ymax=1e+6,
            axis y line*=right,
    axis x line=none, 
			]
   
			\addplot+[mark=square, solid, color=teal] table[x index=0,y index=4] {SSN_a2.txt};

    		 \legend{{Merit value}}
      
		\end{semilogyaxis}
	  \end{tikzpicture}
      \caption{$h$-adaptive, $p=2$ (nr.~25, $82.432+909$ DOF) }
      \label{fig:SSN_a2}
    \end{subfigure}

  \begin{subfigure}[t]{0.28\textwidth}
    \centering
    \begin{tikzpicture}[scale=0.55]
		\begin{semilogyaxis}[
			width=1.8\textwidth,
			mark size=2.5pt,
			line width=0.75pt,
			xmin=0,xmax=17,
			ymin=1e-7,ymax=1e+1,
            axis y line*=left,
			legend style={at={(0,0)},anchor=south west},
            legend style={fill=none}
			]
   
			\addplot+[mark=+, solid, color=blue] table[x index=0,y index=1] {SSN_a3.txt};
            \addplot+[mark=o, color=red] table[x index=0,y index=2] {SSN_a3.txt};
    		 \legend{{Error est.},{Error}}

		\end{semilogyaxis}

        \begin{semilogyaxis}[
			width=1.8\textwidth,
			mark size=2.5pt,
			line width=0.75pt,
			xmin=0,xmax=17,
			ymin=1e-30,ymax=1e+6,
            axis y line*=right,
    axis x line=none, 
			]
   
			\addplot+[mark=square, solid, color=teal] table[x index=0,y index=4] {SSN_a3.txt};

    		 \legend{{Merit value}}
      
		\end{semilogyaxis}
	  \end{tikzpicture}
      \caption{$h$-adaptive, $p=3$ (nr.~30, $85.104+949$ DOF)}
      \label{fig:SSN_a3}
    \end{subfigure}  
    \hspace{0.3cm} 
  \begin{subfigure}[t]{0.28\textwidth}
    \centering
    \begin{tikzpicture}[scale=0.55]
		\begin{semilogyaxis}[
			width=1.8\textwidth,
			mark size=2.5pt,
			line width=0.75pt,
			xmin=0,xmax=19,
			ymin=1e-7,ymax=1e+1,
            axis y line*=left,
			legend style={at={(0,0)},anchor=south west},
            legend style={fill=none}
			]
   
			\addplot+[mark=+, solid, color=blue] table[x index=0,y index=1] {SSN_hp.txt};
            \addplot+[mark=o, color=red] table[x index=0,y index=2] {SSN_hp.txt};
    		 \legend{{Error est.},{Error}}

		\end{semilogyaxis}

        \begin{semilogyaxis}[
			width=1.8\textwidth,
			mark size=2.5pt,
			line width=0.75pt,
			xmin=0,xmax=19,
			ymin=1e-30,ymax=1e+6,
            axis y line*=right,
    axis x line=none, 
			]
   
			\addplot+[mark=square, solid, color=teal] table[x index=0,y index=4] {SSN_hp.txt};

    		 \legend{{Merit value}}
      
		\end{semilogyaxis}
	  \end{tikzpicture}
      \caption{$hp$-adaptive (nr.~40, $12.673+617$ DOF)}
      \label{fig:SSN_hp}
    \end{subfigure} 
     \hspace{0.3cm} 
       \begin{subfigure}[t]{0.28\textwidth}
    \centering
    \begin{tikzpicture}[scale=0.55]
		\begin{axis}[
			width=1.8\textwidth,
			mark size=2.5pt,
			line width=0.75pt,
			xmin=0,xmax=19,
			ymin=0,ymax=3.5,
			legend style={at={(0,1)},anchor=north west},
            legend style={fill=none}
			]
   
			\addplot+[mark=x, solid, color=blue] table[x index=0,y index=3] {SSN_h1.txt};
            \addplot+[mark=square, color=teal] table[x index=0,y index=3] {SSN_h2.txt};
            \addplot+[mark=+, color=red] table[x index=0,y index=3] {SSN_a2.txt};
            
            \addplot+[mark=o,solid, color=orange] table[x index=0,y index=3] {SSN_a3.txt};
            \addplot+[mark=diamond, solid, color=cyan] table[x index=0,y index=3] {SSN_hp.txt};


            \legend{{h1},{h2},{a2},{a3},{hp}}
      
		\end{axis}
	  \end{tikzpicture}
      \caption{Error estimator divided by error}
      \label{fig:SSN_effiConst}
    \end{subfigure}

    \caption{Error estimator and error of the SSN iterates for different but fixed meshes (idealized frictional problem).}
  \label{fig:fric:SSN}
    
\end{figure}


\subsection{A Mosolov Model Problem} \label{sec:Num:Mosolov}
We reconsider the numerical example from \cite{Banz2022}. That is $\Omega =B_1(0)$, $f \equiv 1$ and $\rho \equiv 0.3$ with the exact solution
\begin{align*}
 u(r,\phi)= \begin{cases}
             \frac{(1-2\rho)^2}{4},  \  & 0 \leq r \leq 2\rho\\
             \frac{(1-2\rho)^2-(r-2\rho)^2}{4},  \  & 2\rho \leq r \leq 1
            \end{cases} \quad \text{and} \quad 
 \lambda(r,\phi) = \begin{cases}
             -\frac{r^2}{4} - \rho(\rho-1),  \  & 0 \leq r \leq 2\rho\\
             -\rho(r-1),  \  & 2\rho \leq r \leq 1
            \end{cases}            
\end{align*}
given in polar coordinates, see \cite[Fig.~1]{Banz2022} for a visualization. We emphasize that $u$, $\lambda \in H^{1+3/2 - \varepsilon}(\Omega)$ for some arbitrarily small $\varepsilon>0$ with a singularity at the free boundary, which is a circle of radius $2\rho$ and centre zero. Moreover, $|\nabla \lambda| = \min\{r/2,\rho\} \leq \rho$ and, thus, the a posteriori error estimator is even locally efficient, see Theorem~\ref{thm:Mosolov:Aposteriori}.
            
The finite element solution $u_{hp}$ is computed by minimizing the non-smooth functional $\bm{J}$, see \eqref{eq:Mosolov:Energyfunctional}, over the finite element space $V_{hp}$ with a bundle Newton method that stops if the predicted reduction of the energy value is less than $10^{-20}$. The quantity $\tilde{\lambda}_{hp} \in V_{hp}$ is obtained by solving \eqref{eq:Mosolov:kappa_postprocessing} in a post-processing step.
To avoid (significant) domain approximation we use $F_Q \in [\mathbb{P}_6(\widehat{Q})]^2$  (instead of a bilinear or affine mapping) for those elements which have an edge that is a section of the boundary (i.e.~we allow elements with curved boundaries). 

Figure~\ref{fig:Mosolov:Error} shows the reduction of the total discretization error $\sqrt{\|u-u_{hp}\|_{1,\Omega}^2 + \|\lambda-\tilde{\lambda}_{hp}\|_{1,\Omega}^2}$ when increasing the degrees of freedom for seven different discretization strategies. Namely, the uniform $h$-versions with polynomial degree $p=1,2$ and $3$; the uniform $p$-version with $80$ elements; $h$-adaptive schemes with $p=2$ and $3$; as well as an $hp$-adaptive scheme. We observe optimal order of convergence for the uniform $h$-version with $p=1$ and reduced order of convergence that fits to the global Sobolev regularity of the solution pair $(u,\lambda)$ for the other three uniform schemes. The $h$-adaptive scheme with $p=2$ recovers optimal order of convergence of one with respect to the degrees of freedom, but the $h$-adaptive scheme with $p=3$ and also the $hp$-adaptive scheme only exhibit a convergence rate of $1.25$. As discussed and verified in \cite{Banz2022} this is not a failure of the a posteriori error estimator; using the exact error as error estimator yields the same behaviour.  The reason for this likely lies in the combination of an isotropic refinement strategy and the singularity behaviour of the solution at the curved free boundary.  

Here, our main focus is on the efficiency index of the error estimator, i.e.~error estimator $\eta$ divided by the total error $\sqrt{\|u-u_{hp}\|_{1,\Omega}^2 + \|\lambda-\tilde{\lambda}_{hp}\|_{1,\Omega}^2}$. The efficiency index for the seven discretization schemes under consideration seems to be constant, see Figure~\ref{fig:Mosolov:Effi}, and, thus, better than guaranteed by the theory, cf.~Theorem~\ref{thm:Mosolov:Aposteriori}. Indeed, the efficiency index tends to $8$ for the lowest order uniform $h$-version and to some value between 0.9 and 1.7 for the other higher order schemes, see the zoomed in figure within Figure~\ref{fig:Mosolov:Effi}. It is remarkable that we do not observe a $p$-dependency of the efficiency index despite using the classical residual error estimator $\hat{\eta}$ within the total error estimator $\eta$. The reason for that is, that in this example the contribution given by $\hat{\eta}$ is of much higher order (except for the lowest order uniform $h$-version where it is the dominant contribution explaining an efficiency index of 8). Thus, its usual $p$-dependency cannot become a dominant quantity. 
We refer to \cite{Banz2022} for a discussion how the individual contributions of the error estimator $\eta$ behave and for figures of the adaptively generated meshes including polynomial degree distribution.

\begin{figure}[tb]
  \centering\hspace{-1em}
  \subfloat[$\sqrt{\|u-u_{hp}\|_{1,\Omega}^2 + \|\lambda-\tilde{\lambda}_{hp}\|_{1,\Omega}^2}$]{ \label{fig:Mosolov:Error}
	\begin{tikzpicture}[scale=0.82]
		\begin{loglogaxis}[
			width=0.6\textwidth,
			mark size=3pt,
			line width=0.75pt,
			xmin=1,xmax=3e6,
			ymin=1e-6,ymax=3e-1,
			legend style={at={(0,0)},anchor=south west},
            legend style={fill=none}
			]
			
			\addplot+[mark=o, color=blue] table[x index=0,y index=1] {h1.txt};
 			\addplot+[mark=x, color=teal] table[x index=0,y index=1] {h2.txt};
			\addplot+[mark=triangle, color=red] table[x index=0,y index=1] {h3.txt};
			\addplot+[mark=+, color=darkbrown] table[x index=0,y index=1] {p.txt};

			\addplot+[mark=square, color=orange] table[x index=0,y index=1] {a2_vec.txt};
	          \addplot+[mark=star, color=cyan, solid] table[x index=0,y index=1] {a3_vec.txt};
	        \addplot+[mark=diamond, color=purple, solid] table[x index=0,y index=1] {hp_global_vec.txt};
			
			\draw (5e4,3e-4) -- (5e5,3e-4);
			\draw (5e5,3e-4) -- (5e5,3e-4*0.177827941003892);
			\draw (5e4,3e-4) -- (5e5,3e-4*0.177827941003892);
			\node at (1.1e6,1.3e-4) {$0.75$};
			
			\draw (8e3,5e-5*0.056234132519035) -- (8e4,5e-5*0.056234132519035);
 			\draw (8e3,5e-5) -- (8e3,5e-5*0.056234132519035);
 			\draw (8e3,5e-5) -- (8e4,5e-5*0.056234132519035);
			\node at (3.5e3,1e-5) {$1.25$};
            
			\draw (5e4,5e-3) -- (5e5,5e-3);
			\draw (5e5,5e-3) -- (5e5,5e-3*0.316227766016838);
			\draw (5e4,5e-3) -- (5e5,5e-3*0.316227766016838);
			\node at (9e5,2.8e-3) {$0.5$};

			\legend{{$h$-unif.~$p=1$},{$h$-unif.~$p=2$},{$h$-unif.~$p=3$},{$p$-unif.~$|\mathcal{T}_h|=80$},{$h$-adap.~$p=2$},{$h$-adap.~$p=3$},{$hp$-adap.}}
            
		\end{loglogaxis}
	\end{tikzpicture}}    
    \hspace{0.4cm}  \subfloat[Error estimator $\eta$ divided by the error $\sqrt{\|u-u_{hp}\|_{1,\Omega}^2 + \|\lambda-\tilde{\lambda}_{hp}\|_{1,\Omega}^2}$]{ \label{fig:Mosolov:Effi}
	\begin{tikzpicture}[scale=0.82]
		\begin{semilogxaxis}[
		      width=0.6\textwidth,
			mark size=3pt,
			line width=0.75pt,
			xmin=10,xmax=3e6,
			ymin=0,ymax=10,
			legend style={at={(1,1)},anchor=north east},
            legend style={fill=none}
			]

            \addplot+[mark=o, color=blue] table[x index=0,y expr=(\thisrowno{7}/\thisrowno{1})] {h1.txt};
 			\addplot+[mark=x, color=teal] table[x index=0,y expr=(\thisrowno{7}/\thisrowno{1})] {h2.txt};
			\addplot+[mark=triangle, color=red] table[x index=0,y expr=(\thisrowno{7}/\thisrowno{1})] {h3.txt};
			\addplot+[mark=+, color=darkbrown] table[x index=0,y expr=(\thisrowno{7}/\thisrowno{1})] {p.txt};

			\addplot+[mark=square, color=orange] table[x index=0,y expr=(\thisrowno{7}/\thisrowno{1})] {a2_vec.txt};
	          \addplot+[mark=star, color=cyan, solid] table[x index=0,y expr=(\thisrowno{7}/\thisrowno{1})] {a3_vec.txt};
	        \addplot+[mark=diamond, color=purple, solid] table[x index=0,y expr=(\thisrowno{7}/\thisrowno{1})] {hp_global_vec.txt};

		\end{semilogxaxis}
	\end{tikzpicture}
\put(-130,44){
    \includegraphics[trim = 25mm 130mm 110mm 105mm, clip,width=0.25\textwidth, keepaspectratio]{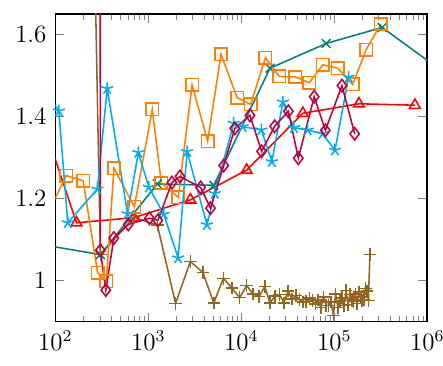}
    }
    
    } 
	
	\caption{Approximation error and efficiency index vs.~degrees of freedom (Mosolov problem). The legend is the same for both figures.} \label{fig:error1}
\end{figure}

\section{Appendix} \label{app:Gemischt}

\begin{lemma}\label{lem: j properties}
    The functional $j: V \to\R$ given by
    \begin{equation*}
        j(v) := \bigl(\rho\,,|\mathfrak{L}(\gamma(v))|\bigr)_{0,G}
    \end{equation*}     
    is convex, continuous and proper.
\end{lemma}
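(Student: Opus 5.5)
The plan is to verify the three properties directly from the definition, using only that $\rho\in L^\infty_+(G)$, that $\L\circ\gamma:V\to[L^2(G)]^k$ is linear and bounded by \eqref{eq: Continuity of gamma and L}, and that the Euclidean norm $|\cdot|$ on $\R^k$ is a norm.

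\emph{Well-definedness and properness.} For $v\in V$ we have $\L(\gamma(v))\in[L^2(G)]^k$, hence $|\L(\gamma(v))|\in L^2(G)$. Since $G$ is bounded, $\rho\in L^\infty(G)\subset L^2(G)$. Therefore the Cauchy--Schwarz inequality gives
\[
0\le j(v)\le \norm{\rho}_{0,G}\norm{\L(\gamma(v))}_{0,G}\le C_LC_\gamma\norm{\rho}_{0,G}\norm{v}_V<\infty .
\]
So $j$ is real valued everywhere, never attains $-\infty$, and $j(0)=0$. Hence $j$ is proper.

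\emph{Convexity.} Let $v,w\in V$ and $t\in[0,1]$. By linearity of $\L\circ\gamma$ and the triangle inequality for $|\cdot|$ we have, pointwise a.e.\ on $G$,
\[
|\L(\gamma(tv+(1-t)w))|\le t|\L(\gamma(v))|+(1-t)|\L(\gamma(w))|.
\]
Multiplying by $\rho>0$ and integrating over $G$ yields $j(tv+(1-t)w)\le tj(v)+(1-t)j(w)$. The same argument in fact shows that $j$ is positively homogeneous and subadditive, i.e.\ $j$ is a seminorm, which is the form used later in the paper, e.g.\ $j(\tilde u)\le j(\tilde u-u)+j(u)$.

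\emph{Continuity.} From the reverse triangle inequality $\bigl||a|-|b|\bigr|\le|a-b|$ in $\R^k$ together with Cauchy--Schwarz we obtain
\[
|j(v)-j(w)|\le\bigl(\rho,|\L(\gamma(v-w))|\bigr)_{0,G}\le C_LC_\gamma\norm{\rho}_{0,G}\norm{v-w}_V .
\]
Thus $j$ is even Lipschitz continuous.

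None of these steps is a real obstacle. The only point needing care is that $\norm{\rho}_{0,G}<\infty$, which relies on the boundedness of $G$, and this is guaranteed because $\Omega$ is bounded.
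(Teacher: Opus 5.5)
Your proof is correct and follows the same route as the paper: convexity comes from the linearity of $\L\circ\gamma$ and the triangle inequality for $|\cdot|$, and properness from $j\ge 0$ (so $j>-\infty$) together with $j(0)=0$. For continuity the paper only cites continuity of $\L$, $\gamma$, $|\cdot|$ and the inner product; your Lipschitz bound $|j(v)-j(w)|\le C_LC_\gamma\norm{\rho}_{0,G}\norm{v-w}_V$ makes this quantitative, and it uses $\norm{\rho}_{0,G}<\infty$, which the paper also assumes elsewhere.
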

\begin{proof}
    The linearity of $\L$ and $\gamma$, and the triangle inequality imply the convexity of $j(\cdot)$. Since $\L$, $\gamma$, $|\cdot|$ and the inner product are continuous the functional $j(\cdot)$ is also continuous. The functional $j(\cdot)$ is proper, because $j(v)>-\infty$ for all $v\in V$ and $j(0)=0<\infty$.
\end{proof}
\begin{lemma}\label{lem: Lambda properties}
    The set $\Lambda$ defined in \eqref{eq:DefLambda} is non-empty, closed and convex.
\end{lemma}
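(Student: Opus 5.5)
The three properties follow directly from the definition \eqref{eq:DefLambda}, read with $w$ in place of the typo $\mu$, i.e.\ $\Lambda=\{w\in W:\langle \B w,\gamma(v)\rangle_W\le j(v)\ \forall v\in V\}$. The idea is to write $\Lambda$ as an intersection of closed half-spaces indexed by $v\in V$.

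\emph{Non-emptiness.} We take $w=0$. Then $\langle \B 0,\gamma(v)\rangle_W=0\le j(v)$ for all $v\in V$, since $j(v)=(\rho,|\L(\gamma(v))|)_{0,G}\ge 0$ because $\rho>0$. Hence $0\in\Lambda$.

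\emph{Convexity.} Let $w_1,w_2\in\Lambda$ and $t\in[0,1]$. By linearity of $\B$ and of the duality pairing,
\[
\langle \B(tw_1+(1-t)w_2),\gamma(v)\rangle_W = t\langle \B w_1,\gamma(v)\rangle_W+(1-t)\langle \B w_2,\gamma(v)\rangle_W \le t\,j(v)+(1-t)\,j(v)=j(v).
\]
So $tw_1+(1-t)w_2\in\Lambda$.

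\emph{Closedness.} Fix $v\in V$. The map $w\mapsto\langle \B w,\gamma(v)\rangle_W=(\L(w),\L(\gamma(v)))_{0,G}$ is a continuous linear functional on $W$. Indeed, by \eqref{eq: B cont and ell} and \eqref{eq: Continuity of gamma and L} it is bounded by $C_B C_\gamma\norm{v}_V\norm{w}_W$. Therefore the set $H_v:=\{w\in W:\langle \B w,\gamma(v)\rangle_W\le j(v)\}$ is the preimage of the closed set $(-\infty,j(v)]$ under a continuous map, and so it is closed. It is also convex, being a half-space. Since $\Lambda=\bigcap_{v\in V}H_v$, the set $\Lambda$ is closed as an arbitrary intersection of closed sets. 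This also gives convexity a second time.

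There is no real obstacle here. The only point needing care is that $j(v)$ is finite and nonnegative, which Lemma~\ref{lem: j properties} and $\rho\in L^\infty_+(G)$ guarantee; this is what makes $0$ admissible and each $H_v$ a genuine closed half-space.
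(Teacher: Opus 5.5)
Your proof is correct and follows the paper's argument: $0\in\Lambda$, convexity from the linearity of $\B$, and closedness from the continuity of $w\mapsto\langle \B w,\gamma(v)\rangle_W$ for each fixed $v\in V$. The only cosmetic difference is that you obtain closedness by writing $\Lambda=\bigcap_{v\in V}H_v$ as an intersection of closed sets, whereas the paper passes to the limit along a convergent sequence $w_n\to w$; the two arguments are equivalent.
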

\begin{proof}
    It is clear that $0\in\Lambda$ and, thus, $\Lambda\neq\emptyset$. Furthermore, let $\zeta\in[0,1]$ and $w,z\in\Lambda$. Then it holds by \eqref{eq: Definition of B} and the linearity of $\L$ that
    \begin{equation*}
    \aligned
        \langle \B (\zeta w+(1-\zeta)z), \gamma(v)\rangle_W&=\left(\L(\zeta w+(1-\zeta)z),\L(\gamma(v))\right)_{0,G}\\
        &=\zeta\;(\L(w),\L(\gamma(v)))_{0,G}+(1-\zeta)(\L(z),\L(\gamma(v)))_{0,G}\\
        &\leq j(v),
    \endaligned
    \end{equation*}
    which means that $\Lambda$ is convex. Next, let $(w_n)_{n\geq1}\subset \Lambda$ be a sequence converging to $w\in W$. By the continuity of the $\L$ and the inner product it holds that
    \begin{equation*}
    \aligned
        \langle \B w, \gamma(v)\rangle_W &= (\lim\limits_{n\to\infty}\L(w_n),\L(\gamma(v))_{0,G}=\lim\limits_{n\to\infty}(\L(w_n),\L(\gamma(v))_{0,G}\leq j(v),
    \endaligned
    \end{equation*}
    which implies that $w\in \Lambda$. Therefore, $\Lambda$ is closed.
\end{proof}


\begin{proof}[Proof of Theorem~\ref{thm:inf-sup}]
 Let $w\in W$ be arbitrary.
    As $\gamma:V\to \gamma(V)$ is surjective and $\gamma(V)$ is a Banach space with $\norm{\cdot}_{\gamma(V)}$, it follows from \cite[Lem.~2.3]{Burg2015} that there exists a subspace $V_\gamma\subset V$ with $\gamma(V_\gamma)=\gamma(V)$ and a constant $c_\gamma>0$ such that
        \begin{equation*}
            \norm{v}_V\leq c_\gamma\,\norm{\gamma(v)}_{\gamma(V)}
        \end{equation*}
        for all $v\in V_\gamma$.
        Additionally, as $\gamma(V)$ is dense in $W$ it holds that $\norm{\B w}_{W^*}=\norm{\B w|_{\gamma(V)}}_{\gamma(V)^*}$, see e.g.~\cite[Prop.~5.4]{Stein2011}.
        Therefore, we obtain 
        \begin{equation*}
            \aligned
            \norm{\B w}_{W^*}=
            \norm{\B w\!|_{\gamma(\!V\!)}}_{\gamma\!(\!V\!)^*}&\!=\!\sup\limits_{z\in \gamma(\!V\!)\backslash\{0\}}\frac{\langle\B w,z\rangle_W}{\norm{z}_{\gamma(\!V\!)}}\\
             &\!=\! \sup\limits_{v_\gamma\in V_\gamma\backslash\{0\}}\frac{\langle\B w,\gamma(v_\gamma)\rangle_W}{\norm{\gamma(\!v_\gamma\!)}_{\gamma(V)}}\\
             &\leq c_\gamma\!\sup\limits_{v_\gamma\in V_\gamma\backslash\{0\}}\frac{\langle\B w,\gamma(\!v_\gamma\!)\rangle_W}{\norm{v_\gamma}_{V}}\\
            &\leq c_\gamma\!\sup\limits_{v\in V\backslash\{0\}}\frac{\langle\B w,\gamma(\!v\!)\rangle_W}{\norm{v}_V},
            \endaligned
        \end{equation*}
        that together with $C_b \norm{w}_W \leq  \norm{\B w}_{W^*}$, which follows from \eqref{eq: B cont and ell}, yields the assertion.
\end{proof}


\begin{proof}[Proof of Theorem \ref{Existenz Gemischte Formulierung}] For the proof we use similar arguments as in \cite[Thm.~2.2]{Burg2015}.
    Let $u\in V$ be the solution of the variational inequality \eqref{Variational Inequality}. First, we show that $\tilde{\bm{q}} \in \gamma(V)^*$ given by
        \begin{align*}
            \langle \tilde{\bm{q}}, \gamma(v)\rangle_{\gamma(V)}= \langle\bm{\ell}-\bm{A}u,v\rangle _V
        \end{align*}
    is well-defined. To that end, let $v_0,v_1\in V$ such that $\gamma(v_0)=\gamma(v_1)$.
     As $v_0-v_1+u\in V$ with $\gamma(v_0-v_1+u) = \gamma(u)$ for $u$ the solution of the variational inequality \eqref{Variational Inequality} it holds that
        \begin{align*}
            0&\leq \langle \bm{A}u-\bm{\ell}, v_0-v_1+u-u\rangle_V + j(v_0-v_1+u)-j(u)\\
            &=\langle \bm{A}u-\bm{\ell}, v_0-v_1\rangle_V + j(u) -j(u) \\
            &=\langle \bm{A}u-\bm{\ell}, v_0-v_1\rangle_V.
        \end{align*}
        Conversely, as $v_1-v_0+u\in V$ it follows in the same way that $0\leq\langle \bm{A}u-\bm{\ell}, v_1-v_0\rangle_V.$ This implies that $\langle \bm{A}u-\bm{\ell}, v_0-v_1\rangle_V =0$ and, thus,
        \begin{align*}
            \langle \tilde{\bm{q}}, \gamma(v_0)\rangle_{\gamma(V)}=\langle \bm{\ell}-\bm{A}u,v_0\rangle_V=\langle \bm{\ell}-\bm{A}u,v_1\rangle_V= \langle \tilde{\bm{q}}, \gamma(v_1)\rangle_{\gamma(V)}
        \end{align*}
        for all $v_0,v_1\in V$ with $\gamma(v_0)=\gamma(v_1)$, which gives the well-definedness of $\tilde{\bm{q}}$.\\
        Next, the density of $\gamma(V)$ in $W$ implies the existence of a unique extension $\bm{q}\in W^*$ of $\tilde{\bm{q}}$, see e.g.~\cite[Prop.~5.4]{Stein2011}.
        Consequently, it holds that
        \begin{align*}
            \langle \bm{q}, \gamma(v)\rangle_W = \langle \tilde{\bm{q}}, \gamma(v)\rangle_{\gamma(V)} = \langle\bm{\ell}-\bm{A}u,v\rangle_V
        \end{align*}
        for all $v\in V$. As the linear operator $\B$ is continuous and elliptic, there       
        there exists a unique $\lambda\in W$ such that $\B \lambda = \bm{q}$, see e.g.~\cite[Thm.~5.10]{Han2013}. Thus, it holds that
        \begin{align*}
            \langle \B \lambda, \gamma(v)\rangle_W = \langle \bm{q}, \gamma(v)\rangle_W = \langle\bm{\ell}-\bm{A}u,v\rangle_V
        \end{align*}
        for all $v\in V$, which is \eqref{Mixed Formulation Gleichung}.
        In order to show that $\lambda\in \Lambda$, let $v\in V$ be arbitrary. It holds by \eqref{Variational Inequality} and the triangle inequality that 
        \begin{align*}
            \langle \B \lambda,\gamma(v)\rangle_W &= \langle \lin-\A u,v+u-u\rangle_V
            \leq j(v+u)-j(u)
            \leq j(v)+j(u)-j(u)
            = j(v)
        \end{align*}
        and, thus, $\lambda\in\Lambda$.

        To show \eqref{eq: conditions on lambda and gamma u}, we set $v=0$ in \eqref{Variational Inequality}, which gives
        \begin{align*}
            \langle \B \lambda,\gamma(u)\rangle_W = \langle \lin -\A u, u\rangle_V \geq j(u),
        \end{align*}
        but as it also holds that $\langle \B \lambda,\gamma(u)\rangle_W\leq j(u)$ as $\lambda\in\Lambda$, we obtain
        \begin{align*}
            j(u) = \langle \B \lambda,\gamma(u)\rangle_W.
        \end{align*}
        To show \eqref{Mixed Formulation Ungleichung}, let $\mu\in\Lambda$. Then we have with \eqref{eq: conditions on lambda and gamma u} that
        \begin{align*}
            \langle \B (\mu-\lambda),\gamma(u)\rangle_W \leq j(u) - \langle \B \lambda,\gamma(u)\rangle_W = j(u)-j(u) = 0.
        \end{align*}
        Consequently, $(u,\lambda)\in V\times \Lambda$ solves the problem \eqref{Mixed Formulation}.

        For the opposite direction let us assume that $(u,{\lambda})$ is the solution of the mixed formulation \eqref{Mixed Formulation}. From $\lambda \in \Lambda$ and \eqref{Mixed Formulation Ungleichung} we obtain as above that \eqref{eq: conditions on lambda and gamma u} is true.     
        Then, as $u-v\in V$ for any $v\in V$, it holds that
        \begin{align*}
            0&= \langle \bm{A}u-\bm{\ell},v-u\rangle_V+\langle\B \lambda,\gamma(v-u)\rangle_W\\
            &= \langle\bm{A}u-\bm{\ell},v-u\rangle_V + \langle\B \lambda,\gamma(v)\rangle_W - j(u)\\
            &\leq  \langle\bm{A}u-\bm{\ell},v-u\rangle_V + j(v) - j(u).
        \end{align*}
        Therefore, $u$ is a solution of the variational inequality \eqref{Variational Inequality}.

    Due to the above proven equivalence of \eqref{Variational Inequality} and \eqref{Mixed Formulation}, the uniqueness of $u$ carries over from \eqref{Variational Inequality}. Let $(u,\lambda_1)$ and $(u,\lambda_2)$ be both solutions of \eqref{Mixed Formulation}. By subtracting the corresponding equalities in \eqref{Mixed Formulation Gleichung} we obtain
        \begin{align*}
            \langle \B(\lambda_1-\lambda_2),\gamma(v)\rangle_W = 0
        \end{align*}
        for all $v\in V$.
        The uniqueness of $\lambda$ now follows with the inf-sup condition \eqref{lem:dense subset norm estimation}.
\end{proof}

\vskip 6mm
\noindent{\bf Acknowledgments}

\noindent   
The authors gratefully acknowledge the support by the Bundesministerium f\"{u}r Frauen, Wissenschaft und Forschung (BMFWF) under the Sparkling Science project SPA 01-080 'MAJA -- Mathematische Algorithmen für Jedermann Analysiert'.

\end{document}